\newcommand{\ci}{C^{\infty}}
\newcommand{\IC}{\mathbb{C}}
\newcommand{\IQ}{\mathbb{Q}}
\newcommand{\uA}{\underline{A}}
\newcommand{\GL}{\mathrm{GL}}
\newcommand{\ZZ}{\mathbb{Z}}
\newcommand{\isomto}{\overset{\sim}{\rightarrow}}
\newcommand{\CC}{\mathcal{C}}
\newcommand{\hdr}{H_{\mathrm{dR}}}
\newcommand{\uo}{\underline{\omega}}
\newcommand{\Ig}{\mathrm{Ig}}
\newcommand{\ord}{\mathrm{ord}}
\newcommand{\muord}{\mu\mbox{-}\ord}
\newcommand{\CO}{\mathcal{O}}
\newcommand{\hdrone}{H^1_{\mathrm{dR}}}
\newcommand{\Fil}{\mathrm{Fil}}
\newcommand{\Image}{\mathrm{Image}}
\newcommand{\Gr}{\mathrm{Gr}}
\newcommand{\dual}{^\vee}
\newcommand{\KS}{\mathrm{KS}}
\newcommand{\ks}{\mathrm{ks}}
\newcommand{\id}{\mathrm{id}}
\newcommand{\cmfield}{F}
\newcommand{\Ocmfield}{\CO_\cmfield}
\newcommand{\Igmu}{\Ig_\mu}
\newcommand{\Igmunm}{(\Igmu)_{n,m}}
\newcommand{\CA}{\mathcal{A}}
\newcommand{\CE}{\mathcal{E}}
\newcommand{\CS}{\mathcal{S}}
\newcommand{\CZ}{\mathcal{Z}}
\newcommand{\Auniv}{\CA_\mu}
\newcommand{\Smu}{\CS_\mu}
\newcommand{\schur}{\mathbb{S}}
\newcommand{\Aoverig}{\left(\Auniv/\Igmu\right)}
\newcommand{\st}{\mathrm{St}}
\newcommand{\Vmu}{V^N}
\newcommand{\cf}{\mathfrak{f}}
\newcommand{\CL}{\mathcal{L}}
\newcommand{\Spec}{\mathrm{Spec}}
\newcommand{\gr}{\mathrm{gr}}
\newcommand{\rk}{\mathrm{rk}}
\newcommand{\Emu}{\CE_\mu}
\newcommand{\Isom}{\mathrm{Isom}}
\newcommand{\can}{\mathrm{can}}
\newcommand{\ellcan}{\ell}
\newcommand{\End}{\mathrm{End}}
\newcommand{\IR}{\mathbb{R}}
\newcommand{\op}{\mathrm{op}}
\newcommand{\tildeellcan}{\tilde{\ell}_{\can}}
\newcommand{\Hom}{\mathrm{Hom}}
\newcommand{\Gm}{\mathbb{G}_m}
\newcommand{\diag}{\mathrm{diag}}
\newcommand{\unitrootold}{U}
\newcommand{\Para}{P}
\newcommand{\para}{\Para_\mu}
\newcommand{\Levi}{J}
\newcommand{\Levii}{J'}
\newcommand{\levi}{\Levi_\mu}
\newcommand{\uni}{U_\mu}
\newcommand{\Uni}{U}
\newcommand{\Borel}{B}
\newcommand{\Borell}{B'}
\newcommand{\borel}{\Borel_\mu}
\newcommand{\Nilp}{N}
\newcommand{\Nilpp}{N'}
\newcommand{\nilp}{\Nilp_\mu}
\newcommand{\bD}{{\mathbb D}}
\newcommand{\Q}{{\mathbb Q}}
\newcommand{\W}{{\mathbb W}}
\newcommand{\Z}{{\mathbb Z}}
\newcommand{\BK}{{\mathbb K}}
\newcommand{\F}{{\mathbb F}}
\newcommand{\cR}{\mathcal{R}}
\newcommand{\bG}{\mathbb{G}}
\newcommand{\uu}{{\underline{u}}}
\newcommand{\M}{{\mathfrak M}}
\newcommand{\FO}{{\mathfrak O}}
\newcommand{\Pmu}{{{\mathcal P}_\mu}}
\newcommand{\Torus}{T}
\newcommand{\Toruss}{T'}
\newcommand{\torus}{T_\mu}
\newcommand{\C}{{\mathbb C}}
\newcommand{\A}{{\mathbb A}}
\newcommand{\X}{{\mathbb X}}
\newcommand{\Sh}{\mathit{Sh}}
\newcommand{\sh}{\mathit{sh}}
\newcommand{\K}{\mathcal{K}}
\newcommand{\CD}{\mathcal{D}}
\newcommand{\co}{\mathfrak{o}}
\newcommand{\T}{\mathcal{T}}
\newcommand{\G}{\mathcal{G}}
\newcommand{\ck}{\kappa(\p)}
\newcommand{\p}{\mathfrak{p}}
\newcommand{\spec}{\mathrm{Spec}}
\newcommand{\cS}{\mathcal{S}}
\newcommand{\bS}{\overline{\mathcal{S}}}
\newcommand{\hasse}{{E_\mu}}
\newcommand{\BT}{Barsotti--Tate }
\newcommand{\ud}{\underline{D}}
\newcommand{\unipo}{{\mathbb U}}
\newcommand{\U}{\underline{U}}
\newcommand{\Dring}{\mathscr{D}}
\newcommand{\reflexfield}{E}
\renewcommand{\MR}[1]{ }
\theoremstyle{plain}
\newtheorem{thm}{Theorem}
\numberwithin{thm}{subsection}
\newtheorem{cor}[thm]{Corollary}
\newtheorem{prop}[thm]{Proposition}
\theoremstyle{definition}
\newtheorem{definition}[thm]{Definition}
\newtheorem{defi}[thm]{Definition}
\newtheorem{defn}[thm]{Definition}
\theoremstyle{remark}
\newtheorem{remark}[thm]{Remark}
\newtheorem{rmk}[thm]{Remark}
\title[$p$-adic families in the $\mu$-ordinary setting]{$p$-adic families of automorphic forms in the $\mu$-ordinary setting}
\date{\today}
\author[E.\  Eischen]{E.\ Eischen$^*$}
\thanks{$^*$Partially supported by NSF Grant DMS-1559609 and NSF CAREER Grant DMS-1751281.}
\author[E.\ Mantovan]{E.\ Mantovan}
\address{E. E. Eischen\\
Department of Mathematics\\
University of Oregon\\
Fenton Hall\\
Eugene, OR 97403-1222\\
USA}
\email{eeischen@uoregon.edu}
\address{E. Mantovan\\
 Department of Mathematics\\
Caltech\\
Pasadena, CA 91125\\
 USA}
 \email{mantovan@caltech.edu}
\begin{document}

\bibliographystyle{amsalpha}  

\maketitle
\vspace{-0.25in}
\begin{abstract}
We develop a theory of $p$-adic automorphic forms on unitary groups that allows $p$-adic interpolation in families and holds for all primes $p$ that do not ramify in the reflex field $E$ of the associated unitary Shimura variety. 
If the ordinary locus is nonempty (a condition only met if $p$ splits completely in $E$), we recover Hida's theory of $p$-adic automorphic forms, which is defined over the ordinary locus.  More generally, we work over the $\mu$-ordinary locus, which is open and dense.  

By eliminating the splitting condition on $p$, our framework should allow many results employing Hida's theory to extend to infinitely many more primes.
We also provide a construction of $p$-adic families of automorphic forms that uses differential operators constructed in the paper.  Our approach is to adapt the methods of Hida and Katz to the more general $\mu$-ordinary setting, while also building on papers of each author.  Along the way, we encounter some unexpected challenges and subtleties that do not arise in the ordinary setting.   
 \end{abstract}

\setcounter{tocdepth}{1}

\section{Introduction}\label{introduction-section}
The $p$-adic theory of modular forms plays a powerful role in number theory.  Its reach includes the proof of Fermat's Last Theorem, proofs of instances of the main conjecture of Iwasawa theory, a realization of the Witten genus in homotopy theory, and constructions of $p$-adic $L$-functions.   Geometric developments continue to expand the impact of the $p$-adic theory, for example in settings employing automorphic forms on unitary groups.

Shortly after J.-P.\ Serre defined $p$-adic modular forms as $p$-adic limits of Fourier expansions of classical modular forms, N.\ Katz gave a geometric reformulation \cite{serre, ka2}.  H.\ Hida later extended Katz's geometric framework to $p$-adically interpolate automorphic forms on many reductive groups, including unitary groups \cite{Hida, hi05}.  This geometric approach realizes $p$-adic automorphic forms inside a vector bundle over (a cover of) the {\it ordinary locus} of a Shimura variety.

The present paper is the first in a projected multi-paper project to extend Hida's theory to the {\it $\mu$-ordinary locus} of each unitary Shimura variety $\mathcal{S}$, for all rational primes $p$ that do not ramify in the reflex field $E$ of $\mathcal{S}$.  When $p$ does not split completely in $E$, the ordinary locus is empty, in which case Hida's theory concerns functions on the empty set.  On the other hand, for $p$ that does not ramify in $E$, the $\mu$-ordinary locus is an open, dense stratum, which is the same as the ordinary locus when $p$ splits completely, as shown by T.\ Wedhorn in \cite{wedhorn}.  Our approach holds over the $\mu$-ordinary locus for $p$ unramified and specializes to Hida's theory when the ordinary locus is nonempty.

As Hida's theory has played a substantial role in various applications, it is natural to try to extend its impact still further by using the $\mu$-ordinary locus to remove the splitting condition on the prime $p$.   In turn, this should facilitate a link with Hida's $P$-ordinary automorphic forms, with $P$ a certain parabolic, introduced in \cite{HidaAsian}.  It should also be possible to adapt existing constructions of $p$-adic $L$-functions (e.g., \cite{SkUr, MLH, EW, EFMV, hsieh-jams, EHLS}) to the $\mu$-ordinary setting.

Proofs employing $p$-adic automorphic forms typically require not only a definition facilitating interpolation in families but also connections with the theory of $\IC$-valued automorphic forms.  For example, most applications of Hida's, Katz's, and Serre's theories employ Hecke operators or $p$-adic analogues of the Maass--Shimura differential operators.  Thus, beyond defining $p$-adic automorphic forms over (a cover of) the $\mu$-ordinary locus (an adaptation of \cite[Chapter 8]{Hida} made possible by geometric developments of Wedhorn, B.\ Moonen, W.\ Goldring, M.-H.\ Nicole, and the second named author in \cite{wedhorn, moonen, GN, mantovan1}), important goals include achieving $\mu$-ordinary analogues of the following results already established for Hida's setting:
\begin{enumerate}
\item{Realize classical automorphic forms in the space of $p$-adic automorphic forms.}\label{classical-item}
\item{Construct $p$-adic differential operators analogous to the $\ci$ Maass--Shimura operators, and explicitly describe their action on local expansions of $p$-adic automorphic forms at certain points (e.g., Serre--Tate expansions, analogues of $q$-expansions).
}\label{differential-item}
\item{Study the action of Hecke operators on $p$-adic automorphic forms and cut out a space of ($\mu$-)ordinary automorphic forms via (a $\mu$-ordinary analogue of) Hida's ordinary projector, in order to develop a theory of ($\mu$-)ordinary forms and families; and relate these families to systems of Hecke eigenvalues, as well as to certain holomorphic automorphic forms.}\label{action-item}
\item{Develop a notion of overconvergence in the $\mu$-ordinary setting, and construct eigenvarieties parametrizing overconvergent families.}\label{over-item}
\item{Determine relationships between the classical forms in Item \eqref{classical-item} and overconvergence from Item \eqref{over-item}, by studying the action of the Hecke operators from Item \eqref{action-item} and the action of the differential operators from Item \eqref{differential-item}.}\label{combo-item}
\end{enumerate}
Some constructions turn out to be more delicate and involved than one might expect from the ordinary case alone.  Consequently, this has become an active research area.

In the present paper, after defining $p$-adic automorphic forms over the $\mu$-ordinary locus (in analogue with Hida's construction over the ordinary locus), we accomplish Item \eqref{classical-item} under certain conditions on the weights of the classical forms, thus extending \cite[Chapter 8]{Hida} from the ordinary setting.  These conditions are forced by the underlying representation theory in the $\mu$-ordinary case, as seen in Section \ref{realization-section}.  We can obtain stronger results about embeddings locally than globally, which suffice for many applications.

The present paper also constructs the differential operators from Item \eqref{differential-item}, thus extending the constructions from \cite{E09, EDiffOps, kaCM, EFMV} to the $\mu$-ordinary setting.   We also describe the action of the differential operators on Serre--Tate expansions of $p$-adic automorphic forms, but only when the weights meet certain conditions (a condition we suspect can be considerably lessened with additional nontrivial, technical work concerning Lubin--Tate group laws).  Our explicit description enables us to establish congruences between $p$-adic automorphic forms over the $\mu$-ordinary locus and, as a consequence, construct $p$-adic families of automorphic forms (currently only under conditions on the weights), like in, e.g., \cite{kaCM, hida-differential, panch, CoPa, apptoSHL, apptoSHLvv, emeasurenondefinite, EFMV}.  E.\ de Shalit and E.\ Goren have studied similar differential operators for unitary groups of signature $(2,1)$ for quadratic imaginary fields \cite{dSG}.

In the sequel, we plan to build on the framework developed in the present paper to study aspects of \eqref{action-item} and \eqref{combo-item} in the case of families, including non-classical weights and the relationship with systems of Hecke eigenvalues.  One anticipated application will be a construction of new $p$-adic $L$-functions for unitary Shimura varieties with empty ordinary locus.  This work will also build on Hida's work on $P$-ordinary automorphic forms \cite{HidaAsian}.  

Some of \eqref{over-item} and \eqref{combo-item} has been achieved by S.\ Bijakowski, V.\ Pilloni, and B.\ Stroh in \cite{BPS, bijakowski} for classical weights (and not for families), via a different approach from the one in the present paper (and not involving Igusa towers, a necessary ingredient for interpolation in families in our project).  After proving the existence of canonical subgroups on a neighborhood of the $\mu$-ordinary locus of the associated Shimura variety and then adapting the analytic continuation methods of K.\ Buzzard and P.\ Kassaei \cite{buzz, kass1, kass2}, Bijakowski, Pilloni, and Stroh develop a notion of overconvergence naturally extending the one from the ordinary setting.  Then they prove overconvergent forms of small slope are classical, extending Coleman's and Hida's classicality results \cite{hida86, coleman1, coleman2} that followed F.\ Gouv\^ ea and B.\ Mazur's conjectures \cite{gouvea-mazur}.

In the few weeks after the first version of this paper appeared on the arXiv, three additional papers addressing some of the above goals have been completed.  In \cite{hernandez1}, V.\ Hernandez constructs eigenvarieties when the signature is $(2, 1)$, which he plans to extend to other signatures.  In \cite{brasca-rosso}, R.\ Brasca and G.\ Rosso extend Hida theory to $\Lambda$-adic cuspidal $\mu$-ordinary forms (which appear to be closely related to Hida's $P$-ordinary forms), for $\Lambda$ a twisted Iwasawa algebra.  Extending \cite{dSG}, de Shalit and Goren recently completed a paper constructing differential operators over a $\mu$-ordinary Igusa tower for unitary groups of arbitrary signature for quadratic imaginary extensions of $\IQ$ \cite{DSG2}.  While their approach differs from ours (and, unlike ours, does not employ a replacement for the unit root submodule that appears to play a key role in the constructions in \cite{kaCM, EFMV, EDiffOps}), they expect their differential operators to coincide with ours (at least for quadratic imaginary extension of $\IQ$, the setting in which they work, whereas we also consider arbitrary CM fields).  They also discuss analytic continuation beyond the $\mu$-ordinary strata, as well as the action on certain Fourier--Jacobi expansions.

\subsection{Main results and organization of the paper}\label{mr-section}

Section \ref{preliminaries-section} summarizes key information about unitary Shimura varieties, the $\mu$-ordinary locus,  automorphic forms, and representation theory.  The seemingly bland observations in Section \ref{littlewood-section} about the possibility that representations occurring in the restrictions of irreducible representations might have multiplicity greater than $1$ play a major role in Section \ref{congruences-section}.  The root of the issue is that the trivialization of the sheaf of differentials of each ordinary abelian variety that plays a key role in the ordinary setting is replaced in the $\mu$-ordinary setting by a trivialization of the associated graded module of the sheaf of differentials of each $\mu$-ordinary abelian variety (with the associated graded module coming from the slope filtration at each $\mu$-ordinary point), or equivalently, the action of a general linear group on the ordinary Igusa tower gets replaced by the action of its parabolic subgroup preserving the slope filtration.  For notational convenience, we exclude some instances of slope $1/2$ in this paper.  We expect no mathematical problems handling that slope, but it would involve still more notation, since it involves working with unitary groups that are not isomorphic to copies of general linear groups.

In Section \ref{tower-section}, we construct the {\it $\mu$-ordinary Igusa tower} as a profinite \'etale cover of the formal $\mu$-ordinary locus.  While Hida's ordinary Igusa tower employs the structure of the $p$-torsion of the universal abelian variety, our analogue in the $\mu$-ordinary case uses the structure of the {\it associated graded module coming from the slope filtration} on the $p$-torsion.  
Section \ref{tower-section} concludes with a study of local expansions at points of the $\mu$-ordinary Igusa tower, which is necessary for proving the congruences in Theorem \ref{intro-thm1} below.  

Section \ref{congruences-section} introduces $p$-adic automorphic forms over the $\mu$-ordinary locus and Igusa tower.  When the ordinary locus is nonempty, Hida's definitions and ours coincide.  The introduction of filtrations in the $\mu$-ordinary case complicates some aspects of the theory.  One of the key features of Hida's theory, which makes many applications possible, is the embedding of the spaces of classical automorphic forms into the line bundle of $p$-adic automorphic forms over the ordinary Igusa tower.  Instead, there is a realization of appropriate subspaces -- but, if the ordinary locus is empty, not the whole space -- of automorphic forms over the $\mu$-ordinary locus in the space $V$ of $p$-adic automorphic forms over the Igusa tower, as explained in Section \ref{realization-section}.  Section \ref{congruences-section} concludes with results about congruences:
\begin{prop}[Rough form of Proposition \ref{prop-st} and Corollary \ref{necc}]\label{intro-thm1}
In analogue with the ordinary $q$-expansion principle, $p$-adic automorphic forms in the $\mu$-ordinary setting are determined by their Serre--Tate expansions.  For forms $f_1, f_2$ of weights meeting appropriate conditions, $f_1\equiv f_2\mod p^n$ if and only if (sufficiently many of) the Serre--Tate expansions of their images in $V$ are congruent$\mod p^n$.  \end{prop}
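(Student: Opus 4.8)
The plan is to separate two ingredients: a geometric expansion principle for global functions on the $\mu$-ordinary Igusa tower, which rests on the structure results for the tower from Section~\ref{tower-section}; and the injectivity modulo $p^n$ of the realization map of Section~\ref{realization-section}, which is precisely where the branching multiplicities studied in Section~\ref{littlewood-section} force the conditions on the weight.

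First I would recall that, once the relevant automorphic sheaf has been trivialized along $\Igmu$, a $p$-adic automorphic form over the $\mu$-ordinary locus is a global section of the structure sheaf of the formal completion of $\Igmu$ along its special fiber, and that its Serre--Tate expansion at a $\mu$-ordinary point $x$ is its image in the completed local ring at (a fixed lift of) $x$, expanded in the canonical coordinates on the $\mu$-ordinary deformation space described in Section~\ref{tower-section}. The first assertion of the proposition is then the statement that a global function on the formal Igusa tower vanishes as soon as its image in the completed local ring at one point of each geometrically connected component of the special fiber vanishes. Since the integral model is smooth over $\ZZ_p$ (as $p$ is unramified in $\reflexfield$), the $\mu$-ordinary locus is smooth and $\Igmu$ is a profinite \'etale cover of its formal completion; combining the description of the connected components of $\Igmu$ from Section~\ref{tower-section} with the fact that a function in a reduced ring vanishing in the local rings at points meeting every irreducible component is zero, the assertion follows.

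For the congruence statement I would work modulo $p^n$. Flatness over $\ZZ_p$ makes reduction faithful on global functions, so for $p$-adic forms $g_1,g_2\in V$ the congruence $g_1\equiv g_2\pmod{p^n}$ amounts to the vanishing of $g_1-g_2$ in $V/p^nV$; applying the expansion principle over $\ZZ/p^n\ZZ$ --- the Igusa tower over $\ZZ/p^n\ZZ$ having the same connected components as over $\F_p$ --- shows this is detected by finitely many Serre--Tate expansions modulo $p^n$, one per geometrically connected component, which is the meaning of ``sufficiently many.'' This settles the equivalence for forms viewed inside $V$. To transfer it to classical automorphic forms $f_1,f_2$ over the $\mu$-ordinary locus, the ``only if'' direction is immediate since both the realization map and the passage to Serre--Tate expansions commute with reduction modulo $p^n$, while the ``if'' direction combines the expansion principle (which yields that the images of $f_1,f_2$ in $V$ agree modulo $p^n$) with the injectivity modulo $p^n$ of the realization map on the given weight space --- exactly the hypothesis on the weights from Section~\ref{realization-section}.

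I expect the main obstacle to be this last point, the weight conditions. In the ordinary setting the automorphic sheaf is trivialized by the full group $\GL$, and Hida's $q$-expansion principle makes the realization map an embedding on every weight; in the $\mu$-ordinary setting the trivialization is only by the parabolic $P_\mu$ preserving the slope filtration (equivalently, by its Levi $J_\mu$ on the associated graded), so the realization of a classical form of weight $\kappa$ records only the $J_\mu$-highest-weight constituent of the corresponding representation, and this restriction need not be multiplicity-free. When a multiplicity exceeds $1$, distinct classical forms acquire the same image in $V$ and the realization map fails to be injective; the conditions on $\kappa$ are those ensuring the relevant branching multiplicity is $1$ (more precisely, that the realization map stays injective after reduction modulo $p^n$), as dictated by the Littlewood--Richardson-type analysis of Section~\ref{littlewood-section}. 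Pinning down which conditions suffice, and propagating them through the reduction modulo $p^n$, is the crux; the geometric expansion principle above is, by comparison, a direct consequence of the structure of the Igusa tower already established in Section~\ref{tower-section}.
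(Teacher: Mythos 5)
Your overall architecture (geometric expansion principle on the Igusa tower, followed by injectivity of the realization map under a weight hypothesis) matches the paper's, but two points deviate in a way that matters.

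First, the mechanism behind the expansion principle is different. You argue via reducedness and "one point per geometrically connected component." The paper instead fixes a single marked point $x$ on (a connected component of) $\cS$ and uses the identities $\loc_x(g\cdot f)=\loc_{x^g}(f)$ and $\loc_x(g\cdot f)=\kappa(g)\loc_x(f)$ for $g\in\torus(\Z_p)$, together with the transitivity of $\torus(\Z_p)$ on the set of connected components of $\Ig$ (Remark \ref{trans}, which relies on Proposition \ref{irr}). This is why Part 2 of Proposition \ref{prop-st} reduces the test for $f\in V[\kappa]$ to a single point: the scalar $\kappa(g)$ is a unit, so vanishing at $x$ propagates to all $x^g$. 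The paper does note your variant as an alternative ("one point on each connected component"), so your version is admissible, but note it is not the route the paper's proof actually takes, and it quietly requires an independent argument (reducedness/flatness of the Igusa tower over $\W_m$) where the paper's group-theoretic form of the statement handles the flat case uniformly.

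Second, and more importantly, your diagnosis of the weight conditions is off. You attribute the failure of injectivity of the realization map to branching multiplicities exceeding $1$. In fact the paper states plainly that $\varPhi_\kappa$ fails to be injective for \emph{every} non-scalar $\kappa$, multiplicity-free or not, because $\varPhi_\kappa$ is built from the canonical projection $\pi^\kappa\colon\omega^\kappa\twoheadrightarrow\uo^\kappa$ and therefore only records the single constituent $\uo^\kappa$ of $\gr((\omega^\kappa)_\bullet)=\bigoplus_{\kappa'\in\M_\kappa}\uo^{\kappa'}$ (Proposition \ref{grade}), discarding all $\kappa'\neq\kappa$. This is why Corollary \ref{necc} is stated for \emph{scalar} weights, where $\M_\kappa=\{\kappa\}$ and $\varPhi_\kappa$ is injective by Proposition \ref{psi}. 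For general dominant weights the paper does not try to make $\varPhi_\kappa$ injective at all; instead it uses the local splitting $\omega^\kappa_{x_0}\cong\bigoplus_{\kappa'\in\M_\kappa}\uo^{\kappa'}_{x_0}$ (Proposition \ref{localsplit}) and the refined expansions $f_x^{(\kappa')}(\uu)$ for all $\kappa'\in\M_\kappa$, so that Corollary \ref{cong-mult-cor} retains all constituents. Multiplicities do enter, but only in the remark after Corollary \ref{cong-mult-cor}: the congruence criterion becomes necessary and sufficient when the weights in $\M_{\kappa_i}$ are pairwise distinct modulo $p^m$, a condition about mod-$p^m$ separation rather than multiplicity-freeness. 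So the step "the conditions on $\kappa$ are those ensuring the relevant branching multiplicity is $1$" does not yield the statement you want; you would need to replace it with the scalar-weight restriction (for Corollary \ref{necc}) or the local multi-constituent expansion (for Corollary \ref{cong-mult-cor}).
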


In Sections \ref{structure-mo} and \ref{do-mo}, we construct the aforementioned differential operators. Our construction, which employs the Gauss--Manin connection, requires an appropriate replacement for the unit root splitting from \cite{kad} that Katz employs in his construction of differential operators in \cite{kaCM} (as the more general $\mu$-ordinary setting forces us to work with a module that is larger than just the unit root piece).  In the ordinary setting (e.g., \cite{kaCM, panch, apptoSHL, EFMV}), explicit description of the action of differential operators on $q$-expansions or Serre--Tate expansions allows one to construct $p$-adic families of automorphic forms.  In the $\mu$-ordinary setting, we expect the same should be true for Serre--Tate expansions.  We achieve a partial description of the action, and consequently families in Section \ref{new-families}, under certain conditions on the weights.  
\begin{thm}[Rough form of Theorem \ref{THETACONG}]\label{operators-thm}
For each positive dominant weight $\lambda$, there is a $p$-adic differential operator $\Theta^\lambda$ on $V$.  
At least under certain conditions on dominant weights $\lambda$ and $\lambda'$ (see Definition \ref{simple}), if $f$ is an automorphic form, then $\Theta^\lambda(f)\equiv\Theta^{\lambda'}(f)\mod p^n$ whenever $\lambda\equiv\lambda'\mod p^{n-1}(p-1)$.
\end{thm}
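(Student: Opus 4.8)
\emph{Proof strategy.} The plan is to build $\Theta^\lambda$ from the Gauss--Manin connection together with the substitute for the unit root splitting constructed in Section~\ref{structure-mo}, then to make its action on Serre--Tate expansions explicit, and finally to deduce the congruence from Euler's theorem and the Serre--Tate expansion principle. Over the $\mu$-ordinary Igusa tower $\Igmu$ the Hodge filtration $\uo\subset\hdrone$ of the universal abelian scheme carries the Gauss--Manin connection $\nabla\colon\hdrone\to\hdrone\otimes\Omega^1$ and, by Section~\ref{structure-mo}, a splitting of $\Fil^1\hdrone=\uo$ that plays the role of Katz's unit root splitting from \cite{kad} but is adapted to the slope filtration, so that it exists and is horizontal even when the ordinary locus is empty (where one is forced to work with a module strictly larger than the unit root piece). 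Composing $\nabla|_{\uo}$ with this projection and with the Kodaira--Spencer isomorphism $\KS\colon\Omega^1_{\Smu}\isomto\schur(\uo)$ produces an additive operator that raises weights by a fixed Schur functor; iterating it $|\lambda|$ times and then applying the idempotent that projects onto the irreducible constituent of highest weight $\lambda$ gives $\Theta^\lambda$. Because the automorphic sheaves are canonically trivialized over $\Igmu$, this descends to an operator on the relevant completion of $V$. The conditions on $\lambda$ in Definition~\ref{simple} enter precisely here: they ensure that the $\lambda$-constituent occurs with multiplicity one and that the corresponding projector is canonical and $p$-integral, which is where the multiplicity phenomena recorded in Section~\ref{littlewood-section} would otherwise obstruct matters.

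Next, using the local analysis at $\mu$-ordinary points from Section~\ref{tower-section}, I would compute the action of $\Theta^\lambda$ on a Serre--Tate expansion $f=\sum_\beta a_\beta\,q^\beta$. In Serre--Tate coordinates the Gauss--Manin connection is computed by the Kodaira--Spencer class $d\log q$, and the splitting from Section~\ref{structure-mo} is horizontal, so that, after the canonical trivialization over $\Igmu$, each application of the basic operator multiplies the $\beta$-th coefficient by the linear form attached to $\beta$, and after the highest-weight projection the net effect is $a_\beta\mapsto\lambda(\beta)\,a_\beta$, where $\lambda(\beta)$ denotes the value at $\beta$ of the polynomial determined by the weight $\lambda$. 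The conditions on the weights are used again here, to guarantee that the projector contributes exactly this evaluation and introduces no additional denominators.

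The congruence is then formal. If $\lambda\equiv\lambda'\bmod p^{n-1}(p-1)$ then $u^\lambda\equiv u^{\lambda'}\bmod p^n$ for all $u\in\ZZ_p^\times$ by Euler's theorem, so $\lambda(\beta)\equiv\lambda'(\beta)\bmod p^n$ whenever $\beta$ is a $p$-adic unit; the coefficients indexed by $\beta$ divisible by $p$ are controlled by the positivity hypothesis on $\lambda$, exactly as in Serre's classical congruences between $\theta^k$ and $\theta^{k'}$. Thus the Serre--Tate expansions of $\Theta^\lambda(f)$ and $\Theta^{\lambda'}(f)$ agree modulo $p^n$, and since $f$ is a classical form with $p$-integral coefficients, Proposition~\ref{intro-thm1} upgrades this to $\Theta^\lambda(f)\equiv\Theta^{\lambda'}(f)\bmod p^n$ in $V$.

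I expect the main obstacle to be the explicit computation of the action on Serre--Tate expansions: extracting the formula $a_\beta\mapsto\lambda(\beta)\,a_\beta$ requires tracking the Kodaira--Spencer isomorphism and the highest-weight idempotent through $|\lambda|$ iterations of $\nabla$ while keeping control of $p$-integrality, and it is the failure of these idempotents to be canonical or integral once higher multiplicities appear --- a phenomenon absent from the ordinary case --- that forces the restrictions of Definition~\ref{simple}.
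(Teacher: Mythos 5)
Your construction of $\Theta^\lambda$ matches the paper's: Gauss--Manin plus the canonical horizontal complement $U$ from Proposition~\ref{unitroot} (playing the role of the unit root splitting), composed with Kodaira--Spencer, iterated, projected, and then extended to $V$ by density of $\Image(\Psi)$. The final step --- pass to local expansions, apply an Euler-type congruence, and lift back via the $u$-expansion principle --- is also the paper's route, with Proposition~\ref{thetacong} playing the local role and Theorem~\ref{THETA} supplying the comparison $\loc_x\circ\Theta^\lambda=\theta^\lambda\circ\loc_x$.

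However, you have misidentified what the ``simple'' condition of Definition~\ref{simple} is for, and this is the crux of the new difficulty in the $\mu$-ordinary case. You claim the condition enforces multiplicity-one in the restriction $\rho_\lambda\vert_{\levi}$ so that the highest-weight idempotent is canonical and $p$-integral. That is not the obstruction here: the Littlewood--Richardson/multiplicity phenomena of Section~\ref{littlewood-section} matter for the realization maps $\varPhi_\kappa$ in Section~\ref{realization-section}, not for the differential operators, and in any case ``multiplicity-free'' is a far weaker condition than ``simple.'' What Definition~\ref{simple} actually does is constrain $\lambda$ to be supported on the $\gr^{s,0}_\co$ pieces of $\CL^2$, which by Remark~\ref{MULT} are exactly the directions in Moonen's Serre--Tate cascade corresponding to the Barsotti--Tate groups $\bG_{0,e}\cong\X^{\can}(\co,1,\cf'_{0,e})^{\oplus\cdots}$ built out of $\hat{\Gm}$. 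In those directions the parameter is genuinely of $q$-type, $\ks^{-1}d$ produces $d\log q=(1+u)^{-1}du$, and the $(1+u)\partial_u$ computation from \cite{EFMV} goes through verbatim, which is what gives the formula $a_\beta\mapsto\lambda(\beta)a_\beta$ and hence the congruence via Euler's theorem. By contrast, the other pieces of the cascade (Remark~\ref{LT}) are controlled by Lubin--Tate formal group laws, where $dlog_G(u)=G_x(0,u)^{-1}du$ is not of the form $(1+u)^{-1}du$ and no analogue of the multiplicative-group calculation is available in the paper. Your proof as written would therefore not explain why one cannot allow arbitrary sum-symmetric weights, and it would break at exactly the step where you claim ``each application of the basic operator multiplies the $\beta$-th coefficient by the linear form attached to $\beta$'' --- that step is only valid in the $\hat{\Gm}$ directions, and ``simple'' is the hypothesis that keeps you there.
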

\noindent The computation of the action is substantially more challenging in the $\mu$-ordinary (with empty ordinary locus) setting, due to challenges coming from Lubin--Tate formal group laws, as seen in Section \ref{ST-mu-LT}.

\subsection{Challenges arising in the $\mu$-ordinary (but not ordinary) setting}\label{challenges-section}

While some aspects of Hida's theory carry over directly to the $\mu$-ordinary setting, obstacles arise for other aspects.  First, the replacement of the action of a general linear group on the ordinary Igusa tower by the action of its parabolic subgroup preserving the slope filtration leads to issues with multiplicities of representations, in turn forcing conditions on the weights of the automorphic forms embedding into $V$.  As seen in Section \ref{realization-section}, restricting to multiplicity-free weights does not by itself yield an embedding.  In fact, the sheaf of classical automorphic forms is not in general necessarily canonically isomorphic to the associate graded sheaf with which we must work over the $\mu$-ordinary Igusa tower.  By working locally, though, we are able to obtain results sufficiently strong for our applications.  Second, the construction of the differential operators in Sections \ref{structure-mo} and \ref{do-mo} requires intricate work (primarily by carefully extending \cite{KatzSlope, moonen}), especially for the Kodaira--Spencer morphism and an appropriate replacement for the unit root splitting, to accommodate the structure from the slope filtrations.  Furthermore, as noted above, the crucial description of the action of those operators requires involved formal group computations.  Finally, the argument used in \cite[Section 7]{EFMV} for constructing explicit families of automorphic forms on a product of unitary groups $G'$ embedded in a larger unitary group whose associated Shimura variety has nonempty ordinary locus falls apart whenever the ordinary locus of the Shimura variety associated to $G'$ is empty.

\subsection{Acknowledgements}
We thank H.\ Hida for feedback on the initial idea for this project.  We are also very grateful to E.\ de Shalit and E.\ Goren for helpful feedback on the first version of this paper, which led to key improvements.  
Our work on this paper benefitted from conversations with several additional mathematicians concerning topics closely related to this paper.  E.M.\ thanks B.\ Moonen for a discussion about his earlier work on which portions of this paper rely significantly, and she thanks M.-H.\ Nicole for a discussion about his work with W.\ Goldring on the $\mu$-ordinary Hasse invariant.    
We also thank E.\ Rains for alerting us to the Littlewood--Richardson rule mentioned in Section \ref{LRrule}, and we thank M.\ Harris for helpful feedback about references.  We are grateful to J.\ Fintzen and I.\ Varma for conversations concerning the collaboration \cite{EFMV}, whose influence is seen here.  We thank Caltech and the University of Oregon for hosting us during this collaboration.

\section{Preliminaries and Key Background Information}\label{preliminaries-section}
\subsection{Notation and conventions}\label{notation-section}

Let $p$ be an odd prime. 
Denote by $\BK$ the maximal unramified extension of $\Q_p$ in an algebraic closure $\bar{\Q}_p$ of $\Q_p$, $\W$ the completion of the ring of integers of $\BK$, $\F$ the residue field of $\W$, and $W(\F)$ the Witt vectors of $\F$.  We identify $\W$ with $W(\F)$.  For any integer $m\geq 1$, we write $\W_m=\W/p^m\W$, and for $S$ a $\W$-scheme, we write $S_m$ for the $\W_m$-scheme $S\times_{\spec \W}\spec {\W_m}$.

Let $F$ be a quadratic imaginary extension of a totally real field $F_0$ of degree $d$ over $\Q$.  We write $\T$ for the set of complex embeddings of $F$.  We write $\T_0$ for the set of real embeddings of $F_0$, as well as for a set containing a choice of an embedding in $\T$ above $\tau$ for each $\tau\in\T_0$  (i.e., for the set denoted $\Sigma$ in \cite{EFMV}).
We fix an isomorphism $\iota: \C\to \bar{\Q}_p$.  Via $\iota$, we view $\Dring:=\bar{\IQ}\cap\CO_{\bar{\IQ}_p}\subset\bar{\IQ}$ as a subring of $\IC$ and a subring of $\bar{\IQ}_p$. Via $\iota$, we also define a bijection $\tau\mapsto \iota\circ\tau$ between the complex embeddings $\tau:F\to \C$ and the $p$-adic embeddings of $F$ into $\bar{\Q}_p$.  

Assume $p$ is unramified in $F$. Then all $p$-adic embeddings of $F$ into $\bar{\Q}_p$ have image in $\BK$, and thus we can identify the set $\T$ with the set of embeddings of $\CO_F$ into $\W$, i.e., with the set ${\rm Hom}(\CO_F,\F)$.
Let $\sigma$ denote the Frobenius automorphism on $\F$.  (Abusing notation, $\sigma$ will also denote Frobenius on $\W$ and $\BK$.)  By composition, the identification $\T={\rm Hom}(\CO_F,\F)$ defines an action of $\sigma$ on $\T$. For each $\tau\in\T$ we write $\co_\tau$ for the orbit of $\tau$ under $\sigma$. We write $\FO$ for the set of $\sigma$-orbits $\co$ in $\T$.  Given a $\sigma$-orbit $\co$, we let $e_\co$ denote the cardinality of $\co$.

Note that there is a natural bijection between $\FO$ and the set of primes of $F$ above $p$.  For each prime $u|p$, we write $\co_u$ for the orbit associated with $u$, and $u_\co$ for the prime associated with an orbit $\co\in\O$.  For any $\co\in\FO$, we write $\co^*:=\co_{u^*}$ if $\co=\co_u$, where $*$ denotes the image under complex conjugation.  Finally, we define $\FO_0$ to be the subset of $\FO$ corresponding to a choice of a prime $u|v$ for each $v|p$ in $F_0$. Thus,  for $\FO_0^*:=\{\co^*|\co\in\FO_0\}$, we have
 $\FO=\FO_0\cup \FO_0^*$ (possibly not disjoint).  Note that $\FO_0:=\{\sigma_\tau|\tau\in\T_0\}$.

\subsection{Shimura varieties}\label{Shimura-section}

Following \cite[Section 5]{kottwitz}, we introduce the Shimura data and varieties with which we work.  Let $B$ be a simple $\Q$-algebra with center $F$.   Recall from Section \ref{notation-section} that $p$ is unramified in $F$.  We furthermore require that $B$ splits at each prime of $F$ above $p$.  Let $r$ be the rank of $B$ over $F$, and let $\CO_B$ be a $\ZZ_{(p)}$-order in $B$ whose $p$-adic completion $\CO_{B,p}$  is a maximal order in $B_{\IQ_p}$ and such that $\CO_{B,p}$ is identified with $M_r(\CO_{F, p})$.  
Let $*$ be a positive involution on $B$ over $\IQ$ preserving $\CO_B$.  
Let $(V, \langle, \rangle)$ consist of a finitely generated left $B$-module $V$ and a $\IQ$-valued hermitian form $\langle, \rangle$ on $V$, and let $G$ be the automorphism group of $(V, \langle, \rangle)$ (i.e., a general unitary group).  We also denote by $*$ the involution on $\End_B(V)$ coming from $\langle, \rangle$.  Let $h:\IC\rightarrow \End_B(V_\IR)$ be an $\IR$-algebra homomorphism such that $h(\bar{z}) = h(z)^*$ for all $z\in \IC$ and such that $(v, w)\rightarrow \langle v, h(i) w\rangle$ is positive definite on $V_\IR$.

As in \cite{kottwitz}, let $Sh(G,X)$ denote the unitary Shimura variety associated to the data $\CD=\left(B,V,*,\langle,\rangle, h\right)$, and let $E=E(G,X)$ denote the reflex field of $Sh(G,X)$.
We also assume that $p$ is a prime of good reduction for $Sh(G,X)$, i.e., that the level $\K$ of $Sh(G,X)$ is of the form $\K=\K^{(p)}\K_p$
where $\K^{(p)}\subset G(\A_f^p)$ is a neat, open compact subgroup, and $K_p$ is hyperspecial maximal compact.
We write $\Sh$ for the integral model of $Sh(G,X)$ over $\Z_p\otimes\CO_E$.

 Let $\iota_E: E\to \bar{\Q}_p$ denote the restriction of $\iota$ to $E\subset \C$. Our assumptions imply that 
$\iota_E$ factors through $\BK$,  mapping $\CO_E$ to $\W$. 
We write $\p$ for the associated prime above $p$ of $E$, $\CO_{E, \p}$ for the localization of $\CO_E$ at $\p$, $\CO_{E_\p}$ for the completion of $\CO_E$ at $\p$, and $\ck$ for its residue field. Via $\iota_E$, we regard $\CO_{E,\p}\subset\W$ and $\ck\subset \F$, and identify $\CO_{E_\p}=W(\ck)$.
Abusing notation, we still denote by $\Sh$ the base change of $\Sh$ to $\W$, and write $\sh$ for its special fiber. 

\subsection{Automorphic weights}\label{autweight}
Following \cite[Section 4]{kottwitz}, we write $V_\IC = V_1\oplus V_2$, where $V_1 = \left\{v\in V_\IC | h(z)v= zv \mbox{ for all } z\in\IC\right\}$ and $V_2 = \left\{v\in V_\IC | h(z) v = \bar{z} v \mbox{ for all } z\in \IC\right\}$.  Note that for $i=1, 2$, $V_i$ decomposes as 
\begin{align}\label{decompVi}
V_i = \oplus_{\tau\in\T}V_{i, \tau},
\end{align}
and $V_{\IC}$ decomposes as
\begin{align} 
V_{\IC}= \oplus_{\tau\in\T}V_\tau,
\end{align}
with
\begin{align}\label{decompVtau}
V_\tau := V_{1, \tau}\oplus V_{2, \tau}.
\end{align}

Let $\Levi$ be the Levi subgroup of $G_\IC^0$ that preserves this decomposition of $V_\IC$, i.e., the Levi subgroup determined by the signature $\left(a_\tau^+, a_\tau^-\right)$ of $G_\IC^0$ at each $\tau|_{F_0}$ for each $\tau\in\T$.  
Let $\Borel$ be a Borel subgroup of $\Levi$, let $T$ be a maximal torus contained in $\Borel$, and write $\Nilp$ for the unipotent radical of $\Borel$.  (Since $B$ is widely used in the literature to denote a Borel subgroup, as well as to denote a division algebra in the Shimura data, we will use $B$ for both of these.  Going forward, however, it will be clear from context which of these $B$ denotes, and indeed, it will soon be the case that we will only need to refer to the Borel and not the division algebra.)   Denote by $B^\op$ the opposite Borel with respect to $(B, T)$. We write $\Torus=\prod_{\tau\in\T}\Torus_\tau$, $\Borel=\prod_{\tau\in\T}\Borel_\tau$, and $\Borel^\op=\prod_{\tau\in\T}\Borel^\op_\tau$.    A choice of basis for $V_\IC$ that is compatible with the decompositions \eqref{decompVi}--\eqref{decompVtau} identifies $\Levi$ with $\prod_{\tau\in\T}\GL_{a_\tau^+}$.  Such a basis can be chosen so that $\Borel_\tau$ is identified with the subgroup of upper triangular matrices in $\GL_{a_\tau^+}$, $\Borel^\op_\tau$ with the subgroup of lower triangular matrices in $\GL_{a_\tau^+}$, and $\Torus_\tau$ with $\Torus_{a_\tau^+} := \Gm^{a_\tau^+}$, which is in turn identified with the subgroup of diagonal matrices in $\prod_{\tau\in\T}\GL_{a_\tau^+}$.  Via $\iota$, we can define $\Levi$ $p$-adically over $\CO_{E_\p}$.  Note that all our groups are split over $\CO_{E_\p}$.

Let $X^*(\Torus)$ denote the group of characters on $\Torus$.  For any module $M$ on which $\Torus$ acts, we denote by $M[\kappa]$ the $\kappa$-eigenspace for the action of $\Torus$ on the module $M$.

For the remainder of this subsection, we briefly recall key facts about algebraic representations of general linear groups and their relationships with certain characters on tori, following \cite[Section 2.4]{EFMV}, \cite[Sections 5.1.3 and 8.1.2]{Hida}, \cite[Part II, Chapter 2]{Jantzen}, and \cite[Sections 4.1 and 15.3]{FH}.  Let 
\begin{align*}
X(\Torus)_+ = \left\{\left(\kappa_{1, \tau}, \ldots, \kappa_{a_\tau^+, \tau}\right)_{\tau\in\T}\in\prod_{\tau\in\T}\ZZ^{a_\tau^+, \tau}| \kappa_{i, \tau}\geq \kappa_{i+1, \tau}\mbox{ for all } i \right\}.
\end{align*}
We identify $X(\Torus)_+$ with the set of {\it dominant weights} in the group $X^*(\Torus)$  of characters of $\Torus\subset \Borel$, given by 
\begin{align*}
\prod_{\tau\in\T}\diag\left(t_{1, \tau}, \ldots, t_{a_\tau^+, \tau}\right)\mapsto \prod_{\tau\in\T}\prod_{1\leq i \leq a_\tau^+} t_{i, \tau}^{\kappa_{i, \tau}}.
\end{align*}
(N.B. Such characters are also characters on $\Borel\supset T$, via $\Borel/\Nilp\cong T$.)
We write $\kappa$ both for an element of $X(\Torus)_+$ and for the associated character in $X^*(\Torus)$.  For each integer $k$, we write $\underline{k}$ for $\kappa$ such that $\kappa_{i, \tau} = k$ for all $i$ and all $\tau$.
We say that $\kappa$ is {\it positive} if $\kappa_{i, \tau}\geq 0$ for all $i$.  For positive $\kappa$, we define 
$d_{\kappa, \tau}:=\sum_{i=1}^{a_\tau^+}\kappa_{i, \tau}$ and $|\kappa|:=d_\kappa:=\sum_{\tau\in\T}d_{\kappa, \tau}$.  (Note that in \cite{EFMV}, we denote $d_{\kappa, \tau}$ by $d_{\kappa,\tau}^+$ or, equivalently, $d_{\kappa, \tau^*}^-$.)  
Similarly to \cite[Definitions 2.4.3 and 2.4.4]{EFMV}, we call a weight $\kappa =\left(\kappa_{1, \tau}, \ldots, \kappa_{a_\tau^+, \tau}\right)_{\tau\in\T} \in X(\Torus)_+$ {\it sum-symmetric} if $\kappa$ is positive and $d_{\kappa, \tau}=d_{\kappa, \tau*}$ for all $\tau\in\T$.  In this case, we call $e_\kappa:=d_\kappa/2$ the {\it depth} of $\kappa$ (or of the associated representation $\rho_\kappa$ introduced below).  If we furthermore have that $\kappa_{i, \tau}=\kappa_{i, \tau^*}$ for all $1\leq i\leq \min (a_\tau^+, a_{\tau^*}^+)$, then we call $\kappa$ {\it symmetric}.  Note that this is the same as the condition on the weights occurring in \cite[Theorem 12.7]{Shimura}.

Following the conventions of \cite[Section 2.4.2]{EFMV}, let $R$ be a $\ZZ_p$-algebra or a field of characteristic $0$, and for any dominant weight $\kappa$, let $\schur_\kappa$ denote the $\kappa$-Schur functor on the category of $R$-modules.  (A helpful reference on Schur functors is \cite[Section 15.3]{FH}.)  We denote by $\rho_\kappa=\rho_{\kappa, R}$ the representation $\schur_\kappa\left(\oplus_{\tau\in\T}\left(R^{a_\tau^+}\right)\right)$ of $\prod_{\tau\in\T}\GL_{a_\tau^+}$.   
If the ring $R$ is, furthermore, a field of characteristic $0$ (or of sufficiently large characteristic $p$), the algebraic representations $\rho_\kappa=\rho_{\kappa,F}$ of $\prod_{\tau\in\T}\GL_{a_\tau^+}$ are irreducible  and in bijection with the dominant weights $\kappa$ (see, e.g., \cite[Chapter II.2]{Jantzen}); and in the following, we refer to $\rho_\kappa$ as the irreducible representation of highest weight $\kappa$. 
When $R$ is such a field, the module $\rho_{\kappa,\CO}$ denotes our choice of an $\CO$-lattice inside the irreducible representation $\rho_{\kappa, R}$, where $\CO$ denotes the ring of integers in $R$.

Given a locally free sheaf of modules $\mathcal{F}$ and  $\kappa$ a dominant weight, we define
\begin{align*}
\mathcal{F}^\kappa:=\schur_\kappa\left(\mathcal{F}\right).
\end{align*}
Following the conventions of \cite{EFMV, EDiffOps, CEFMV}, we also sometimes write $(\cdot)^{\rho_\kappa}$ for $\schur_\kappa(\cdot)$,
and  $|\mathcal{F}|$ for the highest exterior power of $\mathcal{F}$.

Note that, more generally, we can replace $\Torus$ with any torus in a product of finite rank general linear groups and replace $X(\Torus)_+$ with ordered tuples on this torus and use Schur functors to construct representations in this context.   See \cite[Section 2.4.2]{EFMV} for a summary of the construction.  For example, we can consider $\Torus_{a_\tau^+}$ (respectively, $\Torus_{a_\tau^-}$) in $\GL_{a_\tau^+}$ (respectively, $\GL_{a_\tau^-}$).  If $\kappa_\tau$ is a positive dominant weight (ordered tuple, in this case) and $R$ is as above, of sufficiently large characteristic or of characteristic $0$, then the $\kappa_\tau$-Schur functor on the category of $R$-modules is $\schur_{\kappa_\tau}(V):= V^{\otimes d_{\kappa, \tau}}\cdot c_{\kappa, \tau}$, where $c_{\kappa, \tau}$ is the Young symmetrizer associated to $\kappa_\tau$.  Similarly to \cite[Section 2.4.3]{EFMV}, for each positive dominant weight $\kappa$ in $X(\Torus)_+$, by applying the generalized Young symmetrizer, we obtain a projection $\pi_\kappa: V^{\otimes d_\kappa}\rightarrow \rho_\kappa$, for $V$ the standard representation $\oplus_{\tau\in\T}\left(R^{a_\tau^+}\right)$ of $\prod_\tau\GL_{a_\tau^+}$.  If $\kappa$ is sum-symmetric of depth $e_\kappa$, then the representation $\rho_\kappa$ is a quotient of $\left(\otimes_{\tau\in\T}R^{a_\tau^+}\right)^{\otimes e_\kappa}$.

Following \cite[Definition 2.4.3]{EFMV}, which is inspired by \cite[Section 8.1.2]{Hida}, we write  $\cup_\tau\left\{b\dual_{1, \tau}, \ldots, b\dual_{a_\tau^+, \tau}\right\}$ for the dual basis to the standard basis $\cup_\tau\left\{b_{1, \tau}, \ldots, b_{a_\tau^+, \tau}\right\}$ of the standard representation $\oplus_{\tau\in\T}\left(R^{a_\tau^+}\right)$ of $\prod_{\tau\in\T}\GL_{a_\tau^+}$, and define $\ellcan^\kappa$ to be the basis of $\Hom_{B^\op}\left(\rho_\kappa, \kappa\right)$ such that 
$\ellcan^\kappa\circ\pi_\kappa =\prod_{\tau\in\T}\prod_{i=1}^{a_\tau^+}(\kappa_{i, \tau}!)^{-1}\cdot \otimes_{\tau\in\T}\otimes_{i=1}^{a_\tau^+}\left(b_{\tau, i}\dual\right)^{\otimes\kappa_{i,\tau}}\cdot c_\kappa$.  
We define $ {\tildeellcan}^\kappa:=\ellcan^\kappa\circ\pi_\kappa$.  By \cite[Lemma 2.4.6]{EFMV}, if $\kappa$ is a positive dominant weight and $\kappa'$ is a sum-symmetric weight, then $\pi_{\kappa\kappa'}$ factors through the map $\pi_\kappa\otimes\pi_{\kappa'}$, and ${\tildeellcan}^\kappa\otimes{\tildeellcan}^{\kappa'} = {\tildeellcan}^{\kappa\kappa'}.$

\subsection{Weights and representations}\label{littlewood-section}

Let $\Para$ denote a parabolic subgroup of $\Levi$ containing $\Borel$. We denote by $\Uni$ the unipotent radical of $\Para$, and we write $\Levii=\Para/\Uni$ for the Levi subgroup of $\Para$. 

For any irreducible algebraic representation $\rho $ of $\Levi$ over a field of characteristic $0$ (or of sufficiently large characteristic), the restriction of $\rho$ to $\Para$ admits a $\Uni$-stable filtration with irreducible $\Uni$-invariant subquotients.  Furthermore, after choosing a splitting $\Levii\subset \Para$, 
the associated graded representation  
$\gr(\rho{\vert_\Para})$ of $\Levii$  and $\rho\vert_{\Levii}$ are canonically identified.  We fix a splitting $\Levii\subset \Para$, and write $\Borell=\Borel\cap \Levii$, $\Nilpp=\Nilp\cap\Levii$, and $\Toruss=\Torus\cap \Levii$. Then $\Borell$ is a Borel subgroup of $\Levii$ with $\Nilpp$ its unipotent radical and $\Toruss=\Torus$ a maximal torus of $\Levii$.

\begin{defi}
Given dominant weights $\kappa$ of $\Levi$ and $\kappa'$ of $\Levii$, we say that $\kappa'$ divides $\kappa$ (and write $\kappa'\mid\kappa$) if the irreducible representation $\varrho_{\kappa'}$ of  $\Levii$, of highest weight $\kappa'$, arises as one of the 
irreducible constituents of $\rho_\kappa\vert_{\Levii}$.
 
For each dominant weight $\kappa$ of $\Levi$, we define $\M_\kappa:=\{\kappa' \mid\, \kappa' \mbox{ divides } \kappa\}$, regarded as a multi-set (so that we keep track of multiplicities).
Then
\begin{align}\label{rhokap-decomposition}
\rho_\kappa\vert_{\Levii}=\bigoplus_{\kappa'\in\M_\kappa}\varrho_{\kappa'}.
\end{align}
\end{defi}
Note that $\kappa^\sigma\mid \kappa$, 
 for any permutation $\sigma$ in the Weyl group of $\Levi$ such that $\kappa^\sigma$ is dominant for $\Levii$. 
In particular, for any dominant weight $\kappa$ of $\Levi$, $\kappa$ is also a dominant weight of $\Levii$ and $\kappa \mid \kappa$. Note that if $\kappa $ is a scalar weight of $\Levi$, then
the only dominant weight of $\Levii$ dividing $\kappa$ is $\kappa$ itself, i.e., $\M_\kappa=\{\kappa\}$ and  $\rho_\kappa\vert_{\Levii}=\varrho_\kappa$.

\subsubsection{Littlewood--Richardson rule}
\label{LRrule}

In general, the multiplicity $c_{\kappa,\kappa'}$ of the irreducible constituent $\varrho_{\kappa'}$ in $\rho_{\kappa}{\vert_{\Levii}}$ can be explicitly computed using the Littlewood--Richardson rule (\cite[Rule (9.2) in Section 9]{littlewood-richardson} or \cite[Equation (A.8)]{FH}).

\begin{defi}
We say that a dominant weight $\kappa$ of $\Levi$ is multiplicity-free (with respect to the Levi subgroup $\Levii$) if $c_{\kappa,\kappa'}=1$ for all $\kappa'|\kappa$. 
\end{defi}

We observe that as $\kappa$ varies among the dominant weights of $\Levi$, the sets $\M_\kappa$ are not necessarily disjoint. 

If $|\kappa_1|\neq |\kappa_2|$, then by considering the central action of the scalars, we see that
$\kappa_1$ and $\kappa_2$ are coprime, i.e., $\M_{\kappa_1}\cap\M_{\kappa_2}=\O$.
More generally, given any two dominant weights $\kappa_1,\kappa_2$ of $\Levi$, the Littlewood--Richardson rule allows one to compute the intersection $\M_{\kappa_1}\cap \M_{\kappa_2}$.

\begin{rmk}\label{lattices}
Assume $\Levii\subset \Levi$ is a split Levi subgroup, defined over $\CO_{E_\p}$. 
Then the Schur projectors and Young symmetrizers from Section \ref{autweight} are compatible with the decomposition in (\ref{rhokap-decomposition}), in the sense that
$\schur_\kappa(V){\vert_{\Levii}} = \oplus_{\kappa'\in\M_\kappa}\schur_{\kappa'}(V)$, for $V$ the standard representation, and $R=E_\p$.

Furthermore,  our choices of $\CO_{E_\p}$-lattices of the irreducible algebraic representations $\rho_\kappa, $ and $\varrho_{\kappa'}$,  for $\kappa,\kappa'$ dominant weights of $\Levi,\Levii$, are also compatible with the decomposition in (\ref{rhokap-decomposition}), in the sense that the Schur projectors induce a morphism $\rho_{\kappa,\CO_{E_\p}}\vert_{\Levii}\hookrightarrow\oplus_{\kappa'\in\M_\kappa}\varrho_{\kappa',\CO_{E_\p}}$, which is an isomorphism if $p$ is sufficiently large (or after inverting $p$).
\end{rmk}

\subsection{Automorphic sheaves}\label{autsheaf}

Let $A$ denote a $\CD$-enriched abelian scheme (i.e., an abelian scheme with additional structures defined by the Shimura data $\CD)$ over a $\W$-scheme $S$. (For example, the abelian scheme $A/S$ could be the universal abelian scheme $\CA$ over $\Sh$.)  Then the Dieudonn\'e crystal of $A$ decomposes according to the embeddings $\tau\in \T$ and Morita equivalence (via $\CO_B\otimes_{\CO_F,\tau}\W\simeq M_r(\W)$) as 
$$H^1_{dR}(A):=H^1_{dR}(A/S)=\oplus_{\tau\in\T} M_\tau^{\oplus r}.$$  Similarly, the Hodge filtration $\omega(A):=Fil^1(H^1_{dR}(A))$ decomposes as
$$\omega(A)=\oplus_{\tau\in\T} \omega_\tau^{\oplus r}.$$
Note that $M_\tau$ and $\omega_\tau$ are locally free.  Note also that, for each $\tau\in\T$, the rank of $M_\tau$ is independent of $\tau$, while the rank of $\omega_\tau$ depends on $\tau$.
More precisely, if $(a^+_\tau,a^-_\tau)$ denotes the signature of $G$ at $\tau_{|F_0}$, then we have
$\rk M_\tau=n=a^+_\tau+a^-_\tau$ and $\rk \omega_\tau=a^+_\tau$.
Following the notation in \cite{moonen}, we write
$\cf(\tau)=\rk \omega_\tau$.  Observe that $\cf\left(\tau^*\right)=a^+_{\tau^*} = a^-_{\tau}$.

\subsubsection{Classical automorphic forms}Similarly to \cite[Section 2.3]{EFMV} and \cite[Section 3.2]{CEFMV}, we now define classical automorphic forms.  First, we define a sheaf
\begin{align*}
\CE_{A/S}:= \prod_\tau\Isom\left(\omega_\tau, \CO^{\cf(\tau)}_{S, \tau}\right)=:\Isom_{\CO_{B, (p)}\otimes \CO_S}\left(\omega, \CO^{g}_{S}\right),
\end{align*}
where $\CO_{B, (p)}$ is the localization of $\CO_B$ at $(p)$ and $g=\rk(\omega)$.
When it is clear from context, we drop the subscript $A/S$ and just write $\CE$.
Note that there is a left action of $\Levi$ on $\CE_{A/S}$ coming from the action, for each $\tau\in\T$, of $\GL_{a_\tau^+}$ on $\Isom\left(\omega_\tau, \CO^{\cf(\tau)}_{S, \tau}\right)$.  Following \cite[Section 2.3]{EFMV}, for any representation $\left(\rho, M_\rho\right)$ of $\Levi$, we define the sheaf
\begin{align*}
\CE^\rho:=\CE_\rho:=\CE\times^\Levi M_\rho
\end{align*}
so that for each open immersion $\Spec R\hookrightarrow S$, $\CE^\rho(R):=\left(\CE(R)\times M_\rho\otimes R\right)/(\ell, m)\sim(g\ell, \rho({ }^tg^{-1})m)$.  

An {\it automorphic form (defined over $R$) of weight $\rho$} is then a global section of $\CE^\rho$ on $\Sh_R$.  An automorphic form of weight $\rho$ and {\it level} $\K$ is a global section of $\CE^\rho$ on $\Sh_R$ with $\K$ the level of $\Sh_R$.  We exclude the case in which $F_0=\IQ$ with $a_\tau^+ = a_\tau^-=1$.  (As far as this paper's goals are concerned, nothing is lost from this exclusion.  The interesting cases in this paper concern unitary groups of higher rank.)  Then by the Koecher principle (stated in great generality in \cite[Theorem 2.5]{lan5}, with additional details of interest in our setting in \cite[Theorem 2.3 and Section 10]{lan5}), our space of automorphic forms is the same as the space we would have obtained by instead working over a compactification of our moduli space.

It follows from the definitions that for $\kappa$ a dominant weight of $\Levi$, and $\rho=\rho_\kappa$
our choice of $W(\ck)$-lattice of the irreducible algebraic representation of $\Levi$ of  highest weight $\kappa$, the sheaves $\omega^\kappa$ and $\CE^\rho$ are canonically identified.
In the following, we prefer the notation $\omega^\kappa$ to $\CE^\rho$.

\subsection{The $\mu$-ordinary locus} 
We maintain the notation of Section \ref{autsheaf}.

Let $A$ denote a $\CD$-enriched abelian scheme (i.e., an abelian scheme with additional structures defined by the data $\CD$, e.g., PEL structure) over a smooth $\F$-scheme $S$ (e.g., the universal abelian scheme $\CA$ over $\sh$). We write $\Phi$ for the Frobenius map on the filtered Dieudonn\'e crystal of $A$, 
$$\omega(A)=\oplus_{\tau\in\T} \omega_\tau^{\oplus r}\subset H^1_{dR}(A)=\oplus_{\tau\in\T} M_\tau^{\oplus r}.$$

For each $\tau\in \T$, $\Phi$ induces a map $\Phi_\tau:M_\tau\to M_{\tau\circ \sigma}$. In particular, we deduce that, for $e_\tau$ the cardinality of $\co_\tau$,  the pair $(M_\tau,\Phi_\tau^{e_\tau})$ is a Dieudonn\'e crystal, whose isogeny class depends only on the orbit $\co_\tau$ of $\tau$.  In the following, we write $\nu_\tau$ (or $\nu_\co$ for $\co=\co_\tau$) for the Newton polygon of $(M_\tau,\Phi_\tau^{e_\tau})$.

\begin{rmk}\label{BTslope}
The above description reduces the computation of the Newton polygon of the Frobenius map $\Phi$ on $H^1_{dR}(A)$ to that of the polygons $\nu_\co$, for $\co\in\FO$. 
By abuse of notation, in the following, we refer to the slopes of $\nu_\co$ as the slopes at $\co$ of $(H^1_{dR}(A),\Phi)$.
\end{rmk}

The notion of $\mu$-ordinariness is originally due to Wedhorn in \cite{wedhorn}.  The following definition is adapted to the above notation, and follows  \cite[Section 1.2.5]{moonen}.  (In {\it loc. cit.} the $\mu$-ordinary Newton polygon $\nu_\co(n,\cf)$ is denoted by ${\rm Ord}(d,\cf)$, for $n=d$.)

\begin{defi}\label{slopes}
For each $\sigma$-orbit $\co$ in $\mathfrak O$, we define $\cf=\cf_\co:\co\to \{1, \dots ,n\}$ by
$\cf(\tau):=\rk \omega_\tau$, for  $\tau\in\co$.  (The function $\cf$ is called the {\it multiplicative type}.)
We define the {\it $\mu$-ordinary Newton polygon} $\nu_\co(n,\cf)$ associated with the triple $(\co,n,\cf)$ 
to be the polygon with slopes \[a^\co_j:=\#\{\tau\in \co |\cf(\tau)>n-j\},\]
for $j=1,\dots, n$.
\end{defi}

\begin{defi}
A $\CD$-enriched abelian variety $A$ over a field containing $\F$ is called $\mu$-ordinary if for each $\tau\in\T$ the associated Newton polygon $\nu_\tau$ agrees with the $\mu$-ordinary polygon $\nu_{\co_\tau}(n,\cf)$.
\end{defi}

We say that a point $x$ of $\sh$ is $\mu$-ordinary if the associated $\CD$-enriched abelian scheme $\CA_x$ is $\mu$-ordinary.
In the following, we denote by $\sh^{\muord}\subseteq \sh$  the $\mu$-ordinary locus of $\sh$.

In \cite{wedhorn}, Wedhorn proves that the $\mu$-ordinary locus is the largest nonempty Newton stratum of $\sh$.

\begin{thm} \cite[(1.6.2) Density Theorem]{wedhorn}
The Newton stratum $\sh^{\muord}$ is open and dense in $\sh$. In particular, it is nonempty.
\end{thm}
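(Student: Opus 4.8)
The plan is to prove this as Wedhorn does in \cite{wedhorn}, treating openness and density separately, since they rely on rather different inputs. For openness, the key point is that Newton polygons jump up under specialization, so each Newton stratum $\sh^{\nu}$ is locally closed, and the $\mu$-ordinary stratum $\sh^{\muord}$ is the one attached to the smallest polygon that actually occurs. To see which polygon that is, I would combine Remark \ref{BTslope} with Definition \ref{slopes}: the Newton polygon of $(\hdrone(\CA_x),\Phi)$ decomposes over the $\sigma$-orbits $\co\in\FO$, so it suffices to bound the polygon $\nu_\co$ from below orbit by orbit. For a fixed orbit $\co$, the pair $(M_\tau,\Phi_\tau^{e_\tau})$ is an $F$-crystal whose Hodge polygon is determined by the multiplicative type $\cf_\co$, and the classical theory of $F$-crystals (Newton lies on or above Hodge, Mazur's inequality) shows that $\nu_{\co_\tau}(n,\cf)$ as defined in Definition \ref{slopes} is a lower bound for $\nu_\co$. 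The genuinely substantive step is that this lower bound is \emph{attained} at some point: one exhibits a single $\CD$-enriched abelian variety over $\bar\F_p$ whose crystal realizes exactly the polygons $\nu_\co(n,\cf)$. Following Moonen \cite{moonen} and Wedhorn, the natural candidate is the reduction of a suitable CM abelian variety (with CM by $F$, of the correct signature type), whose Newton polygon can be computed explicitly from the Shimura--Taniyama formula and seen to match $\nu_\co(n,\cf)$; this gives a $\mu$-ordinary point and hence $\sh^{\muord}\neq\emptyset$, and being the minimal stratum it is open.

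For density, I would argue that the closure of $\sh^{\muord}$ is all of $\sh$. The cleanest route is to show that every irreducible component $Z$ of $\sh$ meets $\sh^{\muord}$, and then use openness: $Z\cap\sh^{\muord}$ is open and nonempty in the irreducible $Z$, hence dense in $Z$. To see that $Z$ meets the $\mu$-ordinary locus one uses a dimension/purity argument for Newton strata: by the purity theorem (Newton strata are ``pure'' of codimension equal to the corresponding value of a codimension function, cf.\ de Jong--Oort and its refinements), any stratum strictly smaller than the $\mu$-ordinary one has positive codimension in $\sh$, so it cannot contain a full component. Equivalently, one can invoke the fact that $\sh$ is smooth (good reduction, hyperspecial level at $p$) of the expected dimension and that the non-$\mu$-ordinary locus is a proper closed subset; then its complement is dense. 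Wedhorn in fact gives a direct argument via the Rapoport--Zink local model and the flatness/irreducibility of strata there, which I would follow rather than reconstruct.

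The main obstacle, and the place where real work is hidden, is the \emph{nonemptiness}: producing an explicit $\mu$-ordinary point and verifying that the Newton polygon of its crystal equals $\nu_\co(n,\cf)$ for every orbit $\co$. This is exactly the CM-point computation carried out in \cite{wedhorn} and recast in the $F$-crystal language of \cite[Section 1.2]{moonen}. Once nonemptiness is in hand, openness is formal (semicontinuity of Newton polygons plus minimality of $\nu_\co(n,\cf)$ among occurring polygons), and density follows from openness together with the smoothness/irreducibility of the components of $\sh$, or from purity of the Newton stratification. Since all of these inputs are available in \cite{wedhorn} and \cite{moonen}, in the write-up I would simply cite \cite[(1.6.2)]{wedhorn} and indicate that the minimal occurring polygon is the one in Definition \ref{slopes}, with the nonemptiness witnessed by the reduction of an appropriate CM abelian variety.
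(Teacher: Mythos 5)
The paper does not prove this statement at all; it is quoted verbatim as a citation to Wedhorn's \cite[(1.6.2) Density Theorem]{wedhorn}, and no argument is reproduced. Your reconstruction of what Wedhorn actually does is accurate in outline (Newton/Hodge semicontinuity and Mazur's inequality giving the lower bound $\nu_\co(n,\cf)$ orbit-by-orbit, nonemptiness witnessed by a CM point computed via Shimura--Taniyama, then density from smoothness of $\sh$ at hyperspecial level together with openness of the minimal stratum), and you correctly identify nonemptiness as the substantive step. Your closing sentence — that in the write-up you would simply cite Wedhorn and point to Definition \ref{slopes} for the minimal polygon — is exactly what the paper does, so there is no gap; you have merely supplied, for the reader's benefit, the outline of a proof the paper chooses to omit.
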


Subsequently, in \cite{moonen}, Moonen gives an explicit construction of a 
$\CD$-enriched \BT group $\X=\X^{\muord}(\CD)/\ck$ in terms of the triples $\{(\co_\tau, n, \cf)\}_{\tau\in\T}$, and he proves that the $\mu$-ordinary locus is also the largest Ekedahl--Oort stratum of $\sh$, and the central leaf associated with $\X^{\muord}(\CD)/\F$ (recall $\ck\subset \F$). That is, he proves the following result.

\begin{thm}\cite[Theorem 3.2.7]{moonen}
Let $A$ be a $\CD$-enriched abelian variety over an algebraically closed field containing $\F$.
Then the following are equivalent:
\begin{itemize}
\item $A$ is $\mu$-ordinary (equivalently, $A[p^\infty]$ is isogenous to $\X$ as $\CD$-enriched \BT groups).
\item $A[p]$ is isomorphic to $\X^{\muord}(\CD)[p]$ as $\CD$-enriched truncated \BT groups of level 1.
\item $A[p^\infty]$ is isomorphic to $\X^{\muord}(\CD)$ as $\CD$-enriched \BT groups.
\end{itemize}
\end{thm}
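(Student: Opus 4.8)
The implications $(3)\Rightarrow(1)$ and $(3)\Rightarrow(2)$ are immediate: an isomorphism of $\CD$-enriched \BT groups $A[p^\infty]\cong\X$ is in particular an isogeny, hence equalizes Newton polygons, and it restricts to an isomorphism on $p$-torsion. The content of the theorem is therefore the two implications $(1)\Rightarrow(3)$ and $(2)\Rightarrow(3)$, after which $(2)\Rightarrow(1)$ follows by composing $(2)\Rightarrow(3)\Rightarrow(1)$.

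For $(1)\Rightarrow(3)$, the plan is to work at the level of (filtered) Dieudonn\'e modules and exploit the orbit decomposition $H^1_{\mathrm{dR}}(A)=\oplus_{\tau\in\T}M_\tau^{\oplus r}$ together with the pairs $(M_\tau,\Phi_\tau)$ and the polarization pairing $\co\leftrightarrow\co^*$, as in Remark~\ref{BTslope}. One first records the explicit construction of $\X=\X^{\muord}(\CD)$ as a direct sum, over the orbits $\co\in\FO$ and over the $n$ slopes $a^\co_j$ of $\nu_\co(n,\cf)$ from Definition~\ref{slopes}, of standard building blocks: each block carries the prescribed multiplicative type $\cf$ and has Frobenius given in a fixed basis by an explicit (twisted) companion-type matrix, so that its Newton polygon is a single slope and its Hodge numbers match $\cf$ segment by segment. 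The crucial rigidity input is the combinatorial fact that $\nu_\co(n,\cf)$ is the smallest polygon lying on or above the Hodge polygon attached to $\cf$ with the same endpoints; equivalently, $\nu_\co(n,\cf)$ and the Hodge polygon share all their breakpoints. Under this extremality, a semilinear-algebra argument over the algebraically closed base --- classifying isocrystals (with their Frobenius) of the given slopes together with an $\CO_B$-stable lattice realizing the prescribed Hodge filtration --- shows that $(M_\tau,\Phi_\tau)$ with its $\CO_B$-action is forced up to isomorphism to be the one coming from $\X$. Reassembling over $\tau\in\T$, matching the polarizations, and passing back from Dieudonn\'e modules to \BT groups yields $A[p^\infty]\cong\X$ as $\CD$-enriched \BT groups.

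For $(2)\Rightarrow(3)$, the plan is to invoke the rigidity of the $p$-kernel: $\X[p]=\X^{\muord}(\CD)[p]$ is a \emph{minimal} $\CD$-enriched truncated \BT group of level $1$, meaning there is a unique $\CD$-enriched \BT group --- namely $\X$ itself --- with this $p$-torsion. This is where one imports the minimality theory of truncated Barsotti--Tate groups of Oort, adapted to the PEL setting: because each building block of $\X$ is of the simplest possible shape, $\X$ is the canonical lift of $\X[p]$, and any \BT group $G$ with $G[p]\cong\X[p]$ satisfies $G\cong\X$. Applying this to $G=A[p^\infty]$ gives $(2)\Rightarrow(3)$, and then $(2)\Rightarrow(1)$ follows.

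The main obstacle is the rigidity step inside $(1)\Rightarrow(3)$: one must prove that the $\mu$-ordinary Newton polygon is so close to the Hodge polygon (they share all breakpoints) that the isogeny class, once intersected with the locus of $\CO_B$-stable lattices realizing the fixed Hodge numbers, contains a single isomorphism class --- i.e., that the central leaf through a $\mu$-ordinary point fills out the entire $\mu$-ordinary Newton stratum $\sh^{\muord}$. Carrying this out uniformly across all orbit types $(\co,n,\cf)$ and bookkeeping the $\CO_B$-action and the polarization throughout is the delicate part; the minimality input needed for $(2)\Rightarrow(3)$ is comparatively formal once the building blocks of $\X$ have been identified.
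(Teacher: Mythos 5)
The paper does not give a proof of this theorem; it is quoted verbatim from Moonen and cited directly. So there is no proof in the paper to compare yours against, and your task was to reconstruct Moonen's argument. Your reconstruction has the correct overall architecture: $(3)\Rightarrow(1),(2)$ are formal, $(1)\Rightarrow(3)$ is a rigidity statement that the isomorphism class of $A[p^\infty]$ is determined once its Newton polygon is forced to be the $\mu$-ordinary one, and $(2)\Rightarrow(3)$ is an Oort-type minimality statement for the $p$-kernel of $\X$. These are indeed the two nontrivial inputs in Moonen's proof.

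A few places where your account is looser than what the argument actually requires. First, you phrase the rigidity input for $(1)\Rightarrow(3)$ as ``$\nu_\co(n,\cf)$ and the Hodge polygon share all their breakpoints.'' The $\mu$-ordinary Newton polygon is \emph{not} the classical Hodge polygon of the $\CO_B$-module $H^1_{\mathrm{dR}}(A)$ (which only has slopes $0$ and $1$); what is true, and what is needed, is that $\nu_\co(n,\cf)$ is the unique minimal element in the partially ordered set of Newton polygons of $\CD$-enriched \BT groups with the given multiplicative type $\cf$, and that this minimum is achieved exactly when the isocrystal decomposes as a direct sum of isoclinic pieces each of extremal Hodge shape --- a generalized Katz-style ``Newton meets Hodge'' decomposition. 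The classification of the resulting isoclinic $\CO_{F,u}$-modules up to isomorphism is then genuinely rigid, and this is what lets you recover $\X$. Second, for $(2)\Rightarrow(3)$ you say ``$\X$ is the canonical lift of $\X[p]$''; canonical lift has a fixed meaning (lifting to characteristic $0$) that is not what you intend. The correct term is that $\X[p]$ is a \emph{minimal} truncated \BT group of level $1$ in Oort's sense, suitably enriched with the $\CD$-structure, so that any \BT group with the same $p$-kernel is isomorphic to $\X$. Finally, Moonen's actual proof passes through the Ekedahl--Oort stratification and the classification of $\CD$-enriched $\mathrm{BT}_1$ groups --- identifying the $\mu$-ordinary stratum as the unique open dense EO stratum --- and your sketch elides this intermediate layer, which is where a fair amount of the bookkeeping you flag as ``the delicate part'' actually lives.
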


\subsection{The $\mu$-ordinary Hasse invariant}\label{hasse}\label{HasseGN}

Building on Moonen's work, in \cite{GN} Goldring and Nicole construct a $\mu$-ordinary Hasse invariant.  

Let $\pi:\CA\to \Sh$ denote the universal abelian scheme over $\Sh$, and let $|\omega|$ denote the Hodge line bundle over $\Sh$, i.e., 
$|\omega|=\wedge^{\rm top} \pi_*\Omega^1_{\CA/\Sh}$, where $\wedge^{\rm top}$ denotes the top exterior power.

\begin{thm}\cite[Theorem 1.1]{GN}
There exists an explicit positive integer $m_0\geq 1$, and a section $$\hasse\in H^0(\sh,|\omega|^{m_0})$$
such that: 
\begin{enumerate}
\item The non-vanishing locus of $\hasse$ is the $\mu$-ordinary locus of $\sh$.
\item The construction of $\hasse$ is compatible with varying the level $\K^{(p)}$.
\item The section $\hasse$ extends to the minimal compactification of $\sh$.
\item A power of $\hasse$ lifts to characteristic zero.
\end{enumerate}
\end{thm}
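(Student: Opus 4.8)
The plan is to build $\hasse$ one orbit at a time and then multiply the pieces together. By Moonen's theorem (\cite[Theorem 3.2.7]{moonen}, recalled above), a $\CD$-enriched abelian variety over an algebraically closed field containing $\F$ is $\mu$-ordinary if and only if, for every $\co\in\FO$, the Newton polygon $\nu_\co$ of $(M_\tau,\Phi_\tau^{e_\tau})$ ($\tau\in\co$) equals the $\mu$-ordinary polygon $\nu_\co(n,\cf)$ of Definition \ref{slopes}; moreover Newton polygons go up under specialization, so the locus where the polygon at a fixed $\co$ equals $\nu_\co(n,\cf)$ is open. It therefore suffices to construct, for each $\co\in\FO$, a section $h_\co$ of a positive power of the line bundle $|\omega_\co|:=\bigotimes_{\tau\in\co}\wedge^{\cf(\tau)}\omega_\tau$ (a factor of $|\omega|=\bigotimes_{\co\in\FO}|\omega_\co|^{\otimes r}$) on $\sh$ whose non-vanishing locus is exactly that open set. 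Granting this, one sets $\hasse:=\prod_{\co\in\FO}h_\co^{\,m_\co}$, choosing the exponents $m_\co$ (a common-multiple bookkeeping) so that the product lies in $H^0(\sh,|\omega|^{m_0})$ for a single positive integer $m_0$; by Moonen's theorem its non-vanishing locus is then exactly $\sh^{\muord}$, which by Wedhorn's Density Theorem \cite{wedhorn} is nonempty, so in particular $\hasse\neq0$. This gives (1).

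To construct $h_\co$, fix $\co$, put $e=e_\co$, pick $\tau\in\co$, and work with the Dieudonn\'e crystal of $\CA/\sh$ (reduced mod $p^m$ as needed) and the partial Frobenius maps $\Phi_{\tau'}\colon M_{\tau'}\to M_{\tau'\circ\sigma}$, which satisfy the usual incidences with the Hodge filtration, notably $\omega_{\tau'}\subseteq\ker(\Phi_{\tau'}\bmod p)$. Iterating $\Phi$ around $\co$ on the quotients $M_{\tau'}/\omega_{\tau'}$ and dividing suitable subquotients by powers of $p$ — the divisions being dictated by the jump pattern of the multiplicative type $\cf$ along $\co$, equivalently by the slopes $a^\co_1\le\cdots\le a^\co_n$ — one obtains a canonical decreasing filtration on $M_\tau/\omega_\tau$, defined over all of $\sh$ by $p$-divisibility conditions on these iterates, with graded pieces $\gr^j$ and $\sigma^e$-semilinear maps $\psi_j$ on the $\gr^j$. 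Moonen's explicit description of the $\mu$-ordinary \BT group $\X^{\muord}(\CD)$ in terms of the triples $(\co,n,\cf)$ shows that at a $\mu$-ordinary point the $\gr^j$ are locally free of the expected ranks and the $\psi_j$ are isomorphisms; one then sets $h_\co:=\bigotimes_j\det(\psi_j)$, which by the duality coming from the polarization on $\CA$ is a section of a positive power of $|\omega_\co|$ (the exponent being read off from the $a^\co_j$ and $p$; when the ordinary locus is nonempty this reduces to a power of the classical Hasse invariant). For the non-vanishing locus: if some $\psi_j$ degenerates at a point $x$, the $p$-adic valuations of the corresponding iterates of partial Frobenius at $x$ strictly exceed the generic ones, so by the Grothendieck--Katz specialization theorem for Newton polygons \cite{KatzSlope} the Newton polygon at $\co$ at $x$ lies strictly above $\nu_\co(n,\cf)$ and $x$ is not $\mu$-ordinary; conversely, at a $\mu$-ordinary point $\CA_x[p]\cong\X^{\muord}(\CD)[p]$, and the explicit structure forces every $\psi_j$ to be an isomorphism, so $h_\co(x)\neq0$.

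Properties (2)--(4) are then comparatively formal. For (2): $\hasse$ is built entirely from the Dieudonn\'e crystal of the universal abelian scheme and its partial Frobenius, which are compatible with the finite \'etale transition maps among the $\sh$ as $\K^{(p)}$ varies, so $\hasse$ is too. For (3): one extends the construction over the boundary, either by noting that $|\omega|$ extends to the minimal compactification $\sh^{\min}$ and invoking the Koecher principle (as for automorphic forms in Section \ref{autsheaf}) when the boundary has codimension $\ge2$, or by first extending $\hasse$ over a toroidal compactification using the Dieudonn\'e crystal of the attached semi-abelian scheme and then descending to $\sh^{\min}$. For (4): $|\omega|$ is ample on the proper scheme $\sh^{\min}$, so after replacing $m_0$ by a large multiple $m_0N$ (equivalently $\hasse$ by $\hasse^N$), Serre vanishing gives $H^1(\sh^{\min},|\omega|^{m_0N})=0$; hence the obstruction to lifting $\hasse^N$ successively from $\W_1$ to $\W_m$ for every $m$ vanishes, and the resulting compatible system algebraizes to a section of $|\omega|^{m_0N}$ over the minimal compactification of $\Sh$ over $\W$ lifting $\hasse^N$.

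The main obstacle is the middle step: extracting from Moonen's explicit $\X^{\muord}(\CD)$ the precise behaviour of the divided partial Frobenius on the Hodge filtration — that is, pinning down the canonical filtration on $M_\tau/\omega_\tau$, the ranks of its graded pieces, and the $p$-divisibility bookkeeping that turns $h_\co$ into an honest section of a positive power of $|\omega_\co|$ — and then proving, via Grothendieck--Katz specialization, that the vanishing locus of $h_\co$ is \emph{exactly} (not merely contained in) the non-$\mu$-ordinary locus at $\co$. The combinatorics that fixes the weights $m_\co$ and the final exponent $m_0$ drops out of this analysis, but is itself somewhat intricate.
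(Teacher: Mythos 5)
This theorem is stated in the paper as a citation of \cite[Theorem 1.1]{GN}; Eischen and Mantovan give no proof of it, so there is no ``paper's own proof'' to compare against. You have attempted to reconstruct the Goldring--Nicole argument, and while the overall shape (build a partial invariant $h_\co$ for each orbit $\co$ from iterated Frobenius/Verschiebung, multiply over orbits, and use Moonen plus Grothendieck--Katz specialization for the non-vanishing locus) is in the right spirit, the crucial middle step has a genuine gap.

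The gap is in the sentence where you ``obtain a canonical decreasing filtration on $M_\tau/\omega_\tau$, defined over all of $\sh$ by $p$-divisibility conditions on these iterates, with graded pieces $\gr^j$ and $\sigma^e$-semilinear maps $\psi_j$ on the $\gr^j$,'' and then set $h_\co := \bigotimes_j \det(\psi_j)$. Filtrations cut out by $p$-divisibility of Frobenius iterates are coherent subsheaves of the crystal, but away from the $\mu$-ordinary locus they are not locally free subbundles with locally free quotients: the ranks of the would-be slope pieces jump exactly where the Newton polygon goes up, i.e.\ on the complement of $\sh^{\muord}$. Consequently the $\gr^j$ are not vector bundles over all of $\sh$, and $\det(\psi_j)$ is not a well-defined section of a line bundle on $\sh$; you only get it on the open $\mu$-ordinary locus, where it is tautologically nonvanishing, so the argument never actually produces a global section whose vanishing locus one can analyze. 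This is not a bookkeeping issue but the heart of the theorem. The construction in \cite{GN} sidesteps it by never forming a global filtration: instead one fixes, for each break point of the $\mu$-ordinary polygon, a specific exterior power of the Hodge bundle (always a line bundle on all of $\sh$), composes Verschiebung around the orbit on that fixed exterior power, and reads off the section of a power of $|\omega|$ directly; showing that its zero locus is precisely the non-$\mu$-ordinary locus is then the substantive work. Your reduction of (2)--(4) to ``$\hasse$ is built from the crystal of the universal abelian scheme, $|\omega|$ extends to $\sh^{\min}$, and a power lifts by ampleness plus vanishing'' is essentially correct and does match the standard argument, but it presupposes that the global section from step (1) actually exists.
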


By construction  (\cite[Definition 3.5]{GN}),
$m_0={\rm lcm}_{\tau\in\T}(p^{e_\tau}-1)$.
In the following, for convenience, we replace $\hasse$ with one of its powers which lifts to characteristic zero. We choose $\hasse\in H^0(\Sh^{\min},|\omega|^{m})$, for some $m\geq 1$ a  multiple of $m_0$, where $\Sh^{\min}$ denotes the minimal compactification of $\Sh$.

\begin{defi} 
We define the $\mu$-ordinary locus  $\Sh^{\muord}_{/\W}$ (respectively,  $\Sh^{\min,\muord}_{/\W}$)  as the locus   in $\Sh_{/\W}$ (respectively, $\Sh^{\min}_{/\W}$) where $E_\mu$ is invertible.
\end{defi}

Similarly to the treatment of the ordinary case in \cite[Section 8.1]{Hida}, 
we define the formal scheme $\Sh_\infty^{\muord}$ over $\W$ as the injective limit  of the schemes $\Sh_m^{\muord}/\W_m$.
Note that
$\Sh_\infty^{\muord}$ is  the formal completion of $\Sh^{\muord}$ along its special fiber modulo $p$,  $\Sh^{\muord}_1=\sh^{\muord}$.

We observe that, by construction, the sheaf $|\omega|^{m_0}$ is trivial on the $\F$-scheme $\sh^{\muord}$.
We normalize $\hasse$ so that on $\sh^{\muord}$ $$\hasse\equiv 1\mod p,$$ and we call $\hasse$ the {\it $\mu$-ordinary Hasse invariant}.
In the following, for simplicity, we set $\cS:=\Sh^{\muord}_\infty$ and $\bS:=\sh^{\muord}$.

\subsection{The $\mu$-ordinary \BT group} In this section, we briefly recall the construction of the $\mu$-ordinary $\CD$-enriched \BT group $\X=\X^{\muord}(\CD)/\ck$ from Moonen \cite[Section 1.2.3]{moonen}

Let $N^{\muord}(\CD)$ denote the Dieudonn\'e crystal of $\X$. The underlying $\W$-module $N^{\muord}(\CD)$ decomposes  according to the embeddings $\tau\in \T$. Grouping together the submodules corresponding to $\tau\in\co$, for each orbit $\co$ in $\FO$, we obtain a decomposition in subcrystals 
$$N^{\muord}(\CD)=\oplus_{\co\in\FO} N(\co,n,\cf)^{\oplus r}.$$
We write the associated decomposition of $\X^{\muord}(\CD)$ as  $$\X^{\muord}(\CD)=\oplus_{\co\in\FO}\X(\co,n,\cf)^{\oplus r}.$$

Note that the $\CD$-enriched structure on $N^{\muord}(\CD)$ (respectively, $\X^{\muord}(\CD)$) induces a structure of  $\CO_{F,u_\co}$-crystals on $N(\co,n,\cf)$ (respectively, of 
$\CO_{F,u_\co}$-modules on $\X(\co,n,\cf)$).

Fix an orbit $\co\in\FO$.  Following the conventions of Section \ref{notation-section}, let $e_\co$ denote the cardinality of $\co$.  Let  $0\leq a^\co_1\leq \dots \leq a^\co_n\leq e=e_\co$ denote the slopes of the $\mu$-ordinary polygon $\nu_\co(\cf,n)$ introduced in Definition \ref{slopes}.  
We write 
$0\leq \lambda_0< \cdots <\lambda_s$,  $s=s_\co$, for the (distinct) integers occurring as slopes $a^\co_j$ for some $j$, $1\leq j\leq n$.
For each $t=0,\dots ,s$, we denote by $m_t = m_t^{\co_\tau}$ the multiplicity of the slope $\lambda_t$, i.e., \[m_t:=\#\{j\in\{1, \ldots, n\}|a^\co_j=\lambda_t\}.\]
Note that $\sum_0^s m_t=n$.
\begin{defi}
The crystal $N(\co,n,\cf)$ is defined as   \[N(\co,n,\cf):=N(\lambda_0)^{\oplus m_0}\bigoplus \cdots \bigoplus N(\lambda_s)^{\oplus m_s},\] where for each $t=0,\dots ,s$, the crystal  $N(\lambda_t)$ is the simple isoclinic $\CO_{F,u_\co}$-crystal of slope $\lambda_s$, and height (i.e., rank) $e$.
We write the associated decomposition of $\X(\co,n,\cf)$ into isoclinic components as \[\X(\co,n,\cf):=\X(\lambda_0)^{\oplus m_0}\bigoplus \cdots \bigoplus \X(\lambda_s)^{\oplus m_s}.  \] 
\end{defi}

We observe that, for $\co\neq \co^*$, the polarization of $\X$ induces an isomorphism of $\CO_{F,u_\co}$-crystals between  $N(\co,n,\cf)$ and the  dual of $N(\co^*,n,\cf)$. In particular, $\lambda$ is a slope of $N(\co,n,\cf)$  with multiplicity $m$ if and only if $1-\lambda$ is a slope of $N(\co^*,n,\cf)$
with the same multiplicity. 
For $\co=\co^*$,  the polarization of $\X$ induces a  polarization on $N(\co,n,\cf)$, i.e., $N(\co,n,\cf)$ inherits the structure of a $(\CO_{F,u_\co},*)$-crystal. In particular,  $\lambda$ is a slope of $N(\co,n,\cf)$  with multiplicity $m$ if and only if $1-\lambda$ is also a slope with the same multiplicity.

In \cite[Definition 2.3.10]{moonen}, Moonen defines a canonical lifting $\X^{\rm can}=\X^{\rm can}(\CD)$ of $\X$ over $W(\ck)$.  Concretely, $\X^{\rm can}:=\oplus_\co \X(\co,n,\cf)^{\rm can}$
where \begin{align*}
\X(\co,n,\cf)^{\rm can}:= \left( \X^{\rm can}(\lambda_0)^{\oplus m_0}\bigoplus \cdots \bigoplus \X^{\rm can}(\oplus\lambda_s)^{m_s}\right)^{\oplus r}
\end{align*} 
and, for each  $t=0,\dots,s$,
$\X^{\rm can}(\lambda_t)$ is the unique lifting of  the $\CO_{F,u}$-module $\X(\lambda_t)$ (\cite[Corollary 2.1.5]{moonen}).
The $\CD$-enriched \BT group $\X^{\rm can}$ is characterized by the following property.

\begin{prop}\label{canonicallift}
\cite[Proposition 2.3.12]{moonen} The canonical lifting $\X^{\rm can}$  is the unique lifting  of $\X$ with the property that (geometrically) all endomorphisms lift.  
\end{prop}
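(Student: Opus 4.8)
The plan is to run the $p$-divisible-group analogue of the Serre--Tate canonical-lift argument, using Grothendieck--Messing deformation theory together with the explicit isoclinic decomposition of $\X$ and the classification of the building blocks $\X(\lambda_t)$ from \cite[Sections 1.2, 2.1]{moonen}. Since $\spec\ck\hookrightarrow\spec W_m(\ck)$ is a nilpotent divided-power thickening for every $m\geq 1$, a lifting of $\X$ to $W(\ck)$ is the same datum as a compatible system of lifts of the Hodge filtration $\omega(\X)\subset N^{\muord}(\CD)$ inside the Dieudonn\'e crystal; and by crystalline functoriality an endomorphism of $\X_{/\F}$ lifts to such a deformation precisely when the endomorphism it induces on $N^{\muord}(\CD)$ preserves the chosen lifted filtration. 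Throughout I read ``endomorphism'' as $\CO_B$-linear endomorphism of the underlying $p$-divisible group (the polarization being part of the datum of a $\CD$-enriched lifting); this choice matters, since the full $\ZZ_p$-linear endomorphism algebra of an isoclinic constituent of $\X$ is a matrix algebra over a division algebra and does not lift in general. With this reading, ``(geometrically) all endomorphisms lift'' becomes ``the lifted Hodge filtration is stable under $\End_{\CO_B}(\X_{/\F})$'' (acting $\W$-linearly on $N^{\muord}(\CD)$), and the proposition reduces to exhibiting a unique such stable lift and identifying it with $\X^{\rm can}$.

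First I would compute $\End_{\CO_B}(\X_{/\F})$. Since $\X$ is $\mu$-ordinary it is, already over $\ck$, the split direct sum of its isoclinic pieces, with $\CO_{B,p}=M_r(\CO_{F,p})$ acting through the exponent $r$ in $\left(\bigoplus_{\co\in\FO}\bigoplus_{t}\X(\lambda_t)^{\oplus m_t}\right)^{\oplus r}$. A nonzero homomorphism between isoclinic $p$-divisible groups of distinct slopes would have an isoclinic image of both slopes, so there are none; and $\CO_F=\prod_u\CO_{F,u}$ forces $\CO_F$-linear homomorphisms between constituents attached to distinct orbits to vanish. Combining this with Morita equivalence for the $M_r$-factor and the (short Dieudonn\'e) computation $\End_{\CO_{F,u_\co}}(\X(\lambda_t)_{/\F})=\CO_{F,u_\co}$ --- which holds because $N(\lambda_t)$ has $\CO_{F,u_\co}$-rank $1$, so each $\tau$-component $N(\lambda_t)_\tau$ is free of rank $1$ over $\W$ --- one obtains
\begin{align*}
\End_{\CO_B}(\X_{/\F})\;\cong\;\prod_{\co\in\FO}\;\prod_{t}M_{m_t}\!\left(\CO_{F,u_\co}\right).
\end{align*}
In particular this algebra contains the idempotents projecting onto each isoclinic summand $\X(\lambda_t)^{\oplus m_t}$ and onto each of its $m_t$ copies, together with the matrix units permuting those copies, and all of these commute with the $\CO_{F,u_\co}$-actions because both the slope filtration and the $M_r$-decomposition are canonical.

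For uniqueness, let $\mathcal{Y}/W(\ck)$ be a $\CD$-enriched lifting all of whose geometric endomorphisms lift. Stability of the lifted Hodge filtration under the idempotents and matrix units above forces it to decompose along the isoclinic blocks and, within each block, to be diagonal, i.e., of the form $\left(\Fil_{\lambda_t}\right)^{\oplus m_t}$ for a lift $\Fil_{\lambda_t}\subset N(\lambda_t)$ of the Hodge filtration of $\X(\lambda_t)$; but since each $N(\lambda_t)_\tau$ is $\W$-free of rank $1$, the $\tau$-component of $\Fil_{\lambda_t}$ is rigid --- it is either $0$ or all of $N(\lambda_t)_\tau$, as dictated by $\cf(\tau)$ --- so there is no deformation-theoretic freedom left. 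Hence there is a unique such stable lift, and therefore a unique $\CD$-enriched lifting with all geometric endomorphisms lifting. For existence one checks that this unique lift is genuinely $\CD$-enriched: the $\CO_{B,p}$-action is built into its Hodge filtration by construction, and the polarization lifts by the same rigidity (its $\CO_B$-equivariance forces the dual of the unique $\CO_B$-stable lift to be again the unique $\CO_B$-stable lift of the dual). Finally, \cite[Corollary 2.1.5]{moonen} identifies the unique lift of each $\X(\lambda_t)$ as an $\CO_{F,u_\co}$-module with Moonen's $\X^{\rm can}(\lambda_t)$, so this unique lift is precisely $\X^{\rm can}=\bigoplus_{\co}\bigoplus_{t}\X^{\rm can}(\lambda_t)^{\oplus m_t}$ (with the $M_r$-copies), which is the proposition.

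I do not expect any single hard theorem to be the obstacle; rather, the delicate part is the careful PEL bookkeeping. One must fix that ``endomorphism'' means $\CO_B$-linear endomorphism (the point being that this is exactly the notion for which $\CO_B$-stability of a Hodge-filtration lift kills all deformation-theoretic freedom); establish the block decomposition of $\End_{\CO_B}(\X_{/\F})$; handle the polarization, especially on the self-dual orbits $\co=\co^*$, where it interchanges isoclinic constituents dual to one another; and check that the uniqueness obtained after the base change to $\W$ and $\F$ implicit in ``geometrically'' descends to $W(\ck)$ and $\ck$ (using that the idempotents, hence the decomposition, are canonical and therefore Galois-stable). The one genuinely external ingredient is Moonen's classification and rigidity for the $\CO_{F,u_\co}$-modules $\X(\lambda_t)$ and their canonical lifts, \cite[Section 2.1]{moonen}, which is used both for the building-block identity $\End_{\CO_{F,u_\co}}(\X(\lambda_t)_{/\F})=\CO_{F,u_\co}$ and to recognize the unique stable lift as $\X^{\rm can}$.
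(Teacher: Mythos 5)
The paper offers no proof of this proposition: it is quoted verbatim as Moonen's \cite[Proposition 2.3.12]{moonen}, with the authors deferring entirely to Moonen, so there is no in-paper argument to compare against. Your proof is correct and is in substance the argument Moonen gives: reduce via Grothendieck--Messing to a rigidity statement about $\CO_B$-stable lifts of the Hodge filtration inside the Dieudonn\'e crystal; use the idempotents and matrix units in $\End_{\CO_B}(\X_{/\F})\cong\prod_{\co,t}M_{m_t}(\CO_{F,u_\co})$ to force the lifted filtration to split diagonally along the isoclinic decomposition; and conclude by the rank-one rigidity of each $\tau$-component $N(\lambda_t)_\tau$ together with \cite[Corollary 2.1.5]{moonen} to identify the unique lift as $\X^{\rm can}$. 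Two points are compressed and worth making explicit in a self-contained write-up: the identity $\End_{\CO_{F,u_\co}}(\X(\lambda_t)_{/\F})=\CO_{F,u_\co}$ really does rest on the $\sigma$-semilinear cocycle condition (an $\CO_F$-linear endomorphism acts on each rank-one $N(\lambda_t)_\tau$ by a scalar $c_\tau$ with $c_{\tau\sigma}=\sigma(c_\tau)$, hence is determined by a single $\sigma^{e}$-invariant scalar, i.e.\ an element of $W(\kappa(\co))=\CO_{F,u_\co}$), rather than merely on $N(\lambda_t)_\tau$ being $\W$-free of rank one; and the polarization check on self-dual orbits $\co=\co^*$ (where the duality swaps the slope-$\lambda$ and slope-$(e-\lambda)$ blocks) deserves the careful verification you correctly flag. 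Neither is a gap; both are routine.
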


\subsection{The $\mu$-ordinary Levi subgroup}\label{allsubgr}
In this section, we introduce a Levi subgroup $\levi$ of $G^0/\Q_p$ associated with the $\mu$-ordinary polygon which plays a crucial role in our results.  As highlighted in Remark \ref{details} below, the group $\levi$ arises as a subgroup of the Levi subgroup $\Levi$ introduced in Section \ref{autweight}.

\begin{defi}
We define $\levi$ to be the algebraic group over $\Q_p$ of automorphisms of the $\CD$-enriched isocrystal $N^{\muord}(\CD)[\frac{1}{p}]$.

In particular, $$\levi(\Q_p)={\rm Aut}^0_{\CD}(\X/\F),$$ 
the group of non-zero quasi-self-isogenies of the $\CD$-enriched \BT group $\X/\F$.

\end{defi}
\begin{rmk}
The algebraic group $\levi$ arises as a Levi subgroup of $G^0/\Q_p$. More precisely,  it is the Levi subgroup 
associated with the partitions of $n$  defined by the multiplicities $\{m_{s}^\tau,\dots , m_0^\tau\}_{\tau\in\T}$ of the slopes of $N^{\muord}(\CD)$ (here, $m_t^\tau:=m_t^{\co_\tau}$, $s:=s_{\co_\tau}$).  
\end{rmk}

More explicitly,
the decomposition of
$N^{\muord}(\CD)$ as a sum of the subcrystals $N(\co,n,\cf)$, $\co\in\FO$, induces the decomposition
$$\levi=\prod_{\co\in\FO_0} \levi(\co),$$ 
where, for each $\co\in\FO_0$, ${\levi}(\co)$ is the algebraic group of automorphisms of the $\CO_{F,u_\co}$-isocrystal $N(\co,n,\cf)[\frac{1}{p}]$, for $\co\neq\co^*$, and of the polarized $\CO_{F,u_\co}$-isocrystal $N(\co,n,\cf)[\frac{1}{p}]$, for $\co=\co^*$.

Following \cite[Lemma 1.3.11]{moonen}, if we write $\kappa(\co)=\kappa(u_\co)$ for the residue field of $F_{u_\co}$, then for $\co\neq \co^*$, we have 
$$\levi (\co)(\Q_p)= GL_{m_s^\co}(W(\kappa(\co)))\times\cdots \times  \GL_{m_0^\co}(W(\kappa(\co))),$$
and for $\co=\co^*$, assuming $e/2$ is not a slope, we have that the number of slopes $s+1$ is even and
$$\levi (\co)(\Q_p)= GL_{m_s^\co}(W(\kappa(\co)))\times\cdots \times  \GL_{m_{\frac{s+1}{2}}^\co}(W(\kappa(\co))).$$
As alluded to in the first paragraph of Section \ref{mr-section}, for notational convenience, {\bf we exclude the slope $e/2$, but we expect no mathematical issues extending to this case.}

\begin{rmk}\label{details}

Note that $\levi$ is defined over $\Z_p$, while $\Levi$ is defined over $\CO_{E,\p}$ (as in Section \ref{autweight}).  It follows from the definitions of these groups that after base change, the group $\levi$ is contained in $\Levi$, with equality exclusively when the $\mu$-ordinary polygon is ordinary (i.e., when $\CO_{E,\p}=\ZZ_p$).

More explicitly, let $F_1>\cdots > F_{s}$ denote the distinct values of $\cf(\tau)$, for $\tau\in\co$, in the interval $[1,n-1]$ ($s=s_\co\geq 0$). For convenience, we also write $F_0:=n$ and $F_{s+1}:=0$. 
For each $i=0, \dots, s+1$, we define \[d_i:=\{\tau\in\co|\cf(\tau)=F_i\}.\] Note that $d_i>0$ for all $i=1,\dots, s$, and $d_0, d_{s+1}\geq 0$. Note that $e=e_\co=\sum_{i=0}^{s+1} d_i$.

With this notation, the distinct slopes of the $\mu$-ordinary polygon $\nu_\co(n, \cf)$, associated with the orbit $\co$, are 
\[\lambda_i=\sum_{j=0}^id_j,\]
where each $\lambda_i$ occurs in $\nu_\co$ with multiplicity $m_i=F_i-F_{i+1}$, $i=0,\dots ,s$. 

Note that, for each $\tau\in\co$, $\cf(\tau)=\sum_{i_\tau}^s m_j$, where the integer $i_\tau$, $0\leq i_\tau\leq s_\tau$, is defined by the condition $\cf(\tau)=F_{i_\tau}$.
\end{rmk}

\begin{defi}
We define $\para$ to be the parabolic subgroup of $\Levi$ that contains the chosen Borel subgroup $\Borel$ and has Levi subgroup $\levi$ associated with the ordering on the partitions of $n$ defined by the decreasing ordering of the slopes of $N^{\muord}(\CD)$.  We write $\uni$ for the unipotent radical of $\para$.
\end{defi}

\begin{rmk}\label{rmk-on-levi}
One can choose a basis as in Section \ref{autweight} that (in addition to the identifications in Section \ref{autweight}) identifies $\para$ with a subgroup of block upper triangular matrices of $\prod_{\tau\in\T}\GL_{a_\tau^+}$ and $\levi$ with a subgroup of block-diagonal matrices.  
\end{rmk}

We define $\borel:=\Borel\cap\levi$ and $\nilp:=\Nilp\cap\levi$. Under our assumptions, $\borel$ is a Borel subgroup of $\levi$, $\nilp$ its unipotent radical, and  the maximal torus $\torus$ of $\levi$ in $\borel$ is also the maximal torus $\Torus$ of $\Levi$ contained in $\Borel$.

\begin{rmk}
If a weight $\kappa$ of $\Torus=\torus$ is dominant in $X^*(\Torus)$, then it is also dominant in $X^*(\torus)$, but the converse does not hold in general.
\end{rmk}

\section{The $\mu$-ordinary Igusa Tower}\label{tower-section}
In this section, we introduce basic details of the $\mu$-ordinary Igusa tower over the $\mu$-ordinary locus, building on \cite{moonen, mantovan1}.  By \cite{wedhorn}, assuming $p$ is unramified in the reflex field $E$, the $\mu$-ordinary locus is always nonempty.  In the case where the ordinary locus is nonempty (i.e., when $p$ splits completely in $E$), the $\mu$-ordinary Igusa tower coincides with Hida's ordinary Igusa tower.  
\subsection{The $\mu$-ordinary slope filtration} 
Let $H$ be a $\CD$-enriched \BT group over a smooth $\F$-scheme $S$ (e.g., $H=\CA[p^\infty]$ for $\CA$ the universal $\CD$-enriched abelian scheme over $\sh$).  
The $\CD$-structure on $H$ induces a decomposition of $H$ according to the primes $u$ of $F$ above $p$ and Morita equivalence (via $\CO_B\otimes_{\CO_F}\CO_{F,u}\simeq M_r(\CO_{F,u})$).  That is, we have 
$$H=\bigoplus_{u|p} H[u^\infty],$$
and for each $u|p$, we have
$$ H[u^\infty]=G(u)^{\oplus r},$$
where  $G(u)$ is a \BT $\CO_{F,u}$-module  (\cite[Sections 3.1.2, and 4.1.3]{moonen}). More precisely, for each $u|p$, the $\CD$-structure of $H$ induces a structure of $\CO_{F,u}$-modules on $G(u)$, together with an  isomorphism between $G(u)$ and the Cartier dual of $G({u}^*)$, for $u\neq u^*$,  and a structure of \BT $(\CO_{F,u},*)$-modules on $G(u)$, for $u=u^*$. 
If we write the Dieudonn\'e crystal of $H$  as $\bD(H)=\oplus_{\tau\in \T} M_\tau^{\oplus r}$, then for each prime $u|p$,  the Dieudonn\'e crystal of $G(u)$ is the subcrystal $M_\co=\oplus_{\tau\in\co} M_\tau$ of $\bD(H)$, for $\co=\co_u$.

In \cite[Theorem 7]{zink}, Zink proves that any \BT group over a regular scheme with constant Newton polygon is isogenous to a completely slope divisible \BT group, i.e., to a \BT group that has a slope filtration with slope divisible quotients (see \cite[Definition 10]{zink}).  
In the case of the universal \BT group over central leaves $C$ in Oort's foliation, Zink's result can be strengthened to prove that the restriction of the universal \BT group $\CA[p^\infty]$ to $C$ is slope divisible.

\begin{prop}\cite[Section 3]{mantovan1} (see also \cite[Section 3.2.3]{mantovanA})\label{csd}
Let $C\subset \sh$ be a central Oort's leaf, i.e., an Oort's leaf associated with a completely slope divisible \BT group. Then the restriction to $C$ of the universal \BT group $\CA[p^\infty]$  is completely slope divisible.
\end{prop}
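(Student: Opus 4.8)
The plan is to reduce to Zink's theorem recalled above and then promote ``isogenous to a completely slope divisible \BT group'' to ``completely slope divisible'' by exploiting that $C$ is a leaf, not merely a Newton stratum. Write $H:=\CA[p^\infty]\times_{\sh}C$ for the restriction of the universal \BT group, and let $H_0$ be the completely slope divisible \BT group over $\F$ with which $C$ is associated, so that every geometric fibre of $H$ is isomorphic, as a $\CD$-enriched \BT group, to $H_0$. Let $0=H_0^{(0)}\subset H_0^{(1)}\subset\cdots\subset H_0^{(m)}=H_0$ be the slope filtration of $H_0$, with isoclinic graded pieces of slopes $\lambda_1>\cdots>\lambda_m$, and fix a common integer $r\geq 1$ such that each graded piece $H_0^{(i)}/H_0^{(i-1)}$ is slope divisible with parameter $r$, i.e.\ $r\lambda_i\in\ZZ$ and the $r$-th iterate of relative Frobenius on it is divisible by $p^{r\lambda_i}$ with invertible quotient (cf.\ \cite[Definition 10]{zink}).

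First I would observe that $C$ is smooth over $\F$ (being a leaf, by Oort), hence regular, and that the Newton polygon of $H$ is constant on $C$, equal to that of $H_0$, because all geometric fibres of $H$ are isomorphic. Thus Zink's theorem applies and furnishes a completely slope divisible \BT group $Y/C$, with slope filtration $0=Y_0\subset\cdots\subset Y_m=Y$ whose graded pieces $Y_i/Y_{i-1}$ are isoclinic of slope $\lambda_i$ (isogenies preserve Newton polygons), together with an isogeny which, after composing with a quasi-inverse isogeny if necessary, we may take to be an isogeny $f\colon Y\to H$. I would then transport the filtration along $f$ by setting $H^{(i)}:=f(Y_i)$, the image as an fppf sheaf. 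Since $f$ is an isogeny, $H^{(i)}=Y_i/(Y_i\cap\ker f)$ is a \BT subgroup of $H$ (a quotient of the \BT group $Y_i$ by a finite flat subgroup scheme), with $H^{(0)}=0$ and $H^{(m)}=H$, and the induced map $Y_i/Y_{i-1}\to H^{(i)}/H^{(i-1)}$ is an isogeny, so $H^{(i)}/H^{(i-1)}$ is a \BT group, isoclinic of slope $\lambda_i$. Hence $H^{(\bullet)}$ is a filtration of $H$ of the shape of a slope filtration, and it remains only to show the graded pieces are slope divisible.

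This last step is where the leaf hypothesis, rather than merely constancy of the Newton polygon, is used, and I expect it to be the crux. By uniqueness of the slope filtration, for each geometric point $x$ of $C$ the fibre $(H^{(\bullet)})_x$ is the slope filtration of $H_x\cong H_0$, so $(H^{(i)}/H^{(i-1)})_x\cong H_0^{(i)}/H_0^{(i-1)}$; in particular each graded piece $H^{(i)}/H^{(i-1)}$ is slope divisible with parameter $r$ on every geometric fibre of $C$. Since $C$ is reduced, a standard rigidity argument for homomorphisms of \BT groups over a reduced base (the relevant statement being that the $r$-th iterate of relative Frobenius of $H^{(i)}/H^{(i-1)}$, which on each geometric fibre is divisible by $p^{r\lambda_i}$ with invertible quotient, has the same property over all of $C$, in the style of Katz's theory of slope filtrations of $F$-crystals) shows that $H^{(i)}/H^{(i-1)}$ is slope divisible with parameter $r$. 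Therefore $H$, equipped with $H^{(\bullet)}$, is completely slope divisible, which is the assertion.

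The principal obstacle is precisely this passage from an isogeny statement to an isomorphism statement: Zink's theorem alone does not suffice, because slope divisibility of the graded pieces is not an isogeny invariant, and the argument rests on combining the uniqueness of the slope filtration, the constancy of the isomorphism class of the fibres along the leaf (which is what provides a uniform parameter $r$), and rigidity of fibrewise isomorphisms of \BT groups over a reduced base. As an alternative to the second paragraph, one could instead invoke the Oort--Zink theorem on the existence of slope filtrations for \BT groups with constant Newton polygon over a normal base --- applicable since $C$ is smooth --- to produce $H^{(\bullet)}$ on $H$ directly, and then conclude slope divisibility of the graded pieces exactly as in the third paragraph.
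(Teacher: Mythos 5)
The paper itself gives no proof of this proposition; it delegates entirely to \cite[Section 3]{mantovan1} and \cite[Section 3.2.3]{mantovanA}, so there is no argument in the present text to compare against. Your overall strategy --- Zink's isogeny theorem combined with a fibrewise rigidity argument over the reduced base $C$, exploiting that every geometric fibre of $H$ is isomorphic to the fixed completely slope divisible $H_0$ --- is the right one, and your closing remark correctly identifies that the leaf hypothesis (constancy of the isomorphism class, not just of the Newton polygon) is what supplies the extra rigidity that Zink's theorem alone does not.

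Your primary route, however, has a gap you do not flag. When you set $H^{(i)} := f(Y_i)$, you assert without justification that $Y_i\cap\ker f$ is finite flat. As a closed subgroup scheme of the finite flat $\ker f$ over the reduced base $C$, flatness would follow from constancy of the fibre rank; but that rank equals $\deg\bigl(f_x|_{Y_{i,x}}\bigr)$, the degree of an isogeny from the $i$-th slope filtration step of $Y_x$ onto that of $H_x$, and the degree of an isogeny between two isoclinic \BT groups is not determined by the isomorphism classes of its source and target. Moreover there is no reason for $Y_x$ (unlike $H_x$) to have constant isomorphism type along $C$: $Y$ is only supplied by Zink's theorem. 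So constancy of the fibre rank, hence flatness, is not immediate, and $H^{(\bullet)}$ as you define it is not obviously a filtration by \BT subgroups. Your alternative route is the one to take and avoids this entirely: Oort--Zink over the normal (indeed smooth, hence regular) base $C$ with constant Newton polygon produces the slope filtration $H_\bullet$ directly on $H$, and your rigidity argument (a homomorphism of finite locally free group schemes over a reduced base that vanishes fibrewise vanishes identically, and one that is a fibrewise isomorphism is an isomorphism) then applies to the isoclinic graded pieces $H_i/H_{i-1}$ exactly as you describe.
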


\begin{rmk}
Since the above decomposition of $H$ is canonical,  we deduce that $H$ is isomorphic to $\X^{\muord}(\CD)$ as $\CD$-enriched \BT groups if and only if for each prime $u|p$
the \BT groups $G(u)$ 
 and  $\X(\co_u,n,\cf)$ are isomorphic as $\CO_{F,u}$-modules for $u\neq u^*$ and as $(\CO_{F,u},*)$-modules for $u=u^*$ (\cite[Proposition 2]{mantovan1}).
In particular, for $H=\CA[p^\infty]$, we deduce that for each $u|p$ $\CA[u^\infty]=\G(u)^{\oplus r}$ where $\G(u)$ is an $\CO_{F,u}$-module whose restriction  to any central Oort's leaf is completely slope divisible.
\end{rmk}

Let $\cS:=\Sh_\infty^{\muord}$  denote the formal $\mu$-ordinary locus over $\W$, and $\bS:=\sh^{\muord}$ the $\mu$-ordinary locus of $\sh$ over ${\mathbb F}$ (see section \ref{HasseGN} for definitions). 
We apply the above result to the case of $C=\bS$.
For each $u|p$, we write $\G(u)_\bullet$ for the  slope filtration of $\G(u)$ over $\bS$, and $\gr(\G(u))$ for the \BT $\CO_{F_u}$-module defined as the direct sum of the associated subquotients. Similarly, we write $\CA[p^\infty]_\bullet$ for the slope filtration of $\CA[p^\infty]$ over $\bS$ and  $\gr(\CA[p^\infty])$ for the direct sum of its subquotients. Thus, $\gr(\CA[p^\infty])=\oplus_{u|p} \gr(\G(u))^{\oplus r}$. 
Note that if $u\neq u^*$, then the polarization induces an isomorphism of $\CO_{F_u}$-modules  between $\gr(\G(u))$ and $\gr(\G(u^*))$.
For  $u=u^*$, the \BT group $\gr(\G(u))$ is a polarized $(\CO_{F_u},*)$-module, which arises as the direct sum of pairs of dual isoclinic $\CO_{F_u}$-modules (namely, the two subquotients of $\G(u)$ of slope $\lambda, 1-\lambda$, for $\lambda\neq 1/2$) and (possibly) of a polarized isoclinic $(\CO_{F_u},*)$-module of slope $1/2$.
Then $\gr(\CA[p^\infty])$ is a naturally $\CD$-enriched \BT group, and for each $u|p$ $\gr(\CA[u^\infty])=\gr(\G(u))^{\oplus r}$.

\begin{rmk}
It follows from the fact that the slope filtration is canonically split over perfect fields, that at all geometric points $x$ of $\bS$, the $\CD$-enriched \BT groups  $\gr(\CA[p^\infty])_x$ and  $\CA[p^\infty]_x$  are canonically isomorphic. In particular, $\gr(\CA[p^\infty])_x$ is  isomorphic to $\X^{\muord}(\CD)$ for 
all points $x$ of $\bS$.
\end{rmk}

\begin{prop}\label{slopefiltration}
Maintain the above notation.
The slope filtration of $\CA[p^\infty]$ over $\bS$ canonically lifts to $\cS$. 
\end{prop}
\begin{proof}
The lifting of the slope filtration to $\cS$ is a consequence of \cite[Proposition 2.1.9]{moonen} given the existence of the filtration on $\bS$ (Proposition \ref{csd}). 
\end{proof}

\begin{rmk}
It follows from the rigidity of isoclinic $\CD$-enriched \BT groups (\cite[Proposition 2.1.5]{moonen}) that $\gr(\CA[p^\infty])$ over $\cS$ is naturally a $\CD$-enriched \BT group. Furthermore, for all geometric points $x\in \bS$, the $\CD$-enriched \BT group $\gr(\CA[p^\infty])$ restricted to $\cS^\wedge_{x}$, the formal completion of $\cS$ at $x$, is isomorphic to $\X^{can}$.
\end{rmk}

\subsection{$\mu$-ordinary Igusa tower}
Maintaining the above notation, we introduce the formal $\mu$-ordinary Igusa tower $\Igmu$ as a profinite \'etale cover of the formal $\mu$-ordinary locus $\cS$.
Let $\gr(\CA[p^\infty])/\cS$ be the $\CD$-enriched \BT group introduce above.
In the following, we write $\gr(\CA[p^n])=\gr(\CA[p^\infty])[p^n]$, for all $n\geq 1$.

\begin{prop}
For each $m,n\geq 1$, we define $\Igmunm$, the Igusa cover of level $n$ over $\cS_m$, to be the $\W_m$-scheme $${\Igmunm}:={\underline{\Isom}}_{\CD}(\X^{\can}(\CD)[p^n], \gr(\CA[p^n]/\cS_m)).$$

The space $\Igmunm$ is a finite \'etale cover of $\cS_m$ with Galois group $\levi(\Z/p^n\Z)$.
\end{prop}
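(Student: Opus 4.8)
The plan is to imitate the construction of Hida's ordinary Igusa tower, using the $\mu$-ordinary $\CD$-enriched \BT group $\X^{\can}(\CD)$ as the "model" object whose automorphisms furnish the Galois group. First I would check that the functor $\Igmunm$ is representable by a scheme affine over $\cS_m$: the functor $T\mapsto \underline{\Isom}_\CD(\X^{\can}(\CD)[p^n], \gr(\CA[p^n])/T)$ is a closed subfunctor of the affine-over-$\cS_m$ scheme $\underline{\Hom}_\CD(\X^{\can}(\CD)[p^n], \gr(\CA[p^n]))$ cut out by the conditions that a homomorphism of finite flat group schemes with $\CD$-structure be an isomorphism (a closed condition on the base, since the source and target are finite flat of the same order over $\cS_m$), so representability is standard. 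Here one uses Proposition \ref{slopefiltration} and the remark following it, which guarantee that $\gr(\CA[p^\infty])$ is a genuine $\CD$-enriched \BT group over all of $\cS$ (and hence over each $\cS_m$), so that $\gr(\CA[p^n])$ is finite locally free over $\cS_m$ with $\CD$-action, making the comparison with the constant object $\X^{\can}(\CD)[p^n]$ meaningful.

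Next I would establish that $\Igmunm\to\cS_m$ is finite étale and a torsor under $\levi(\Z/p^n\Z)$. The key input is that $\gr(\CA[p^\infty])$ over $\cS$ is fiberwise isomorphic to $\X^{\can}(\CD)$: by the remark after Proposition \ref{slopefiltration}, for every geometric point $x$ of $\bS$ the restriction of $\gr(\CA[p^\infty])$ to the formal completion $\cS^\wedge_x$ is isomorphic to $\X^{\can}$, and in particular the geometric fibers of $\gr(\CA[p^n])$ are isomorphic (as $\CD$-enriched truncated \BT groups) to $\X^{\can}(\CD)[p^n]$. Therefore the $\Isom$-functor has nonempty geometric fibers everywhere, so it is faithfully flat over $\cS_m$; combined with the fact that $\X^{\can}(\CD)[p^n]$ and $\gr(\CA[p^n])$ are étale-locally isomorphic after the requisite base change (they become isomorphic on the cover $\Igmunm$ itself, the tautological property of an $\Isom$-scheme), a descent argument shows $\Igmunm\to\cS_m$ is a torsor under the group scheme $\underline{\Aut}_\CD(\X^{\can}(\CD)[p^n])$. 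Finally one identifies this automorphism group scheme: since $\X^{\can}(\CD)$ is the canonical lift and $\levi$ is by definition (Section \ref{allsubgr}) the automorphism group of the $\CD$-enriched isocrystal $N^{\muord}(\CD)[\frac1p]$, with the explicit product-of-$\GL_m$ description recalled there, the automorphisms of the level-$n$ truncation are exactly $\levi(\Z/p^n\Z)$ (using that the canonical lift's endomorphisms are the reductions of the characteristic-zero ones, via Proposition \ref{canonicallift}). A torsor under a finite constant group is finite étale, giving the claim.

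I would organize the étaleness more carefully as follows: formal smoothness (equivalently, the infinitesimal lifting criterion) of $\Igmunm\to\cS_m$ follows from the rigidity/deformation theory of $\CD$-enriched truncated \BT groups — an isomorphism $\X^{\can}(\CD)[p^n]\isomto\gr(\CA[p^n])$ over an Artinian thickening lifts uniquely once it is known the two group schemes remain abstractly isomorphic, and the obstruction vanishes because both sides are pulled back from, respectively, a constant object and an object that is étale-locally constant. Unramifiedness (discreteness of fibers) is immediate from rigidity of isomorphisms between truncated \BT groups over a fixed base. Quasi-finiteness plus properness (the $\Isom$-scheme is closed in a finite scheme over $\cS_m$, hence finite) then upgrades étale to finite étale.

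The main obstacle I anticipate is not the torsor formalism but pinning down precisely why $\underline{\Aut}_\CD(\X^{\can}(\CD)[p^n])$ equals the constant group scheme $\levi(\Z/p^n\Z)$ over $\cS_m$ — one must use that $\X^{\can}(\CD)$ is defined over $W(\ck)$ (so all its relevant automorphisms are already defined over the base and form a constant sheaf), that passing to the $p^n$-torsion does not shrink the automorphism group (here the isoclinic/slope-divisible structure recalled in Section on the $\mu$-ordinary \BT group is essential, since automorphisms must preserve the slope filtration and act block-diagonally by copies of $\GL_{m_i}$ over the appropriate Witt rings), and that the group of self-quasi-isogenies reducing to automorphisms of the integral $p^n$-torsion is exactly $\levi(\Z/p^n\Z)$. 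Once this identification is in hand, the finite étale torsor structure with the stated Galois group follows from the general theory; when $p$ splits completely in $E$ one recovers Hida's ordinary Igusa tower with $\levi=\Levi=\prod_\tau\GL_{a_\tau^+}$ acting, as a consistency check.
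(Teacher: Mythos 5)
Your proof takes a genuinely different route from the paper's. The paper's own proof is essentially a citation: it invokes \cite[Proposition 4]{mantovan1} for the $m=1$ case (the special fiber), states that "a similar proof applies for all $m,n\geq 1$", and then observes that $\Igmunm$ is pinned down as the unique finite \'etale cover of $\cS_m$ with prescribed reduced fiber over $\cS_1=\bS$ --- i.e., the lift from $\bS$ to $\cS_m$ is handled by the topological invariance of the \'etale site under infinitesimal thickenings. You instead reconstruct a self-contained argument from scratch: representability of the $\Isom$-functor, \'etaleness via deformation theory, torsor structure under $\underline{\Aut}_\CD(\X^{\can}(\CD)[p^n])$, and identification of this group with $\levi(\Z/p^n\Z)$. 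What each approach buys: the paper gets a two-line proof by delegating the hard work to the cited earlier result; your version makes the geometric mechanism explicit and does not require the reader to consult \cite{mantovan1}, at the cost of having to verify several nontrivial facts about finite flat group schemes and their automorphism functors.

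Two technical points in your write-up are imprecise, though neither is fatal. First, the assertion that $\Isom$ is a \emph{closed} subfunctor of the $\Hom$-scheme is shaky: for morphisms between finite flat group schemes of equal rank, being an isomorphism is naturally an \emph{open} condition (nonvanishing of a determinant on the Hopf algebras), so the closedness used later ("closed in a finite scheme over $\cS_m$, hence finite") needs a different justification --- the cleanest is to get finiteness from the torsor structure under the finite constant group $\levi(\Z/p^n\Z)$, rather than from representability alone. Second, the stated reason for unobstructedness of lifting an isomorphism over an Artinian thickening ("both sides are pulled back from... an object that is \'etale-locally constant") is not quite right once one is over $\cS_m$ with $m\geq 2$, where $\gr(\CA[p^n])$ is genuinely a lift and not a pullback from $\bS$. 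The correct input --- which you do also invoke --- is the rigidity of the isoclinic pieces and of the canonical lift (Moonen's \cite[Propositions 2.1.5, 2.1.9, and 2.3.12]{moonen}), which gives unique lifting of homomorphisms between such objects; that, not pullback, is what makes the $\Isom$-scheme formally \'etale over $\cS_m$. With these two points tightened, your argument is a valid alternative to the paper's citation-based proof.
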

\begin{proof}
For  $m=1, n\geq 1$ the statement is proved in \cite[Proposition 4]{mantovan1}.  A similar proof applies for all $m, n\geq 1$. 
Indeed, for each $m\geq 1$, the $\W_m$-scheme $\Igmunm$ (respectively, the formal $\W$-scheme $(\Igmu)_n$) is the unique finite \'etale cover of $\cS_m$ (respectively, of $\cS$) with reduced fiber $(\Igmu)_{n,1}/\cS_1=\bS$.
\end{proof}

\subsection{Irreducibility of the Igusa tower} A key result in Hida's theory is the irreducibility of the Igusa tower.  To be exact, the Igusa tower is not irreducible, but rather, Hida's result describes the (many) irreducible components of the pullback of the Igusa tower over any connected component of the ordinary locus and can be adapted to do the same for the $\mu$-ordinary locus.  We follow \cite{hidaIRR}.

Fix a connected component of the $\mu$-ordinary.  By abuse of notation, we still denote it by $\cS$, and the pullback of the Igusa tower by $\Ig_\mu$.

For each $n,m\geq 1$, we define 
\[\det:\Igmunm\to \underline{\Isom}_\CD(\wedge^{\rm top} \X[p^n],\wedge^{\rm top} \CA[p^n]/\cS_m)\cong \left(\CO_{B,p}/p^n\CO_{B,p}\right)^\times,\]
where the latter isomorphism follows from the fact that the sheaf $\wedge^{\rm top} \CA[p^n]/\cS_m$ is constant (see \cite[Section 3.3]{hidaIRR}, and also \cite{miaofen}, which gives a notion of a top exterior power for Barsotti--Tate groups corresponding to the top exterior power of the associated modules).
We define $\Ig_\mu^{SU}$ to be the pullback of $1\in\CO_{B,p}^\times $.

\begin{defi}(\cite[Definition 1.1]{Zong}) 
A $\CD$-enriched abelian variety $A$ over an algebraically closed field $\F$ of characteristic $p$ is hypersymmetric if 
\[ {\rm End}^0_\CD (A)\otimes_\Q\Q_p =  {\rm End}_\CD \left(H^1_{dR}(A)[\frac{1}{p}]\right).\]
A point $x$ of $\sh$ is called hypersymmetric if $\CA_x$ is hypersymmetric.
\end{defi}

\begin{prop}\label{irr}
Maintain the above notation.  Assume there exists a hypersymmetric point which is $\mu$-ordinary.

Then $\Ig_\mu^{SU}\rightarrow \cS$ is a geometrically irreducible component of $\Ig_\mu$.
\end{prop}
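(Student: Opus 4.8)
The plan is to follow Hida's strategy for the irreducibility of the ordinary Igusa tower, as presented in \cite{hidaIRR}, adapting it to the $\mu$-ordinary setting via the group $\levi$ and the canonical BT group $\X^{\can}(\CD)$. The key structural point is that $\Ig_\mu^{SU}\to \cS$ is a finite \'etale Galois cover, say with group $H^{SU}\subset \levi(\Z_p)$ (the ``special unitary'' version of $\levi(\Z_p)$, namely the kernel of the determinant map to $(\CO_{B,p})^\times$), once we pass to the profinite limit in $n$. Proving geometric irreducibility of $\Ig_\mu^{SU}$ over the connected (hence geometrically connected, after fixing a geometric point) base $\cS$ amounts to showing that the monodromy representation $\pi_1(\cS_{\bar\F},\bar x)\to H^{SU}$ is surjective; equivalently, that the image of monodromy is all of $H^{SU}$. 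So the first step is to set up this monodromy formalism carefully, identifying $\mathrm{Gal}(\Ig_\mu^{SU}/\cS)$ with $H^{SU}$ and reducing irreducibility to surjectivity of monodromy.

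\textbf{Main steps.} First I would record that $\levi=\prod_{\co\in\FO_0}\levi(\co)$ with each $\levi(\co)$ a product of general linear groups over unramified extensions of $\Q_p$ (from Section \ref{allsubgr}), so that $H^{SU}$ is, up to the determinant constraint, a product of groups $\GL_{m}(W(\kappa(\co)))$; the target of monodromy is a product over orbits and over slopes. Second, I would invoke the hypersymmetric hypothesis: at a hypersymmetric $\mu$-ordinary point $x$, we have $\End^0_\CD(\CA_x)\otimes_\Q \Q_p = \End_\CD(H^1_{dR}(\CA_x)[\tfrac 1p])$, and since $\CA_x$ is $\mu$-ordinary, the latter endomorphism algebra is precisely $\levi(\Q_p)$ (it is the algebra of self-quasi-isogenies of $\X^{\muord}(\CD)$, by the very definition of $\levi$). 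Thus the $\ell$-adic/$p$-adic monodromy coming from the abelian variety fills up a large subgroup; more precisely, at the hypersymmetric point the Hecke action at $p$ produces, inside monodromy, the image of the full group $\levi(\Q_p)$ (or its integral points), as in Chai--Oort-style arguments and in \cite{hidaIRR, Zong}. Third, I would combine this local (at the point $x$) statement with Zariski density / irreducibility of the base $\cS$ to propagate the large-monodromy conclusion from a neighborhood of $x$ to the whole connected component, using that the monodromy group is a closed subgroup stable under the relevant symmetries and that it surjects onto each factor; finally, intersecting with the determinant-one condition gives surjectivity onto $H^{SU}$, hence geometric irreducibility of $\Ig_\mu^{SU}$.

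\textbf{Main obstacle.} The hard part will be the third step: showing that the monodromy image, which one controls well only near the hypersymmetric point, is in fact the full group $H^{SU}$ globally over the connected component $\cS$. In the ordinary case this is exactly the content of Hida's irreducibility theorem and rests on a delicate interplay between the Hecke correspondences away from $p$ (which act transitively on geometric components, so monodromy is ``big'' in the prime-to-$p$ directions) and the structure of the $U_p$-correspondences (which govern the $p$-direction of the Igusa tower). In the $\mu$-ordinary setting one must check that the parabolic-versus-Levi subtlety flagged in Section \ref{challenges-section} does not shrink the monodromy group, i.e., that the relevant automorphism group acting is genuinely $\levi$ and not a proper subgroup, and that the splitting of the slope filtration over perfect fields is compatible with the Hecke action used to move points around. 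The hypersymmetric hypothesis is precisely what guarantees the endomorphism algebra is maximal (equal to $\End_\CD$ of the isocrystal), which is what forces monodromy to be as large as possible; without it, the monodromy could a priori be a proper subgroup. I expect the proof to cite \cite{hidaIRR} and \cite{Zong} for the technical heart (transitivity of Hecke action on components and the large-monodromy statement at hypersymmetric points), with the new content being the bookkeeping that translates those results through the $\mu$-ordinary Igusa tower construction of \cite{mantovan1} and the identification $\levi(\Q_p)=\mathrm{Aut}^0_\CD(\X/\F)$.
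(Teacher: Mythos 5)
Your proposal follows essentially the same route as the paper: the paper's proof is a one-paragraph citation to Hida's irreducibility theorem \cite[Definition 4.20, Theorem 3.1, Section 3.5]{hidaIRR}, asserting that Hida's argument (monodromy surjectivity via hypersymmetric points) applies verbatim once ordinary hypersymmetric points are replaced by $\mu$-ordinary ones. You have correctly reconstructed the content of the cited argument (identifying the Galois group with $H^{SU}\subset\levi(\Z_p)$, using hypersymmetry to force the endomorphism algebra to equal $\levi(\Q_p)$, and propagating large monodromy from the hypersymmetric point), so your sketch is an accurate expansion of what the paper delegates to \cite{hidaIRR}.
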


\begin{proof}
In \cite[Definition 4.20, and Theorem 3.1]{hidaIRR}, Hida proves the irreducibility of the ordinary Igusa tower over unitary Shimura varieties. The argument given in {\it loc. cit.} relies on the existence of ordinary hypersymmetric points (\cite[Section 3.5]{hidaIRR}), and it applies as is to our setting, with the role of ordinary hypersymmetric points replaced by hypersymmetric points on the $\mu$-ordinary locus.  
\end{proof}

\begin{rmk}
In \cite[Theorem 5.1]{Zong}, Zong gives necessary and sufficient conditions for the existence of hypersymmetric points on (each connected component of) Newton polygon strata of PEL-type Shimura varieties.  In \cite{Xiao}, Xiao checks that these conditions are satisfied by the $\mu$-ordinary stratum of unitary Shimura varieties when the degree of the primes $v$ above $p$ in $E$ is constant (e.g., for $p$ inert in $E$).  Note that \cite[Proposition 2.3.12]{moonen} implies the existence of hypersymmetric points when $p$ is inert in $F$.
\end{rmk}

\begin{rmk}\label{trans}
Recall our assumption that the subgroup $\torus(\Z_p)\subseteq\levi(\Z_p)$ acts transitively on the set of connected components of $\Ig_\mu$.
It follows from Proposition \ref{irr} that the connected components of $\Ig_\mu$ are precisely the fibers of the morphism $\det:\Ig_\mu\to \CO_{B,p}^\times$.
\end{rmk}
In the following, for simplicity, we write $\Ig:=\Igmu$.

\section{$p$-adic Automorphic forms and Congruences in the $\mu$-ordinary setting}\label{congruences-section}

The goal of this section is to explore to what extent we can realize classical and $p$-adic automorphic forms as global functions over the $\mu$-ordinary Igusa tower.

\subsection{$p$-adic automorphic forms over the $\mu$-ordinary Igusa tower}
Similarly to \cite[Section 8.1.1]{Hida}, which addresses the ordinary setting, we define the space of $p$-adic global functions on the Igusa tower 
$$V:=\varprojlim_m\varinjlim_n V_{n,m}$$ where for each $n,m \geq 1$, $V_{n,m}:=H^0\left(\Ig_{n,m}, \CO_{\Ig_{n,m}}\right).$
The natural right action of $\levi\left(\ZZ_p\right)$ on the Igusa tower defines a left action on $\Vmu$.

 \begin{defi}\label{omoit-defn}
We define the space of {\it $p$-adic automorphic forms over the $\mu$-ordinary Igusa tower} (abbreviated to {\it $p$-adic automorphic forms OMOIT}) to be $$V^{\nilp (\ZZ_p)}\subset V.$$ (Recall $\nilp$ is the unipotent radical of our choice of a Borel subgroup $\borel$ of of $\levi$.) 
\end{defi}

In the following, we simply write $\Vmu:=V^{\nilp (\ZZ_p)}$. 

\begin{rmk}
Note that when the ordinary locus is nonempty, Definition \ref{omoit-defn} agrees with Hida's definition in \cite{Hida} of the space of $p$-adic automorphic forms.
\end{rmk}

\subsection{$p$-adic forms over the $\mu$-ordinary locus}\label{UO}

We maintain the notation of Section \ref{autsheaf}.

Over the formal $\mu$-ordinary locus $\cS$ over $\W$, we write $\omega_\bullet$ for the sheaf $\omega:=\omega (\CA)$, for $\CA$ the universal abelian scheme over $\cS$, endowed with the filtration induced by the slope filtration of $\CA[p^\infty]$. We define the locally free sheaf \[\uo:=\gr(\omega_\bullet).\]   
The $\CD$-structures on $\CA$ induce a canonical decomposition $\uo=\oplus_{\tau\in\T} \uo_\tau^{\oplus r}$, where for each $\tau\in\T$, \[\uo_\tau=\oplus_{t=0}^{s_\tau} \gr^t(\omega_\tau).\]

\begin{rmk} It follows from the explicit description of the Dieudonn\'e crystal of the universal deformation of a $\mu$-ordinary \BT module over $\F$ (\cite[Section 2.1.7, and Proposition 2.1.9]{moonen}) that, for each $\tau\in\T$ and $t=0,\dots, s_\tau$, 
the sheaves $\gr^t(\omega_\tau)$ are locally free of rank $m^\tau_t$ for $t\geq i_\tau$ and vanish otherwise (notations as in Remark \ref{details}; see also the proof of Proposition \ref{localsplit}).
\end{rmk}

In the following, we adapt the classical construction of automorphic sheaves (as in Section \ref{autweight}) to our context, with $\uo$ in place of $\omega$ and $\levi$ in place of $\Levi$.

For each dominant weight ${\kappa}$  of $\levi$, we construct the sheaves $\uo^{\kappa}$ 
$$\uo^{\kappa}:={\mathbb S}_{\kappa} (\uo)$$
over $\cS$.
Alternatively,  we define
\begin{align*}
\Emu:=  \underline{\Isom}_{\CO_{B, (p)}\otimes \CO_\cS}\left(\uo, \CO^{g}_{\cS}\right):=
\prod_{\tau\in \T} \left(\oplus_{t=i_\tau}^{s_\tau} \underline{\Isom}\left(\gr^t(\omega_\tau), \CO^{m^\tau_t}_{\cS, \tau}\right)\right),
\end{align*}
and for any algebraic representation $\left(\varrho, M_\varrho\right)$ of $\levi$, we construct the sheaves 
\begin{align*}
\CE_\mu^\varrho:=\CE_\mu\times^{\levi} M_\varrho
\end{align*}
over $\cS$.
As before, we note that for any dominant weight $\kappa$ of $\levi$ and $\varrho=\varrho_\kappa$ our choice of $\ZZ_p$-lattice of the irreducible representation of $\levi$ of highest weight $\kappa$,  the sheaves $\uo^\kappa$ and $\CE_\mu^\varrho$,are canonically identified.  

\begin{defi}\label{OMOL-defn}
We call the sections of $\CE_\mu^\varrho$ {\it $p$-adic forms (of weight $\varrho$) over the $\mu$-ordinary locus}, or {\it $p$-adic forms (of weight $\varrho$) OMOL}.  Going forward, when the meaning is clear from context, we sometimes drop ``OMOL'' and just say ``$p$-adic automorphic form.''
\end{defi}

\begin{rmk}
When the ordinary locus is nonempty, the $p$-adic forms defined in Definition \ref{OMOL-defn} are the same as the automorphic forms in the vector bundle over the ordinary locus considered in \cite{EFMV} and by Hida in \cite{Hida} (see \cite[Remark 2.4.1]{EFMV}). 
\end{rmk}

\begin{rmk}
We now explain the names {\it OMOL} and {\it OMOIT}.  While the naive approach might be to call our forms {\it $\mu$-ordinary} $p$-adic automorphic forms, that name seems to imply a strong connection with a projector analogous to Hida's ordinary project $e$ formed from powers of the $U_p$-operator.  While such operators will play an important role in our subsequent work building on the present paper, they are not part of this paper.  Simply referring to the space $V^{\nilp (\ZZ_p)}$ as the space of {\it $p$-adic automorphic forms} is not precise enough.  Indeed, it immediately leads to the question of which space of $p$-adic forms we are considering (e.g., the approach of Serre?  Katz?  Hida?).  While our approach builds on Hida's approach, calling it Hida's $p$-adic automorphic forms would imply we might consider an empty set (the ordinary locus), whereas our space is always nonempty.  Thus, we add the abbreviation OMOIT to be clear about the new space we have constructed and note that in the special case in which the ordinary locus is nonempty, we recover Hida's $p$-adic automorphic forms.
\end{rmk}

It follows from the construction of the Igusa tower that for any integers $n,m\geq 1$, with $n\geq m$, the universal Igusa level structure on $\Ig_{n,m}$ induces an $\CO_{B,(p)}$-linear isomorphism
$$\uo_{\Ig_{n,m}}\cong \CO_{\Ig_{n,m}}\otimes_{\W} \omega_{\X^\can},$$
where $\uo_\Ig$ denotes the pullback of $\uo$ to the Igusa tower, and  $\omega_{\X^\can}$ is the module of invariant differential of the $\CD$ enriched \BT group $\X^\can=\X^\can(\CD)$ over $\W$. Given the canonical decomposition $\uo=\oplus_{\tau\in\T}\left(\oplus_{t=0}^{s_\tau}\gr^t(\omega_\tau)\right)$, such an isomorphism is equivalent to the collection of trivializations over $\Ig_{n,m}$
$$\gr^t(\omega_\tau)_{\Ig_{n,m}}\cong \CO_{\Ig_{n,m}}\otimes_{\W} \omega_{\X^\can(\lambda_t),\tau}^{\oplus m^\tau_t},$$ 
for $\tau\in\T$ and $t=i_\tau, \dots , s_\tau$.

Following Hida's theory,  starting from the above trivialization of $\uo$ over the Igusa tower, for all dominant weights $\kappa$ of $\levi$, we construct by Schur functors canonical trivializations of the pullbacks of $\uo^{\kappa}$ over $\Ig_{m,m}$, for all $m\geq1$. Such trivializations, composed with the $\nilp$-equivariant functional $\ell^\kappa:\varrho_\kappa\to \W$ introduced in Section \ref{autweight}, define a morphism on global sections
$$\Psi_{\kappa}:H^0\left(\cS, \uo^{\kappa}\right)\rightarrow \Vmu[\kappa]\subset \Vmu.$$

We define  
 $\Psi:=\oplus_{\kappa}\Psi_{\kappa}$, where $\kappa$ varies among all dominant weights of $\levi$,
$$\Psi:\oplus_{\kappa} H^0\left(\cS, \uo^{\kappa}\right)\rightarrow \Vmu .$$ 

\begin{prop} \label{psi}
Maintain the above notation.
\begin{enumerate} 
\item{For each dominant weight $\kappa$ of $\levi$, 
the map $\Psi_{\kappa}$ is injective.}
\item{The map $\Psi$ is injective and its image is $p$-adically dense in $\Vmu$.}
\end{enumerate}
\end{prop}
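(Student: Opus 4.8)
The plan is to treat the two assertions in sequence, reducing (2) to (1) plus a density statement. For part (1), fix a dominant weight $\kappa$ of $\levi$ and suppose $\Psi_\kappa(f)=0$ for some $f\in H^0(\cS,\uo^\kappa)$. By construction, $\Psi_\kappa$ is obtained by pulling $f$ back to the Igusa tower, using the tautological trivialization $\uo^\kappa_{\Ig_{m,m}}\cong\CO_{\Ig_{m,m}}\otimes_\W\varrho_\kappa$ coming from the universal Igusa level structure, and then applying the $\nilp$-equivariant functional $\ell^\kappa\colon\varrho_\kappa\to\W$. The key point is that $\ell^\kappa$, being the highest-weight functional of $\varrho_\kappa$ (the analogue of \cite[Definition 2.4.3]{EFMV}), is such that the $\levi(\Z_p)$-translates of $\ell^\kappa$ span the full dual of $\varrho_\kappa$; equivalently, a section of $\varrho_\kappa\otimes\CO_{\Ig}$ that is killed by $\ell^\kappa$ after every translate by $\levi(\Z_p)$ (indeed by $\para(\Z_p)\supseteq\nilp$-orbit closure arguments) must vanish. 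Since the map $\cS\leftarrow\Ig_{m,m}$ is a faithfully flat (finite \'etale) cover with Galois group $\levi(\Z/p^m\Z)$, the $\levi(\Z_p)$-action on $V_{m,m}$ recovers all components of the trivialized section $f$ on $\Ig_{m,m}$. Hence $\Psi_\kappa(f)=0$ and its translates force the trivialized section, and therefore $f$ itself, to be $0$. This gives injectivity of each $\Psi_\kappa$, and hence of $\Psi$ on each isotypic summand.

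For the injectivity of $\Psi$ on the whole direct sum, I would use the $\torus(\Z_p)$-action: $\Psi_\kappa$ lands in the $\kappa$-eigenspace $\Vmu^N[\kappa]$, and distinct dominant weights of $\levi$ give distinct characters of $\torus=\Torus$, so the images of the $\Psi_\kappa$ are linearly independent inside $\Vmu$ (a section of $\Vmu$ decomposes uniquely into $\torus(\Z_p)$-eigencomponents, at least after inverting $p$, which suffices since each $H^0(\cS,\uo^\kappa)$ is $p$-torsion-free). Combined with part (1), this yields injectivity of $\Psi=\oplus_\kappa\Psi_\kappa$.

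The remaining, and main, task is $p$-adic density of the image. Here I would argue modulo $p^m$ for each $m$ and use the finite \'etale structure of the Igusa tower. Fix $m$ and $n\geq m$. Because $\Ig_{n,m}\to\cS_m$ is finite \'etale with Galois group $\levi(\Z/p^n\Z)$, pushing forward $\CO_{\Ig_{n,m}}$ and decomposing the regular representation, $V_{n,m}=H^0(\Ig_{n,m},\CO)$ is, as a $\levi(\Z/p^n\Z)$-module, built from the algebraic representations of $\levi$ reduced mod $p^m$; taking $\nilp(\Z_p)$-invariants and passing to the limit over $n$, every element of $\Vmu^N/p^m$ is, up to this limit, a combination of highest-weight vectors in representations $\varrho_\kappa$, each realized (via descent along the \'etale cover and the trivialization identity $\uo_{\Ig}\cong\CO_{\Ig}\otimes_\W\omega_{\X^\can}$) as the image under $\Psi_\kappa$ of a section of $\uo^\kappa$ over $\cS_m$. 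More precisely, a given $\varphi\in\Vmu^N$ is a function on $\Ig_{n,m}$ invariant under $\nilp(\Z_p)$; averaging $\varphi$ against the various $\torus(\Z_p)$-characters and using that the $\nilp$-invariants of $V_{n,m}\otimes\W/p^m$ are spanned by highest-weight vectors for $\levi$ exhibits $\varphi$ mod $p^m$ as a finite sum of elements in the images $\Psi_\kappa(H^0(\cS,\uo^\kappa))$ mod $p^m$. Hence $\Psi$ has dense image.

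\textbf{Main obstacle.} The delicate step is the density argument: one must know that the $\nilp(\Z_p)$-invariants of $H^0(\Ig_{n,m},\CO)$ are exactly accounted for by the (reductions of) highest-weight vectors in algebraic representations of $\levi$, i.e. that Frobenius-reciprocity/Peter--Weyl for the finite group $\levi(\Z/p^n\Z)$ matches up, under descent along the \'etale cover, with sections of the automorphic sheaves $\uo^\kappa$. In the ordinary case this is Hida's argument in \cite[Section 8.1]{Hida}; here the subtlety is that $\levi$ is only a proper Levi of $\Levi$ and the trivialization is of $\uo=\gr(\omega_\bullet)$ rather than of $\omega$ itself, so one must be careful that the comparison $\uo_{\Ig}\cong\CO_{\Ig}\otimes_\W\omega_{\X^\can}$ is $\levi$-equivariant and compatible with Schur functors. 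Once that compatibility is in hand, the proof reduces to the group-theoretic fact that, mod $p^m$, highest-weight vectors generate the $\nilp$-invariants, together with the faithful flatness of the Igusa tower, which is exactly the input provided by the Propositions above.
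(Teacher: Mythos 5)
Your proposal follows exactly the route the paper takes, since the paper's proof is, in full, ``The proof is similar to [Hida, Theorem 8.3],'' and what you have written is a reasonable expansion of Hida's argument: faithfully flat descent along the finite \'etale Igusa cover plus the spanning of $\varrho_\kappa^\vee$ by $\levi(\Z_p)$-translates of $\ell^\kappa$ for injectivity, the $\torus(\Z_p)$-eigenspace decomposition for injectivity of the direct sum, and a Peter--Weyl/regular-representation argument modulo $p^m$ for density. The ``main obstacle'' you flag -- that the $\nilp(\Z_p)$-invariants of $H^0(\Ig_{n,m},\CO)/p^m$ are accounted for by highest-weight vectors of algebraic representations of $\levi$, compatibly with the $\levi$-equivariant trivialization of $\uo=\gr(\omega_\bullet)$ and with Schur functors -- is indeed the content of Hida's Theorem 8.3, and you correctly identify it as the only genuinely new point to check in passing from $\Levi$ to $\levi$.
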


\begin{proof}
The proof is similar to \cite[Theorem 8.3]{Hida}.
\end{proof}

\subsection{Realizing $p$-adic automorphic forms as $p$-adic forms OMOIT}\label{realization-section}

We compare the $p$-adic automorphic forms OMOL we constructed above with (classical) $p$-adic automorphic forms.

\begin{prop}\label{grade}
The notation remains the same as directly above and the same as in Section \ref{LRrule}.
Let $\kappa$ be a dominant weight of $\Levi$.\begin{enumerate}

\item Each 
$\uni$-stable filtration of $\rho_\kappa\vert_{\para}$ induces a filtration on $\omega^\kappa$.
 The sheaf $\gr((\omega^\kappa)_\bullet)$ is independent of the choice of filtration on $\rho_\kappa\vert_{\para}$. 
\item There is a canonical morphism
$$\gr((\omega^\kappa)_\bullet)\to \bigoplus_{\kappa'\in\M_\kappa} \uo^{\kappa'},$$
which is an isomorphism if $p$ is sufficiently large, or after tensoring with $\Q_p$.
\item 
There is a canonical projection $\pi^\kappa: \omega^\kappa\twoheadrightarrow \uo^\kappa.$

\end{enumerate}
\end{prop}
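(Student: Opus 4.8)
The plan is to translate everything into the language of torsors and associated bundles, so that all three assertions reduce to the representation theory already recorded in Sections~\ref{littlewood-section}--\ref{LRrule}. The filtration $\omega_\bullet$ on $\omega=\omega(\CA)$ over $\cS$ (introduced in Section~\ref{UO}) is a filtration by $\CO_{B,(p)}\otimes\CO_\cS$-subbundles with associated graded $\uo$; trivializing $\omega$ locally on $\cS$ compatibly with $\omega_\bullet$, the transition functions land in $\para$, so the $\Levi$-torsor $\CE=\CE_{\CA/\cS}$ of Section~\ref{autsheaf} acquires a canonical reduction to a $\para$-torsor $\CE_\para$ with $\CE=\CE_\para\times^\para\Levi$, and pushing out along $\para\twoheadrightarrow\levi$ recovers exactly the $\levi$-torsor defining $\uo$ and all its Schur bundles $\uo^{\kappa'}$. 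For any algebraic representation $\rho$ of $\Levi$ the sheaf $\CE^\rho$ is canonically the associated bundle $\CE_\para\times^\para(-)$ of a $\para$-representation built from $\rho\vert_\para$ via the transpose-inverse normalization of Section~\ref{autsheaf}, and the functor $\CE_\para\times^\para(-)$ is exact and commutes with $\gr$ because it is \'etale-locally just $(-)\otimes_\W\CO_\cS$. First I would use this to prove (1): a standard $\uni$-stable filtration of $\rho_\kappa\vert_\para$ is carried by this functor to a filtration of $\omega^\kappa=\CE^{\rho_\kappa}$ by subbundles.

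Next, for (2), let $V_\bullet$ be such a filtration. Since $\uni$ acts trivially on each graded piece, these are representations of $\levi=\para/\uni$, so
\[\gr\bigl((\omega^\kappa)_\bullet\bigr)=\CE_\mu\times^\levi\gr(\rho_\kappa\vert_\para).\]
Here I would invoke the facts recalled in Sections~\ref{littlewood-section} and~\ref{LRrule}: $\gr(\rho_\kappa\vert_\para)$ is independent of the choice of filtration and, with our $W(\ck)$-models, is canonically $\bigoplus_{\kappa'\in\M_\kappa}\varrho_{\kappa'}$ as a representation of $\levi$ over $W(\ck)$. Applying $\CE_\mu\times^\levi(-)$, which commutes with $\gr$ and with finite direct sums and respects canonical identifications, then gives
\[\gr\bigl((\omega^\kappa)_\bullet\bigr)=\bigoplus_{\kappa'\in\M_\kappa}\CE_\mu\times^\levi\varrho_{\kappa'}=\bigoplus_{\kappa'\in\M_\kappa}\uo^{\kappa'},\]
canonically and independently of $V_\bullet$; the transpose-inverse twist enters the same way on both sides and so does not disturb the indexing set $\M_\kappa$.

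For (3) I would first observe that, since $\kappa$ is dominant for $\Levi$, the highest-weight line of $\rho_\kappa$ is one-dimensional and is killed by $\uni$ (using the running assumption $\Borel\subseteq\para$, which forces $\uni\subseteq\Nilp$), and that a standard argument identifies the $\uni$-invariants $\rho_\kappa^{\uni}$ with the irreducible $\levi$-subrepresentation $\varrho_\kappa$: any $\levi$-highest-weight vector of $\rho_\kappa^{\uni}$ is annihilated by $\mathfrak n_\levi$ and by $\uni$, hence by $\mathfrak n_\Levi$, so it lies on the highest-weight line --- and this persists over $W(\ck)$ and is compatible with the summand $\varrho_\kappa$ appearing in (2). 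Thus $\varrho_\kappa\hookrightarrow\rho_\kappa\vert_\para$ is a canonical $\para$-subrepresentation on which $\uni$ acts trivially; dualizing gives a canonical $\para$-equivariant surjection $\rho_\kappa^\vee\vert_\para\twoheadrightarrow\varrho_\kappa^\vee$. Since the transpose-inverse normalization of Section~\ref{autsheaf} realizes $\omega^\kappa$ and $\uo^\kappa$ as the associated bundles of (the restrictions of) $\rho_\kappa^\vee$ and $\varrho_\kappa^\vee$ --- so that the inclusion above becomes a surjection at the level of bundles --- applying $\CE_\para\times^\para(-)$ produces the desired canonical projection $\pi^\kappa\colon\omega^\kappa\twoheadrightarrow\uo^\kappa$. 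Equivalently, $\pi^\kappa$ is the projection onto the summand indexed by $\kappa'=\kappa$ in (2), which occurs with multiplicity one because the $\kappa$-weight space of $\rho_\kappa$ is a line.

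The hard part is bookkeeping rather than algebra. The one genuinely non-formal input --- the decomposition $\gr(\rho_\kappa\vert_\para)\cong\bigoplus_{\kappa'}\varrho_{\kappa'}$ together with its independence of the filtration --- is already in hand from Section~\ref{littlewood-section}, so what remains is to verify that the torsor/associated-bundle dictionary transports it faithfully, and, crucially, that every identification is defined over $\cS$ (equivalently over $\W$) and not merely after inverting $p$; for this one relies on the integral $W(\ck)$-models of the $\rho_\kappa$ and $\varrho_{\kappa'}$ supplied by the Remark in Section~\ref{LRrule}. I expect the main obstacle to be in (3): whether the canonical comparison between $\omega^\kappa$ and $\uo^\kappa$ points forwards (a surjection, as claimed) or backwards (an injection) is dictated delicately by the transpose-inverse convention of Section~\ref{autsheaf}, and one must follow that convention consistently for $\Levi$, for $\para$, and for $\levi$.
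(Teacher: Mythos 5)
Your proof is correct and takes essentially the same route as the paper's, which constructs the $\para$-torsor $\Pmu := \underline{\Isom}_{\CO_{B,(p)}\otimes\CO_\cS}\bigl(\omega_\bullet, (\CO^g_\cS)_\bullet\bigr)$ (your $\CE_\para$), notes $\Pmu\subset\CE\vert_\cS$ and $\Pmu\twoheadrightarrow\Emu$, identifies $\CE^\rho\vert_\cS = \Pmu\times^{\rho\vert_\para}M_\rho$, and reads off (1)--(3) from the representation theory of $\rho_\kappa\vert_\para$ exactly as you do. The one place you are more careful than the paper is part (3): the paper tersely asserts a ``natural projection $\rho_\kappa\vert_\para\twoheadrightarrow\varrho_\kappa$,'' whereas, as you observe, with $\Borel\subseteq\para$ the natural representation-theoretic map is the inclusion $\varrho_\kappa = \rho_\kappa^\uni\hookrightarrow\rho_\kappa\vert_\para$, and it is precisely the transpose-inverse twist in the definition of $\CE^\rho$ that converts this into the claimed surjection of sheaves $\omega^\kappa\twoheadrightarrow\uo^\kappa$.
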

\begin{proof}
Let $\omega_\bullet$ denote the slope filtration on $\cS$, and define  over $\cS$
$$\Pmu:=  \underline{\Isom}_{\CO_{B, (p)}\otimes \CO_\cS}\left(\omega_\bullet, (\CO_{\cS})^{g}_\bullet\right):=
\bigoplus_{\tau\in \T}\underline{\Isom}\left((\omega_\tau)_\bullet, (\CO_{\cS, \tau}^{\oplus \cf(\tau)})_\bullet\right),$$
where the filtration on  $\CO_{\cS, \tau}^{\oplus \cf(\tau)}$ is induced by the ordered partition $\{m_{s_\tau}^\tau, \dots, m_{i_\tau}^\tau\}$ of $\cf(\tau)$. 
Note that by definition $\Pmu\subseteq \CE_{\vert\cS}$, and we have a canonical projection $\Pmu\twoheadrightarrow\Emu$. 

The inclusion $\Pmu\subseteq \CE_{\vert\cS}$ implies that, for all representations $(\rho, M_\rho)$ of $\Levi$, we have identifications of sheaves over $\CS$
$$\CE^\rho_{\vert\cS}=(\CE\times^\Levi M_\rho)_{\vert\cS}= {\mathcal P}_\mu \times^{\para} M_\rho.$$
In particular, for all dominant weights $\kappa$ of $\Levi$, each $\uni$-stable filtration of $\rho_\kappa\vert_{\para}$ induces a filtration on the pullback over $\cS$ of the sheaf $\omega^\kappa$. 
In particular, 
the natural projection $\rho_\kappa\vert_{\para}\twoheadrightarrow \varrho_\kappa$ induces a map on sheaves $\omega^\kappa\to \uo^\kappa.$

The projection $\Pmu\twoheadrightarrow\Emu$ implies that, for all representations $(\rho, M_\rho)$ of $\Levi$, we have identifications of sheaves over $\CS$
$$\gr({\mathcal P}_\mu \times^{\para} M_\rho)=\Emu\times^{\levi} M_\rho$$
(recall $\gr(\rho_\kappa\vert_{\para})= \rho_\kappa\vert_{\levi}$).
In particular, for $\rho=\rho_\kappa$, the equality $\rho_\kappa\vert_{\levi}=\oplus_{\kappa' \in\M_\kappa}\varrho_{\kappa'}$ implies that there exists a canonical morphism
$$\gr(\omega^\kappa)\to\oplus_{\kappa'\in\M_\kappa} \uo^{\kappa'},$$
which is an isomorphism if $p$ is sufficiently large, or after tensoring with $\Q_p$ (see Remark \ref{lattices}).
\end{proof}

For each 
weight $\kappa$ of $\Levi$, we define $\varPhi_\kappa$ as the composition of $H^0(\cS,\pi^\kappa)$ with 
$\Psi$, 
\[\varPhi_\kappa: H^0(\cS,\omega^\kappa)\to H^0(\cS,\uo^\kappa)\to V[\kappa]\subset V,\]
and write $\varPhi:=\oplus_\kappa\varPhi_\kappa$. The map $\varPhi$ realizes $p$-adic (and thus also classical) automorphic forms as $p$-adic forms OMOIT. 

For scalar weights $\kappa$, $\varPhi_\kappa$ is injective.
Unfortunately, for non-scalar weights $\kappa$, $\varPhi_\kappa$ is not injective.  Also, the image of $\varPhi$ is not $p$-adically dense in $V$ (because dominant weights for $\levi$ need not be dominant for $\Levi$).

\begin{rmk}  
The $\mu$-ordinary Hasse invariant $\hasse$ (as defined in Section \ref{HasseGN}) satisfies
\[\varPhi(\hasse)\equiv 1\mod p.\]
Moreover, for each scalar weight $\kappa$ of $\Levi$, the canonical trivialization over the Igusa tower $\omega^\kappa\cong \uo^\kappa\cong \CO_{\Ig}$ agrees modulo $p$ with (the pullback of) the identification $|\omega|^{m_0}=\CO_{\bS}$ over $\bS$.
\end{rmk}

\subsubsection{Local realizations} The connection between $p$-adic automorphic forms and $p$-adic forms OMOIT is stronger when working locally.

\begin{prop} \label{localsplit}
The notation remains the same as above. Let $x_0\in\cS(\F)$, and $S^\wedge_{x_0}$ denote the formal completion of $S$ at $x_0$. 
\begin{enumerate}
\item The filtration of $\omega$ is canonically split over $S^\wedge_{x_0}$.  That is, we have a canonical isomorphism over $\CO^\wedge_{\cS,x_0}$ $$\omega_{x_0}\cong \uo_{x_0}.$$ 

\item For each dominant weight $\kappa$ of $\Levi$, there is a canonical morphism over $\CO^\wedge_{\cS,x_0}$
$$\omega^\kappa_{x_0}\to \oplus_{\kappa'\in\M_\kappa} \uo^{\kappa'}_{x_0},$$
which is an isomorphism if $p$ is sufficiently large, or after tensoring with $\Q_p$.
\end{enumerate}
\end{prop}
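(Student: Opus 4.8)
The plan is to reduce everything to part (1), since part (2) follows from (1) by applying Schur functors and invoking the decomposition of $\rho_\kappa\vert_{\levi}$ already established in Section~\ref{LRrule} and Proposition~\ref{grade}(2). So the real work is constructing, over the formal completion $\CO^\wedge_{\cS,x_0}$, a canonical splitting of the slope filtration on $\omega = \omega(\CA)$ that is compatible with the $\CD$-structure. First I would recall that $\omega(\CA) = \Fil^1(\hdrone(\CA))$ decomposes as $\oplus_{\tau\in\T}\omega_\tau^{\oplus r}$, so it suffices to produce a canonical splitting of the induced filtration on each $\omega_\tau$, equivariantly for the residual $\CO_{F,u}$-action. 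The key input is that, by the remark following Proposition~\ref{slopefiltration}, the $\CD$-enriched \BT group $\gr(\CA[p^\infty])$ restricted to $\cS^\wedge_{x_0}$ is isomorphic to Moonen's canonical lift $\X^{\can}$, while $\CA[p^\infty]$ restricted to $\cS^\wedge_{x_0}$ is a deformation of the special fiber $\X^{\muord}(\CD)$. By Proposition~\ref{canonicallift}, $\X^{\can}$ is characterized as the unique lift to which all endomorphisms extend, and its Dieudonn\'e module carries a canonical splitting by isoclinic pieces (the decomposition $N(\co,n,\cf) = \oplus_t N(\lambda_t)^{\oplus m_t}$ of Section~\ref{allsubgr}).

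The concrete approach I would take: over the formal completion at the geometric point $x_0$, the filtered Dieudonn\'e crystal of $\CA[p^\infty]$ has a slope filtration (Proposition~\ref{slopefiltration}) whose graded pieces are isoclinic. I would argue that the Hodge filtration $\omega_\tau \subset M_\tau$ intersected with the slope filtration yields the claimed filtration on $\omega_\tau$, and that the identification $\gr(\CA[p^\infty])\vert_{\cS^\wedge_{x_0}}\cong \X^{\can}$ transports the canonical isoclinic splitting of $\omega_{\X^{\can}}$ back to a splitting of $\omega_\tau$. The point is that this identification is \emph{canonical} (it is the one lifting the canonical splitting over the perfect residue field $\F$ at $x_0$, using the rigidity of isoclinic $\CD$-enriched \BT groups from \cite[Proposition 2.1.5]{moonen}), so the resulting splitting $\omega_{x_0}\cong \uo_{x_0}$ is canonical and $\CO_{B,(p)}$-linear. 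Along the way I would verify the rank statement cited in Section~\ref{UO}: $\gr^t(\omega_\tau)$ is locally free of rank $m^\tau_t$ for $t\geq i_\tau$ and zero otherwise, which comes from comparing $\cf(\tau) = \rk\omega_\tau = \sum_{j\geq i_\tau} m_j^\tau$ (Remark~\ref{details}) with the slopes contributing to the Hodge filtration — a slope $\lambda_t$ contributes to $\omega$ precisely when the corresponding piece of $M_\tau$ meets $\Fil^1$, i.e. for $t \geq i_\tau$.

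For part (2): given the canonical isomorphism $\omega_{x_0}\cong\uo_{x_0}$ respecting the $\levi$-structure, I would apply the Schur functor $\schur_\kappa$ and use that $\schur_\kappa$ of a filtered module whose filtration splits canonically decomposes as $\oplus_{\kappa'\in\M_\kappa}\schur_{\kappa'}$ of the graded pieces — this is exactly the compatibility of Schur projectors with the decomposition $\rho_\kappa\vert_{\levi} = \oplus_{\kappa'\in\M_\kappa}\varrho_{\kappa'}$ recorded in the Remark at the end of Section~\ref{LRrule} (valid over $W(\ck)$, hence over $\CO^\wedge_{\cS,x_0}$). This gives $\omega^\kappa_{x_0} = \schur_\kappa(\omega_{x_0}) \cong \schur_\kappa(\uo_{x_0}) \cong \oplus_{\kappa'\in\M_\kappa}\uo^{\kappa'}_{x_0}$, as claimed.

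The main obstacle I anticipate is making the splitting in part (1) genuinely \emph{canonical} rather than merely existent, and controlling its interaction with the PEL/$\CD$-structure — in particular handling the places $\co = \co^*$ where a polarization pairing links the isoclinic pieces of slope $\lambda$ and $1-\lambda$, and checking the splitting is compatible with this duality. One must be careful that the identification $\gr(\CA[p^\infty])\vert_{\cS^\wedge_{x_0}}\cong\X^{\can}$ is itself canonical (uniqueness of the lift of the canonical splitting over $\F$, via rigidity), since otherwise the induced splitting of $\omega$ would depend on choices. Everything else — the Schur functor bookkeeping, the rank count — should be routine once this canonicity is nailed down.
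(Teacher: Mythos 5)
Your reduction of part (2) to part (1) via Schur functors and $\rho_\kappa\vert_\levi=\oplus_{\kappa'\in\M_\kappa}\varrho_{\kappa'}$ is correct and is also what the paper does (implicitly). However, your proposed route for part (1) has a genuine gap, and it is not the route the paper takes.

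You propose to deduce the splitting $\omega_{x_0}\cong\uo_{x_0}$ by ``transporting'' a splitting from $\X^\can$ along the identification $\gr(\CA[p^\infty])\vert_{\cS^\wedge_{x_0}}\cong\X^\can$. But that identification only produces a trivialization of the \emph{associated graded} sheaf $\uo=\gr(\omega_\bullet)$ --- it is the Igusa trivialization --- and does not by itself say anything about whether the extension $\omega_\bullet$ splits. To go from ``$\uo_{x_0}$ is a free $\cR$-module with a specific basis'' to ``$\omega_{x_0}\cong\uo_{x_0}$,'' you would already need the splitting you are trying to construct; as worded, the argument is circular. Rigidity of isoclinic $\CD$-enriched BT groups is the correct ingredient for establishing $\gr(\CA[p^\infty])\vert_{\cS^\wedge_{x_0}}\cong\X^\can$, but that is not the same assertion. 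Note also that the slope filtration of the BT group $\CA[p^\infty]$ itself does \emph{not} split over $\cS^\wedge_{x_0}$ (the deformation parameters precisely encode that non-splitting), so any argument phrased purely on the BT group side is in danger of proving something false.

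The paper's proof works on the crystalline side. It fixes an orbit $\co$ and works with $M=M_\co\subset\hdr^1$ together with its slope filtration $M_\bullet$. Over $\cS^\wedge_{x_0}$ this filtration of the crystal is canonically split, $M=\oplus_{i=1}^{s+1}M^i$ (this is where Moonen's explicit description of the deformation enters --- the crystal $M$ over $\cR$ has a basis compatible with slope, even though the Frobenius is deformed). The heart of the argument is then the identity, for each $\tau\in\co$,
\[
\omega_\tau=\bigoplus_{i>\iota(\tau)}M^i_\tau
\]
over $\cS^\wedge_{x_0}$, where $\iota(\tau)$ is determined by $\cf(\tau)=F_{\iota(\tau)}$. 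That is, the Hodge filtration is \emph{exactly} a direct sum of isoclinic pieces of $M_\tau$, not merely of the expected rank. This $\mu$-ordinary combinatorics is what forces the slope filtration of $\omega$ to split: one gets $\omega_j=\omega_{j-1}\oplus(\oplus_{\tau\in\co_{<j}}M^j_\tau)$ for each $j$. Your proposal records the rank count $\rk\gr^t(\omega_\tau)=m^\tau_t$ for $t\geq i_\tau$, which is consistent, but a rank equality of graded pieces does not produce the required equality of submodules, and you never assert the crucial fact that $\omega_\tau$ sits inside $M_\tau$ as a sum of slope pieces. That assertion, established via Moonen's coordinates, is the content of the proof; supplying it would close the gap.
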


\begin{proof}
We deduce the existence of the canonical splitting of the filtration on $\omega_{x_0}$ from the description of the Dieudonn\'e crystal of the universal deformaion of a $\mu$-ordinary \BT group in \cite[Section 2.1.7]{moonen}.  In the following, all sheaves are restricted to the formal neighborhood $\cS^\wedge_{x_0}$, but for simplicity still denoted by the same notation. 

Given the decomposition of the filtered Dieudonn\'e crystal of $A$ into subcrystals 
$$\omega(A)=\bigoplus_{\co\in\FO} (\omega_\co)^{\oplus r}\subset 
H^1_{dR}(A)=\bigoplus_{\co\in\FO} (M_\co)^{\oplus r},$$
where $\omega_\co:= \oplus_{\tau\in\co} \omega_\tau$ and $
M_\co=\oplus_{\tau\in\co} M_\tau$, it is enough to prove that for each orbit $\co$ the filtration of $\omega_\co$ is canonically split.

Fix $\co$, and write $M=M_\co$, $\omega=\omega_\co$, $\omega\subset M$. With notation as in Remark \ref{details}, let $F_1>\cdots > F_{s}$ denote the distinct values of $\cf(\tau)$ in the interval $[1,n-1]$ ($s=s_\co\geq 0$), and write $F_0:=n$ and $F_{s+1}:=0$. Then the crystal $M$ has exactly $s+1$ distict slopes. Write $M_\bullet$ for its slope filtration,
\[0=M_0\subset M_1\subset \cdots \subset M=M_{s+1},\]
and $M^i:=M_i/M_{i-1}$, $i\geq 1$.
In \cite[Section 2.1.7, and Propositions 2.1.8 and 2.1.9]{moonen}, Moonen gives an explicit description of the pair $(M, \omega)$  over $\cS^\wedge_{x_0}$ (in {\it loc. cit.} $M={\mathcal M}$ and $\omega={\rm Fil}^1({\mathcal M})$). In particular, $M=\oplus_1^{s+1} M^i$, and for all $j=1,\ldots, s+1$, $M_j=\oplus_1^jM^i$.

For each $i=0, \ldots, s+1$, define \[\co_i:=\{\tau\in\co|\cf(\tau)=F_i\},\] and $\co_{< j}:=\cup_{i< j}\co_i$. Also, for each $\tau\in\co$, we write $\iota(\tau):=i$ if $\tau\in \co_i$.
Then it follows from the definitions that, over $\cS^\wedge_{x_0}$, for all $\tau$ such that $\iota(\tau)<s+1$
$$\omega_\tau=\oplus_{\iota(\tau)+1}^{s+1} M^{i}_{\tau},$$
and $0$ otherwise.
Moreover, the filtration $\omega_\bullet$ of $\omega$ induced by the slope filtration of $M$ satisfies, for each $\tau\in\co$,
$\omega_{j,\tau}=\oplus_{\iota(\tau)+1}^{j} M^{i}_{\tau}$ for $j> \iota(\tau)$, and $0$ otherwise.

In particular, for each  $j=1, \ldots, s+1$, \[\omega_{j}=\omega_{j-1}\oplus (\oplus_{\tau\in \co_{< j}} M_\tau^j).\] 
\end{proof}

Fix $x_0\in \cS(\F)$, and let $\kappa$ be a dominant weight of $\Levi$. For each $x\in \Ig(\F)$ above $x_0$,  and $\kappa'\in\M_\kappa$, we define
\[\Phi^{\kappa,\kappa'}_{x}: H^0(\cS,\omega^\kappa)\to \omega^\kappa_{x_0}\to \oplus_{\kappa' \in \M_\kappa}\uo_{x_0}^{\kappa'}\to \uo_{x_0}^{\kappa'}\to \CO^\wedge_{\Ig,x},\]
as the composition of localization at $x_0$, with the canonical morphism of part (2) of Proposition \ref{localsplit}, followed by $\Psi_{\kappa', x}$, the localization of $\Psi_{\kappa'}$ at $x$.

Note that for each $g\in\levi(\ZZ_p)$, $\Psi_{\kappa',x^g}= \Psi_{\kappa',x}\circ g$. In particular, it follows from Proposition \ref{psi} that the map \[\Phi^\kappa_{x_0}:=\prod_{x,\kappa'}\Phi^{\kappa,\kappa'}_{x}:H^0(\cS,\omega^\kappa)\to\prod_{x,\kappa'} \CO^\wedge_{\Ig,x}\] is injective if $\cS$ is connected.  In the following, we also write $\Phi_x:=\Phi^\kappa_x:=\sum_{\kappa'\in\M_\kappa} \Phi^{\kappa, \kappa'}_{x}:H^0(\cS,\omega^\kappa)\to\CO^\wedge_{\Ig,x}$.

\begin{remark}

For each pure weight $\kappa$ of $\Levi$, the morphism $\Phi^{\kappa,\kappa}_x$ agrees with the composition of $\varPhi_\kappa$ with the localization at $x$,
\[ H^0(\cS,\omega^\kappa)\to H^0(\cS,\uo^\kappa)\to V\to\CO^\wedge_{\Ig,x}.\]
\end{remark}

\subsection{$p$-adic $u$-expansion principle and congruences}\label{cong-section}
In this section, we generalize the results of \cite[Section 5]{CEFMV} to the $\mu$-ordinary setting. We refer to {\it loc. cit.} for more details.
Here, we work under the assumption that $\torus(\Z_p)$ acts transitively on the connected components of $\Ig$.
As in Hida's work, the restriction of the Igusa tower over a connected component of the $\mu$-ordinary locus is not irreducible.
As stated, the $p$-adic $\uu$-expansion principle (like its analogue, the $q$-expansion principle) relies on the transitivity of the action of $\torus(\Z_p)$ on the set of connected components of $\Ig$. 

We choose the notation $\uu$, instead of $t$ as in \cite{CEFMV}, for the coordinate in local expansions at $\mu$-ordinary CM points, because it agrees with Moonen's conventions in \cite{moonen}, which play an important role in some of the notationally heavy portions of this paper. 

\newcommand{\loc}{{\rm loc}}
Following Hida, to establish an analogue of the $p$-adic $q$-expansion principle, we fix a connected component $\cS_0$ of the $\mu$-ordinary locus $\cS$, together with a marked point $x_0$ on $\cS_0$, and replace $p$-adic automorphic forms on $\cS$ with their restriction to $\cS_0$, and the Igusa tower by its pullback to $\cS_0$.  Alternatively, one can work with many marked points on $\cS$ at once, one point on each connected component.

\subsubsection{Canonical parameters at $\mu$-ordinary points}
Fix a point $x$ of $\Ig(\W)$, above a $\mu$-ordinary point $x_0$. In \cite[Section 2.1.7]{moonen}, Moonen defines a set of local parameters $\uu$ of $\cS_0$ at $x_0$, associated with a trivialization of the fiber at $x_0$ of the universal $\CD$-enriched \BT group. With our notation, this is equivalent to the choice of a point $x$ of the Igusa tower lying above $x_0$.  

In the following, we denote the choice of parameters $\uu$ at the point $x_0$ associated with the point $x\in\Ig(\W)$ as 
\[\beta^*_x:\CO^\wedge_{\cS,x_0}\cong \CO_{\Ig,x}^\wedge\cong  \W[\![\uu]\!]:=\W[\![u^\tau_{r,s}|\tau\in\T, r,s=1,\dots n]\!]\]
where by definition, for each $\tau\in \T$, $u^\tau_{r,s}:=0 $   if either $r \leq n-\cf(\tau)$ or $s > n-\cf(\tau)$ (in {\it loc. cit.} $i=\tau$, and $d=n$).
We write $\loc_x:V\to \CO_{\Ig,x}^\wedge$ for the localization at $x$.

\begin{rmk}
The results in this section do not rely on the special properties of the parameters $\uu$. In fact, they could as easily be stated in terms of the localization at $x$. We choose to state them in terms of the associated power series in $\W[\![\uu]\!]$ to stress the analogue with the $p$-adic $q$-expansion principle, in the ordinary case.  
\end{rmk}

\begin{defi}
For any global function $f\in V$ on the Igusa tower, we define the $\uu$-expansion of $f$ at $x$  as
\[f(\uu)=f_x(\uu):=\beta^*_x(\loc_x(f))\in\W[\![\uu]\!].\]

For each $p$-adic form $f\in H^0\left(\cS, \uo^{\kappa}\right)$ of weight $\kappa$, $\kappa$ a dominant weight of $\levi$, we set
$$f(\uu):=\Psi(f)(\uu)\in\W[\![\uu]\!].$$

For each classical (respectively, $p$-adic) automorphic form $f\in H^0\left(\Sh, \omega^{\kappa}\right)$ 
(respectively, $f\in H^0\left(\cS, \omega^{\kappa}\right)$) of weight $\kappa$, for $\kappa$ a dominant weight of $\Levi$, we set 
$$f(\uu):=\varPhi(f)(\uu)\in\W[\![\uu]\!].$$
\end{defi}

\begin{prop}\label{prop-st}Maintain the above notation.
\begin{enumerate}

\item For any $f\in \Vmu$, $f=0$ if and only if $(g\cdot f)(\uu)=0$ for all $g\in\torus (\Z_p)$.
\item For any dominant weight $\kappa$ of $\levi$ and $f\in\Vmu[\kappa]$, $f=0$ if and only if $f(\uu)=0$.
\item For $m\geq 1$, $\kappa_i$ dominant weights of $\levi$, and $f_i\in\Vmu[\kappa_i]$, $i=1,2$, $f_1\equiv f_2\mod p^m$ if and only if for all $g\in \torus(\Z_p)$ $$\kappa_1(g) f_{1}(\uu)\equiv \kappa_2(g) f_{2}(\uu) \mod p^m.$$
\end{enumerate}
\end{prop}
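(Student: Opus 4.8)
The plan is to reduce all three statements to the irreducibility/transitivity properties of the $\mu$-ordinary Igusa tower together with the fact that a section of a line bundle over a connected scheme is determined by its values, and then to track the torus action carefully through these reductions. For part (1), I would first observe that $V=\varprojlim_m\varinjlim_n V_{n,m}$, so it suffices to treat $f\in V_{n,m}=H^0(\Ig_{n,m},\CO_{\Ig_{n,m}})$ for fixed $n,m$. The key point is that, by Remark \ref{trans}, $\torus(\Z_p)$ acts transitively on the set of connected components of $\Ig$, and by Proposition \ref{irr} (together with the existence of a $\mu$-ordinary hypersymmetric point) each orbit $\Ig^{SU}_\mu$ is geometrically irreducible. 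Hence every connected component of $\Ig_{n,m}$ contains a point of the form $x^g$ for some $g\in\torus(\Z_p)$ lying over the marked point $x_0$, and $(g\cdot f)(\uu) = \beta^*_{x^g}(\loc_{x^g}(f))$ is, by the definition of the $\uu$-expansion and the identification $\CO^\wedge_{\cS,x_0}\cong\CO_{\Ig,x}^\wedge\cong\W[\![\uu]\!]$, precisely the formal expansion of $f$ along that component. Since $\Ig_{n,m}$ is reduced and each component is integral, a function vanishing to infinite order at a point of each component vanishes identically on that component, hence on all of $\Ig_{n,m}$; letting $n,m\to\infty$ gives $f=0$ in $V$.

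For part (2), I would use that when $f\in\Vmu[\kappa]$ is a $\kappa$-eigenvector for the $\torus(\Z_p)$-action, we have $(g\cdot f)(\uu)=\kappa(g)f(\uu)$ for all $g\in\torus(\Z_p)$ (this is just the definition of the eigenspace, transported through $\loc_x$ and $\beta^*_x$, using the compatibility $\Psi_{\kappa,x^g}=\Psi_{\kappa,x}\circ g$ noted after Proposition \ref{psi}). Since $\kappa(g)$ is a unit in $\W$, the condition $(g\cdot f)(\uu)=0$ for all $g$ is equivalent to $f(\uu)=0$, and part (1) then forces $f=0$. Part (3) is the ``congruence'' refinement: applying the previous reasoning to $f_1-f_2$ is not quite possible because $f_1,f_2$ need not have the same weight, so instead I would argue modulo $p^m$ directly. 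Set $h:=f_1-f_2\in V/p^mV$; then for $g\in\torus(\Z_p)$ one has $\loc_{x^g}(f_i)=\kappa_i(g)\loc_x(f_i)$ after trivializing, so $h(\uu)$ along the component through $x^g$ is represented by $\kappa_1(g)f_1(\uu)-\kappa_2(g)f_2(\uu)$; by the argument of part (1) applied over $\W_m$, $h\equiv 0\bmod p^m$ if and only if this expression vanishes $\bmod\,p^m$ for every $g\in\torus(\Z_p)$, which is exactly the asserted criterion.

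The main obstacle I anticipate is the bookkeeping in part (3): one must be careful that the identity $\loc_{x^g}(f)=\kappa(g)\loc_x(f)$ (via the canonical trivialization) holds on the nose as power series in $\W[\![\uu]\!]$, rather than merely up to a unit that could itself vary with the component, and that this remains valid after reduction mod $p^m$. This requires knowing that the canonical trivialization of $\uo^\kappa$ over $\Ig_{m,m}$ built from Schur functors (as in Section \ref{UO}) is genuinely $\levi(\Z_p)$-equivariant, which follows from its construction via the universal Igusa level structure, but should be spelled out. A secondary technical point is that the ``vanishing to infinite order implies identically zero on the component'' step needs $\Ig_{n,m}$ (and its mod-$p^m$ thickening) to be reduced and its components integral; this is exactly what Proposition \ref{irr} and the étale-ness of the Igusa cover over $\cS_m$ provide, but one should confirm that the marked point $x$ (resp.\ its $\torus(\Z_p)$-translates) lands on the correct components, i.e.\ that the $\det$-map description of the components in Remark \ref{trans} matches the orbit of $x$ under $\torus(\Z_p)$. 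Once these compatibilities are in hand, the proof is a direct transcription of Hida's $q$-expansion principle argument (cf.\ \cite[Theorem 8.3 and its corollaries]{Hida}) to the $\mu$-ordinary setting.
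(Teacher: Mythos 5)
Your proposal takes essentially the same route as the paper's (deliberately terse) proof, which reduces everything to the transitivity from Remark \ref{trans} together with the two localization identities $\loc_x(g\cdot f)=\loc_{x^g}(f)$ and, for $f\in\Vmu[\kappa]$, $\loc_x(g\cdot f)=\kappa(g)\loc_x(f)$, citing \cite{CEFMV} for the routine details you spell out. One minor notational point to tidy up: by the paper's definition $(g\cdot f)(\uu)=\beta^*_x(\loc_x(g\cdot f))=\beta^*_x(\loc_{x^g}(f))$, not $\beta^*_{x^g}(\loc_{x^g}(f))$ --- the Moonen parameters $\beta^*_x$ and $\beta^*_{x^g}$ are different identifications $\CO^\wedge_{\cS,x_0}\cong\W[\![\uu]\!]$ in general --- but since the eigenvector identity gives $\loc_{x^g}(f)=\kappa(g)\loc_x(f)$ already inside $\CO^\wedge_{\cS,x_0}$, your computation for part (3) is unaffected once that substitution is made.
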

\begin{proof}The statements follow immediately from the transitivity of the action of $\torus(\Z_p)$ on the set of connected components of $\Ig$ (Remark \ref{trans})  and the equalities, for $g\in\torus(\Z_p)$, $\loc_x(g\cdot f)=\loc_{x^g}(f)$ for $f\in \Vmu$, and $\loc_x(g\cdot f)=\kappa(g)\loc_x(f)$ for $f\in\Vmu[\kappa]$. 
(The arguments of \cite{CEFMV} still apply in our setting.)
 \end{proof}

The next corollary is an immediate consequence of Proposition \ref{prop-st} combined with Proposition \ref{psi}.

\begin{cor}Maintain the above notation.
\begin{enumerate}
\item For any dominant weight $\kappa$ of $\levi$ and $f\in H^0\left(\cS, \uo^{\kappa'}\right)$, $f=0$ if and only if $f(\uu)=0$.
\item For $m\geq 1$, $\kappa_i$ dominant weights of $\levi$, and $f_i\in H^0\left(\cS, \uo^{\kappa_i}\right)$, $i=1,2$, 
$f_1\equiv f_2\mod p^m$ if and only if for all $g\in \torus(\Z_p)$ $$\kappa_1(g) f_{1}(\uu)\equiv \kappa_2(g) f_{2}(\uu) \mod p^m.$$
\end{enumerate}
\end{cor}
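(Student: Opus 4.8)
The plan is to derive the corollary directly from Proposition \ref{prop-st} by transporting statements along the injective maps $\Psi_\kappa$. Recall from Proposition \ref{psi}(1) that for each dominant weight $\kappa$ of $\levi$ the map $\Psi_\kappa: H^0(\cS,\uo^\kappa)\to \Vmu^N[\kappa]$ is injective, and by the very definition of the $\uu$-expansion of a $p$-adic form OMOL we have $f(\uu) = \Psi(f)(\uu) = \Psi_\kappa(f)(\uu)$ whenever $f\in H^0(\cS,\uo^\kappa)$. So each assertion about $f$ in the corollary is literally an assertion about $\Psi_\kappa(f)\in\Vmu^N[\kappa]$, and we are in position to quote the corresponding part of Proposition \ref{prop-st}.

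First I would prove part (1). Let $f\in H^0(\cS,\uo^\kappa)$. If $f=0$ then $\Psi_\kappa(f)=0$, hence $f(\uu)=\Psi_\kappa(f)(\uu)=0$. Conversely, if $f(\uu)=0$, i.e.\ $\Psi_\kappa(f)(\uu)=0$, then since $\Psi_\kappa(f)\in\Vmu[\kappa]$ we may apply Proposition \ref{prop-st}(2) to conclude $\Psi_\kappa(f)=0$, and then injectivity of $\Psi_\kappa$ (Proposition \ref{psi}(1)) gives $f=0$. Note that for part (1) one does not even need the transitivity hypothesis, since Proposition \ref{prop-st}(2) handles a single weight-homogeneous element via the single $\uu$-expansion at $x$; the transitivity hypothesis only enters when one mixes connected components or inhomogeneous elements.

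Next, part (2). Given $m\geq 1$, dominant weights $\kappa_1,\kappa_2$ of $\levi$, and $f_i\in H^0(\cS,\uo^{\kappa_i})$, set $F_i:=\Psi_{\kappa_i}(f_i)\in\Vmu[\kappa_i]$. By $\W$-linearity of $\Psi_{\kappa_i}$ (it is built from Schur functors and the linear functional $\ell^{\kappa_i}$), the congruence $f_1\equiv f_2\bmod p^m$ in $H^0(\cS,\uo^{\kappa_1})$ together with $H^0(\cS,\uo^{\kappa_2})$ — here one should be slightly careful: the congruence $f_1\equiv f_2\bmod p^m$ is a congruence of elements of $V$ after applying $\Psi$, exactly as in the statement of Proposition \ref{prop-st}(3) — translates to $F_1\equiv F_2\bmod p^m$ in $\Vmu$ iff $f_1\equiv f_2\bmod p^m$, using injectivity of $\Psi=\oplus\Psi_\kappa$ (Proposition \ref{psi}(2)) to go backwards. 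Then apply Proposition \ref{prop-st}(3) to $F_1,F_2$: we get $F_1\equiv F_2\bmod p^m$ if and only if $\kappa_1(g)F_1(\uu)\equiv\kappa_2(g)F_2(\uu)\bmod p^m$ for all $g\in\torus(\Z_p)$, and since $F_i(\uu)=f_i(\uu)$ by definition this is exactly the claimed equivalence.

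The main obstacle, such as it is, is bookkeeping rather than mathematics: one must be careful about what ``$f_1\equiv f_2\bmod p^m$'' means when $f_1,f_2$ are forms OMOL of genuinely different weights $\kappa_1\neq\kappa_2$ — this congruence is only meaningful after embedding into $V$ via $\Psi$, and the claim implicitly asks that the images agree mod $p^m$. Once this is understood the argument is a formal two-step reduction (embed via $\Psi_{\kappa_i}$, which is injective and linear; then quote Proposition \ref{prop-st}), and this is why the corollary is stated as "an immediate consequence." One should also remember to carry along the standing hypothesis of Section \ref{cong-section} that $\torus(\Z_p)$ acts transitively on $\pi_0(\Ig)$, which is what makes Proposition \ref{prop-st}(3) — and hence part (2) — valid.
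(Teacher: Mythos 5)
Your proof is correct and follows exactly the route the paper indicates; the paper itself gives no written argument, stating only that the corollary is ``an immediate consequence of Proposition \ref{prop-st} combined with Proposition \ref{psi},'' and your two-step reduction (embed via the injective $\Psi_{\kappa_i}$, identify $\Psi_{\kappa_i}(f_i)(\uu)$ with $f_i(\uu)$ by definition, then quote the relevant part of Proposition \ref{prop-st}) is precisely what that remark means. One aside is wrong, though: you claim that part (1) does not require the transitivity hypothesis because Proposition \ref{prop-st}(2) ``handles a single weight-homogeneous element via the single $\uu$-expansion at $x$.'' In fact the paper's proof of Proposition \ref{prop-st} derives \emph{all three parts} from transitivity of $\torus(\Z_p)$ on $\pi_0(\Ig)$: the $\uu$-expansion is taken at a single marked point $x$, and for a homogeneous $f\in\Vmu[\kappa]$ the relation $\loc_{x^g}(f)=\kappa(g)\loc_x(f)$ lets one propagate vanishing from $x$ only to the $\torus(\Z_p)$-orbit of $x$; without transitivity there could be a connected component of $\Ig$ on which $f$ is nonzero even though $f(\uu)=0$. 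This does not affect the validity of your main argument, but the remark should be dropped.
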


\begin{cor}\label{necc}
Let $m\geq 1$.  Let $f_i$ be classical or $p$-adic automorphic forms of scalar weight $\kappa_i$, $i=1,2$.
Then $f_1\equiv f_2\mod p^m$  if and only if
$$\kappa_1(g)\equiv\kappa_2(g)\mod p^m ,\text{ for all } g\in \torus(\Z_p)$$ and 
$$f_1(\uu)\equiv f_{2}(\uu) \mod p^m.$$
\end{cor}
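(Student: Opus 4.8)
\textbf{Proof proposal for Corollary \ref{necc}.}

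The plan is to reduce the statement to the previously established $\uu$-expansion congruence principle (Proposition \ref{prop-st}(3), together with its corollary). The key simplifying feature is that for a scalar weight $\kappa$ of $\Levi$, we have $\M_\kappa=\{\kappa\}$, so $\rho_\kappa\vert_{\levi}=\varrho_\kappa$ is irreducible; hence the projection $\pi^\kappa:\omega^\kappa\twoheadrightarrow\uo^\kappa$ of Proposition \ref{grade}(3) is an isomorphism, and the realization map $\varPhi_\kappa$ is injective and lands in $\Vmu[\kappa]$. Thus $f_i$, viewed via $\varPhi_{\kappa_i}$ as an element of $\Vmu[\kappa_i]$, is genuinely captured (no loss of information) by its $\uu$-expansion $f_i(\uu)=\varPhi(f_i)(\uu)\in\W[\![\uu]\!]$.

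First I would invoke Proposition \ref{prop-st}(3) with these two elements $\varPhi_{\kappa_1}(f_1)\in\Vmu[\kappa_1]$ and $\varPhi_{\kappa_2}(f_2)\in\Vmu[\kappa_2]$: it gives that $f_1\equiv f_2\bmod p^m$ (equivalently $\varPhi(f_1)\equiv\varPhi(f_2)\bmod p^m$, using injectivity on scalar weights) holds if and only if
\[
\kappa_1(g)\,f_1(\uu)\equiv\kappa_2(g)\,f_2(\uu)\bmod p^m\quad\text{for all }g\in\torus(\Z_p).
\]
Second, I would show that, under the hypothesis that the $f_i$ are nonzero mod $p^m$ (the zero case being trivial and handled separately), this family of congruences over all $g$ is equivalent to the conjunction of the two stated conditions: $\kappa_1(g)\equiv\kappa_2(g)\bmod p^m$ for all $g\in\torus(\Z_p)$, and $f_1(\uu)\equiv f_2(\uu)\bmod p^m$. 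The implication ``($\Leftarrow$)'' is immediate: multiply the expansion congruence by $\kappa_1(g)$ and substitute $\kappa_1(g)\equiv\kappa_2(g)$. For ``($\Rightarrow$)'', specialize $g=1$ to get $f_1(\uu)\equiv f_2(\uu)\bmod p^m$ directly; then, since $f_1(\uu)\equiv f_2(\uu)\not\equiv 0\bmod p^m$, there is a monomial $u^\alpha$ whose coefficient $c$ in $f_1(\uu)$ is a unit mod $p^{m'}$ for some $1\le m'\le m$ (take $m'$ maximal with $f_1\not\equiv 0\bmod p^{m'}$), and comparing the $u^\alpha$-coefficients of $\kappa_1(g)f_1(\uu)$ and $\kappa_2(g)f_2(\uu)$ mod $p^{m'}$ gives $(\kappa_1(g)-\kappa_2(g))c\equiv 0\bmod p^{m'}$, whence $\kappa_1(g)\equiv\kappa_2(g)\bmod p^{m'}$; a short induction on the $p$-adic valuation then upgrades this to mod $p^m$.

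The main obstacle is the last step of the argument, namely correctly handling the case where $f_1(\uu)$ (hence also $f_2(\uu)$) is divisible by a positive power of $p$: one cannot directly cancel $c$, and one must track how the divisibility of the expansion interacts with the divisibility of $\kappa_1(g)-\kappa_2(g)$. The cleanest fix, which I would adopt, is to write $f_i=p^{v}\tilde f_i$ with $v$ the common $p$-adic valuation of the expansions (well-defined and equal for $i=1,2$ once we know $f_1(\uu)\equiv f_2(\uu)$ and both are nonzero mod $p^m$, by comparing with $g=1$) and $\tilde f_i(\uu)$ primitive, then apply the unit-coefficient argument to $\tilde f_i$ and recover the congruence $\kappa_1(g)\equiv\kappa_2(g)\bmod p^{m-v}$, which suffices since $\kappa_i(g)$ are units and $f_i$ are $p^v$ times primitive series. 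One should also remark that this uses transitivity of $\torus(\Z_p)$ on connected components of $\Ig$ (the standing assumption of Section \ref{cong-section}), since Proposition \ref{prop-st} does.
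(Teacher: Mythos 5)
Your overall strategy is the right one: since $\M_{\kappa}=\{\kappa\}$ for scalar $\kappa$, the projection $\pi^\kappa$ is an isomorphism, $\varPhi_\kappa$ is injective, and the statement reduces (via Proposition \ref{prop-st}(3), applied to $\varPhi_{\kappa_i}(f_i)\in\Vmu[\kappa_i]$) to showing that
\[
\kappa_1(g)\,f_1(\uu)\equiv\kappa_2(g)\,f_2(\uu)\pmod{p^m}\ \text{for all }g\in\torus(\Z_p)
\]
is equivalent to the conjunction of $\kappa_1(g)\equiv\kappa_2(g)\pmod{p^m}$ for all $g$ and $f_1(\uu)\equiv f_2(\uu)\pmod{p^m}$. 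The ``$\Leftarrow$'' direction and the deduction of the expansion congruence from $g=1$ are both fine. The paper gives no explicit proof for this corollary, so this is indeed the route one must take.

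However, your handling of the ``$\Rightarrow$'' direction contains a genuine gap, and you should be more explicit that the corollary, as literally stated, is only true under a nondegeneracy hypothesis. Specializing at $g=1$ gives $(\kappa_1(g)-\kappa_2(g))\,f_1(\uu)\equiv 0\pmod{p^m}$, and to conclude anything about $\kappa_1(g)-\kappa_2(g)$ one needs $f_1(\uu)$ to contribute a unit. If $f_1(\uu)$ (hence $f_2(\uu)$) is divisible by $p^v$ with $v>0$, the most you can deduce is $\kappa_1(g)\equiv\kappa_2(g)\pmod{p^{m-v}}$; the stated $\pmod{p^m}$ does not follow. Your claim that this ``suffices'' is circular: it suffices to recover the hypothesis, not to establish the asserted conclusion. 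In the extreme case $f_1=f_2=0$, the left-hand side of the equivalence holds for all choices of $\kappa_1,\kappa_2$, and the corollary's first condition simply fails. The statement therefore carries an implicit hypothesis that $f_1\not\equiv 0\pmod p$ (equivalently that $f_1(\uu)$ has a unit coefficient), under which your unit-coefficient argument runs cleanly with $v=0$ and no rescaling is needed. You should say this plainly rather than factor out $p^v$ and assert sufficiency. There is also a minor slip in the bookkeeping: you want $m'$ \emph{minimal} with $f_1\not\equiv 0\pmod{p^{m'}}$ (so $m'=v+1$); ``maximal'' is unbounded above whenever $f_1\neq 0$.
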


\begin{rmk}
For non-scalar weights,  the above condition is necessary but not sufficient.
\end{rmk}

Finally, to state a sufficient condition for general weights, for any $p$-adic (or classical) automorphic form $f$ of weight $\kappa$,
 we consider the $\uu$-expansions 
\[f_x^{(\kappa')}(\uu):=\beta^*_x(\Phi_x^{\kappa,\kappa'}(f))\in\W[\![\uu]\!]\] 
defined for all $\kappa'\in\M_\kappa$
and all points $x\in\Ig(\W)$ lying above a fixed $\mu$-ordinary point $x_0$.
(Note that $f(\uu)=f_x^{(\kappa)}(\uu).$)

\begin{cor}\label{cong-mult-cor}
Maintain the above notation. Fix $x_0\in\cS_0(\W)$.
\begin{enumerate}
\item For any dominant weight $\kappa$ of $\Levi$, and $f$ a classical or $p$-adic automorphic form of weight $\kappa$, $f=0$ on $\cS_0$ if and only if 
for all $x\in\Ig(\W)$ above $x_0$, $$f_x^{(\kappa')}(\uu)=0 \text{ for all }\kappa'\in\M_\kappa.$$

\item For $m\geq 1$, $\kappa_i$ dominant weights of $\Levi$, and $f_i$ classical or $p$-adic automorphic forms, respectively, of weight $\kappa_i$, $i=1,2$, we have 
$f_1\equiv f_2\mod p^m$  if
$$\kappa_1(g)\equiv\kappa_2(g)\mod p^m \text{ for all } g\in \torus(\Z_p)$$ and for all pairs $\kappa'_i\in\M_{\kappa_i}$, $i=1,2$, 
with $\kappa'_1(g)\equiv\kappa'_2(g)\mod p^m $, for all $x\in\Ig(\W)$ above $x_0$
$$f_{1,x}^{(\kappa_1')}(\uu)\equiv f_{2,x}^{(\kappa_2')}(\uu) \mod p^m.$$\label{cor-item-2}
\end{enumerate}
\end{cor}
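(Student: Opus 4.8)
The plan is to deduce both assertions from inputs already established: the injectivity of the combined local realization $\Phi^\kappa_{x_0}=\prod_{x,\kappa'}\Phi^{\kappa,\kappa'}_x$ on $H^0(\cS,\omega^\kappa)$, which rests on the canonical local splitting of Proposition \ref{localsplit} and on Proposition \ref{psi}; the $\mu$-ordinary $\uu$-expansion principle of Proposition \ref{prop-st}, which is already stated modulo $p^m$ and uses the transitivity of $\torus(\Z_p)$ on the connected components of $\Ig$ (Remark \ref{trans}); and the elementary fact that $\kappa'(g)\in\W^\times$ for every dominant weight $\kappa'$ and every $g\in\torus(\Z_p)$, so that congruences modulo $p^m$ may be freely multiplied by such values.

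For part (1): since $\beta^*_x$ (composed with the completion at $x$) is injective and $f^{(\kappa')}_x(\uu)=\beta^*_x(\Phi^{\kappa,\kappa'}_x(f))$, the vanishing of all the power series $f^{(\kappa')}_x(\uu)$, as $\kappa'$ ranges over $\M_\kappa$ and $x$ over the points of $\Ig(\W)$ above $x_0$, is equivalent to the vanishing of all the components $\Phi^{\kappa,\kappa'}_x(f)$, i.e.\ to $\Phi^\kappa_{x_0}(f)=0$. As $\cS$ is connected, $\Phi^\kappa_{x_0}$ is injective, so $f=0$; the reverse implication is trivial. A classical form on $\Sh$ is first restricted to $\cS$, which loses nothing under the running transitivity hypothesis.

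For part (2), the argument combines the injectivity of $\Phi^\kappa_{x_0}$ with the $\torus(\Z_p)$-equivariance of the refined expansions. For $g\in\torus(\Z_p)\subseteq\levi(\Z_p)$ the point $x^g$ again lies above $x_0$, and since $\levi$ acts on $\uo^{\kappa'}$ through $\varrho_{\kappa'}$, hence through the scalar $\kappa'(g)^{\pm1}$ on the torus, the identity $\Psi_{\kappa',x^g}=\Psi_{\kappa',x}\circ g$ gives $f^{(\kappa')}_{x^g}(\uu)=\kappa'(g)^{\pm1}f^{(\kappa')}_x(\uu)$ up to the corresponding change of coordinates $\uu$. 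Using this together with Proposition \ref{prop-st}, verifying $f_1\equiv f_2\mod p^m$ reduces to comparing, for every $g\in\torus(\Z_p)$ and every $x$ above $x_0$, the appropriately $\kappa_i(g)$-twisted refined expansions of the two forms. The hypotheses then supply exactly what is needed: $\kappa_1(g)\equiv\kappa_2(g)\mod p^m$ aligns the outer twists, and for each matching pair $(\kappa'_1,\kappa'_2)$ the relation $\kappa'_1(g)\equiv\kappa'_2(g)\mod p^m$ together with the assumed $f^{(\kappa'_1)}_{1,x}(\uu)\equiv f^{(\kappa'_2)}_{2,x}(\uu)\mod p^m$ --- all of which are congruences and products among units --- propagates to the twisted expansions. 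Reassembling the $\kappa'$-components via Proposition \ref{localsplit} and summing over the matching pairs --- the diagonal pair $(\kappa_1,\kappa_2)$ always occurs, since $\kappa\mid\kappa$ --- yields the stated congruence; for $\kappa_1=\kappa_2$ this recovers, via the injectivity of $\Phi^\kappa_{x_0}$ modulo $p^m$, the honest congruence $f_1-f_2\in p^m H^0(\cS,\omega^{\kappa_1})$, and for $\kappa_1\neq\kappa_2$ the congruence of the images $\varPhi(f_1),\varPhi(f_2)$ in $V$.

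I expect the main obstacle to be the bookkeeping in part (2): the splitting $\omega^\kappa_{x_0}\cong\oplus_{\kappa'\in\M_\kappa}\uo^{\kappa'}_{x_0}$ is canonical only at the chosen point $x_0$, while the $\torus(\Z_p)$-action used to propagate from $x_0$ to all of $\Ig$ simultaneously moves $x_0$ inside its fiber (inducing a change of the parameters $\uu$) and rescales each summand by its character, so one must check carefully that, modulo $p^m$, the matching-pair hypothesis controls all of these twisted summands at once, and that for a weight $\kappa$ not dominant for $\levi$ no coordinate of $\Phi^\kappa_{x_0}$ is left uncontrolled. A secondary point is to check that the $\uu$-expansion principle genuinely holds modulo $p^m$, and not merely set-theoretically over $\W$, which follows by running the arguments of Propositions \ref{psi} and \ref{prop-st} over $\W_m$.
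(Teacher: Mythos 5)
The paper states Corollary \ref{cong-mult-cor} without proof, presenting it as flowing directly from Propositions \ref{psi}, \ref{prop-st}, and \ref{localsplit} and from the injectivity of $\Phi^\kappa_{x_0}=\prod_{x,\kappa'}\Phi^{\kappa,\kappa'}_x$ established just above it; your proposal correctly identifies exactly these inputs, so the approach matches the paper's intent. Your part (1) argument is complete and correct: the parameters $\beta^*_x$ are isomorphisms, so the vanishing of all refined expansions $f^{(\kappa')}_x(\uu)$ is literally the vanishing of $\Phi^\kappa_{x_0}(f)$, whence $f=0$ by injectivity over the (connected) component $\cS$.

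For part (2) your outline is in the right direction --- the $\torus(\Z_p)$-equivariance $\loc_x(g\cdot f)=\loc_{x^g}(f)$ together with $\loc_x(g\cdot f)=\kappa'(g)\loc_x(f)$ on the $\kappa'$-eigencomponents, Proposition \ref{prop-st}(3) applied componentwise, and the reassembly via the canonical local splitting $\omega^\kappa_{x_0}\cong\oplus_{\kappa'\in\M_\kappa}\uo^{\kappa'}_{x_0}$ of Proposition \ref{localsplit}. The one place your writeup stays looser than it should is exactly the point you flag yourself: the hypothesis only constrains pairs $(\kappa'_1,\kappa'_2)$ that are congruent mod $p^m$, and you do not resolve whether a component $\kappa'_1\in\M_{\kappa_1}$ with no congruent partner in $\M_{\kappa_2}$ could spoil the conclusion. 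In the paper's framework this is handled by the meaning of ``$f_1\equiv f_2\bmod p^m$'' being mediated by the same local decomposition (as the remark immediately following the corollary, on necessity under a distinctness-mod-$p^m$ hypothesis, indicates); so the statement is consistent, but a polished writeup would make the target of the congruence explicit before invoking Proposition \ref{prop-st}(3) on each matched $\torus(\Z_p)$-eigenpiece. As written, your argument is essentially correct and is the argument the paper leaves implicit, just with the bookkeeping in part (2) not fully closed out.
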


\begin{rmk}
The congruence condition given in Part \eqref{cor-item-2} is both necessary and sufficient if, furthermore, we assume  that, for each $i=1,2$, the weights $\kappa'\in\M_{\kappa_i}$ are all distinct modulo $p^m$.  
\end{rmk}

\section{Structure theorems in the $\mu$-ordinary case}\label{structure-mo}
This section develops structural results concerning the Gauss--Manin connection, the Kodaira--Spencer morphism, and a canonical complement to $\omega$ in the $\mu$-ordinary setting, as needed in Section \ref{do-mo} to construct differential operators, which we use in the subsequent sections to construct new $p$-adic automorphic forms and families of $p$-adic automorphic forms.

{For simplicity, we assume $B=F$. By the assumption that the prime $p$ is unramified in $B$, the general case follows from this special case by Morita equivalence.}

  \subsection{Standard constructions}  This section recalls the definitions of the Gauss--Manin connection and the Kodaira--Spencer morphism.
Throughout this section, we denote by $S$ a smooth scheme over a scheme $T$ and by $\pi: X\rightarrow S$ a proper morphism of schemes.  For any such schemes, we denote by $\Omega_{X/S}^\bullet$ the complex $\wedge^\bullet \Omega_{X/S}^1$ on $X$ whose differentials are induced by the canonical K\"ahler differential $\CO_{X/S}\rightarrow \Omega^1{X/S}$.  The de Rham complex $\left(\Omega_{X/S}^\bullet, d\right)$ admits a canonical filtration
\begin{align}\label{filtrationGM}
\Fil^i\left(\Omega_{X/T}^\bullet\right):=\Image\left(\pi^*\Omega_{S/T}^i\otimes_{\CO_X}\Omega_{X/T}^{\bullet-i}\rightarrow\Omega_{X/T}^\bullet\right).
\end{align}
For $\pi$ smooth, the sequence
\begin{align*}
0\rightarrow\pi^*\Omega_{S/T}^1\rightarrow\Omega_{S/T}^1\rightarrow\Omega_{X/S}^1\rightarrow 0
\end{align*}
is exact, and the associated graded objects of the canonical filtration \eqref{filtrationGM} are
\begin{align*}
\Gr^i:=\Gr^i\left(\Omega_{X/T}^\bullet\right)\cong\pi^*\Omega_{S/T}^i\otimes_{\CO_X}\Omega_{X/S}^{\bullet-i}.
\end{align*}

\subsubsection{Gauss--Manin connection}
The Gauss--Manin connection
\begin{align*}
\nabla: \hdr^q\left(X/S\right)\rightarrow\hdr^q\left(X/S\right)\otimes_{\CO_S}\Omega_{S/T}^1
\end{align*}
is the map
\begin{align*}
d_1^{0, q}: E_1^{0, q}\rightarrow E_1^{1, q},
\end{align*}
where 
\begin{align*}
E_1^{p, q}=\mathbb{R}^{p+q}\pi_*\left(\Gr^p\right)\cong\Omega_{S/T}^p\otimes_{\CO_S}\hdr^q\left(X/S\right)
\end{align*}
is the first page of the spectral sequence ($E_r^{p, q}$, which converges to $\mathbb{R}^q\pi_*\left(\Omega_{X/T}^\bullet\right)$) obtained from the filtration \eqref{filtrationGM}.  We are interested in the case $q=1$, i.e.
\begin{align*}
\nabla: \hdr^1\left(X/S\right)\rightarrow\hdr^1\left(X/S\right)\otimes_{\CO_S}\Omega_{S/T}^1
\end{align*}

\subsubsection{Kodaira--Spencer morphism}\label{KS-class}

We briefly review the Kodaira--Spencer morphism here.  Details are available in \cite{CF, lan, EDiffOps, E09, EFMV}.  Let $\pi: A\rightarrow S$ be a smooth proper morphism of schemes (with $S$ still as above), and suppose $A$ is an abelian scheme with polarization $\lambda: A\rightarrow A\dual$.  For any such $A$, we define
\begin{align*}
\omega_{A/S}:=\pi_*\Omega_{A/S}^1.
\end{align*}
By taking the first hypercohomology of the exact sequence
\begin{align*}
0\rightarrow\Omega_{A/S}^{\bullet\geq 1}\rightarrow\Omega_{A/S}^\bullet\rightarrow\CO_A\rightarrow0,
\end{align*}
we obtain an exact sequence
\begin{align*}
0\rightarrow\omega_{A/S}\xrightarrow{\iota_{\omega_{A/S}}}\hdr^1\left(A/S\right)\xrightarrow{p_{A\dual/S}}\omega\dual_{A\dual/S}\rightarrow 0,
\end{align*}
with $\iota_{\omega_{A/S}}$ denoting inclusion.
The Kodaira--Spencer morphism $KS$ is the composition of morphisms
\begin{align*}
 \xymatrix{ 
 \hdr^1\left(A/S\right) \otimes\omega_{A\dual/S}\ar[rr]^{\nabla\otimes\iota_{A\dual/S}\qquad} &&
\left(\hdr^1\left(A/S\right)\otimes\Omega^1_{S/T}\right)\otimes\omega_{A\dual/S} \ar@{->>}[rr]^{\quad \left(p_{A\dual/S}\otimes\id\right)\otimes_{\omega_{A\dual/S}}}   &&
\omega_{A\dual/S}\dual\otimes \Omega^1_{S/T} \otimes\omega_{A\dual/S} \ar@{->>}[d]^{} \\
\omega_{A/S}\otimes\omega_{A\dual/S}\ar@{^{(}->}[u]^{\iota_{\omega_{A/S}}\otimes\id} \ar@{-->>}[rrrr]^{KS} & &&&
\Omega^1_{S/T} }
\end{align*}
with the vertical surjection denoting the canonical pairing
\begin{align*}
\omega_{A\dual/S}\dual\otimes\omega_{A\dual/S}\rightarrow\CO_S
\end{align*}
tensored with the identity map on $\Omega_{S/T}^1$.  Identifying $\omega_{A/S}$ with $\omega_{A\dual/S}$ via the polarization $\lambda: A\rightarrow A\dual$, we also view $KS$ as a morphism
\begin{align*}
KS: \omega_{A/S}\otimes_{\CO_S}\omega_{A/S}\twoheadrightarrow \Omega_{S/T}^1.
\end{align*}
The action of $\Ocmfield$ on $A$ induces a decomposition
\begin{align*}
\omega_{A/S} = \bigoplus_{\tau\in\T}\omega_{A/S, \tau}.
\end{align*}
 By \cite[Proposition 2.3.5.2]{lan}, $\KS$ induces an isomorphism
\begin{align*}
\ks: \omega^2_{A/S}\isomto\Omega_{S/T}^1,
\end{align*}
where
\begin{align*}
\omega^2_{A/S}&:=(\omega_{A/S}\otimes_{\CO_F\otimes_{\CO_T}\CO_S}\omega_{A^\vee/S})^{\lambda-sym}\\
&:= (\omega_{A/S}\otimes_{\CO_S}\omega_{A^\vee/S})/
\langle 
\begin{matrix}\lambda(y)\otimes z-\lambda(z)\otimes y\\
b x\otimes y-x\otimes b^\vee y
\end{matrix}\vert  
\begin{matrix} x\in \omega_{A/S}\\ y,z\in\omega_{A^\vee/S}\\ b\in\CO_F\end{matrix} \rangle.
\end{align*}
In particular, 
\begin{align*}
\omega^2_{A/S}&=(\bigoplus_{\tau\in\T} \omega_{A/S,\tau}\otimes_{\CO_S}\omega_{A^\vee/S,\tau})^{\lambda-sym}\\
&:=(\bigoplus_{\tau\in\T} \omega_{A/S,\tau}\otimes_{\CO_S}\omega_{A^\vee/S,\tau})/\langle 
\lambda(y)\otimes z-\lambda(z)\otimes y | y,z\in\omega_{A^\vee/S}\rangle\\
&\simeq \bigoplus_{\tau\in\T_0} \omega_{A/S,\tau}\otimes_{\CO_S}\omega_{A^\vee/S,\tau}.
\end{align*}

\subsection{A canonical complement to $\omega$ over $\cS$}\label{ur-section}
The constructions of $p$-adic differential operators in, for example, \cite{kaCM, EDiffOps, EFMV} rely on the unit root splitting discussed in \cite{kad}.  In the more general $\mu$-ordinary case, we need to work with a complement to $\omega$ that is larger than just the unit root piece and whose existence follows from work in \cite{moonen}.  To emphasize the connection with this earlier setting, we still use the notation $U$ (even though $U$ is precisely the unit root when the ordinary locus is nonempty).

Let ${\hdr^1}_\bullet$
denote the Dieudonn\'e crystal of $\CA$ over $\cS$ and $\omega_\bullet\subset {\hdr^1}_\bullet$ its Hodge filtration, both equipped with the slope filtration.  Following \cite{kad}, we deduce the existence  over $\cS$ of a canonical splitting of the sequence
$$0\rightarrow\omega_\bullet \subseteq {\hdr}^1_\bullet \rightarrow (\hdr^1/ \omega)_\bullet\rightarrow 0.$$

\begin{prop}\label{unitroot} There exists a unique submodule $U$ of ${\hdr^1}$ such that
\begin{enumerate}
\item  $U$ is $\Phi^{\mathbf e}$-stable, where $\mathbf{e}={\rm lcm}_{\co\in \mathfrak{O}} e_\co$.
\item $U$ is $\nabla$-horizontal, i.e., $\nabla(U)\subset U\otimes\Omega^1_{\cS/\W}$.
\item $U$  is a complement to $\omega$, i.e., $\hdr^1=\omega\oplus U$.\label{ur3}
\end{enumerate}
Moreover,  the filtration $U_\bullet$ of $U$, induced by the slope filtration on  ${\hdr^1}$, satisfies
$$(\hdr^1)_j=\omega_j\oplus U_j,$$
for each slope $j$;
and it is canonically split over $S_{x_0}^\wedge$, for each point $x_0\in \cS(\F)$.
\end{prop}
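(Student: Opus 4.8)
The plan is to produce $U$ directly from the slope filtration of $\hdr^1$ rather than by the purely Frobenius-theoretic limiting process of \cite{kad}. In the $\mu$-ordinary case $\omega$ is in general \emph{not} the sum of the graded pieces of $\hdr^1$ of slope above some threshold (the threshold varies along a Frobenius orbit, since $\cf$ does), so the naive contraction argument does not separate $\omega$ from its complement; one instead builds $U$ compatibly with the filtration. By Propositions~\ref{csd} and~\ref{slopefiltration} the slope filtration of $\CA[p^\infty]$, hence of $\hdr^1$, exists as a sub-crystal and lifts canonically to $\cS$; it is $\Phi$-stable and, since the $\CO_F$-action is defined over the base, respects the decomposition $\hdr^1=\oplus_{\tau\in\T}M_\tau^{\oplus r}$, inducing on each $M_\tau$ a filtration $M_{\tau,\bullet}$ by sub-crystals with isoclinic graded pieces $M_\tau^i$ of pairwise distinct slopes. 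Using the notation of Remark~\ref{details}, I would then set $U:=\bigoplus_{\co\in\FO}\bigoplus_{\tau\in\co}(M_{\tau,i_\tau})^{\oplus r}\subset\hdr^1$: in each $M_\tau$ one takes the bottom $i_\tau$ steps of the slope filtration, a sub-crystal of rank $n-\cf(\tau)$ in the $\tau$-component.

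Granting this, the three properties go as follows. Property~(2): the slope filtration is $\nabla$-horizontal (this is where the statements of \cite{KatzSlope} and \cite{moonen} must be carried through for the present family), and each $M_\tau$ is $\nabla$-stable by $\CO_F$-linearity of the Gauss--Manin connection, so each $M_{\tau,i_\tau}$ is $\nabla$-horizontal. Property~(1): $M_{\tau,i_\tau}$ is preserved by $\Phi^{e_\co}$, the power of Frobenius carrying $M_\tau$ back to itself, since that map preserves the slope filtration step by step; because $\cf$ need not be constant on $\co$, $\Phi$ itself only carries $M_{\tau,i_\tau}$ into $M_{\tau\sigma,i_\tau}$, which is exactly why one must pass to the common power $\Phi^{\mathbf e}$, $\mathbf e=\mathrm{lcm}_{\co\in\FO}e_\co$. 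Property~(3) and the refinement $(\hdr^1)_j=\omega_j\oplus U_j$: one has $\rk M_{\tau,i_\tau}=n-\cf(\tau)=n-\rk\omega_\tau$, and directness of the sum is checked on each formal completion $\cS^\wedge_{x_0}$, $x_0\in\cS(\F)$, where the slope filtration splits canonically and, by the computation in the proof of Proposition~\ref{localsplit}, $\omega_\tau=\oplus_{i>i_\tau}M_\tau^i$ is complementary to $M_{\tau,i_\tau}=\oplus_{i\le i_\tau}M_\tau^i$; since the $\cS^\wedge_{x_0}$ cover $\cS$ this propagates to $\cS$. The canonical splitting of $U_\bullet$ over $S_{x_0}$ is then read off from the canonical splitting of the slope filtration there.

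For uniqueness I would argue as follows. Let $U'$ be another $\CD$-submodule of $\hdr^1$ with properties (1)--(3); being $\CD$-linear it is $\CO_F$-stable, so $U'=\oplus_{\tau}(U'_\tau)^{\oplus r}$ with each $U'_\tau$ a $\Phi^{e_\co}$-stable, $\nabla$-horizontal complement of $\omega_\tau$ in $M_\tau$. Over $\cS^\wedge_{x_0}$, horizontality makes $U'_\tau$ a constant subbundle for the canonical framing of $M_\tau$, and, the graded pieces $M_\tau^i$ having pairwise distinct $\Phi^{e_\co}$-slopes, $\Phi^{e_\co}$-stability forces $U'_\tau$ to be a sum of full graded pieces; complementarity to $\omega_\tau=\oplus_{i>i_\tau}M_\tau^i$ then pins $U'_\tau$ down to $M_{\tau,i_\tau}$. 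As this holds at every $x_0$, $U'_\tau=M_{\tau,i_\tau}$, so $U'=U$.

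The main obstacle will not be the bookkeeping above but the two geometric inputs it rests on: first, that the slope filtration --- provided by \cite{zink} and \cite{mantovan1} only as a sub-\BT group over $\bS$ --- lifts to a genuinely $\nabla$-horizontal sub-crystal over the characteristic-zero formal scheme $\cS$, where the crystalline content of \cite{kad} and \cite{KatzSlope} has to be pushed through in the $\mu$-ordinary family; and second, the identification $\omega_\tau=\oplus_{i>i_\tau}M_\tau^i$ over formal completions, which rests on Moonen's explicit description of $\X^{\can}$ and on the combinatorics of the multiplicative type recorded in Remark~\ref{details}. With those in hand, $U$ is the larger $\Phi^{\mathbf e}$-stable horizontal complement that plays, in the $\mu$-ordinary setting, the role of the unit-root submodule in the ordinary one.
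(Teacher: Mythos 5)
Your construction of $U$ (as the $\tau$-indexed sum of the $i_\tau$-th steps of the slope filtration) is exactly the paper's, which writes $U_\co = \oplus_\tau M_{\iota(\tau),\tau}$ with $\iota(\tau)=i_\tau$; the paper then dispatches properties (1)--(3) and uniqueness as ``immediate consequences'' of the slope-filtration properties established in Proposition~\ref{slopefiltration} and \cite[Proposition~2.1.9]{moonen}, while you fill in the same verifications in detail. The argument is correct and follows the paper's route; your observation that $\Phi$ itself carries $M_{\tau,i_\tau}$ into $M_{\tau\sigma,i_\tau}$ rather than $M_{\tau\sigma,i_{\tau\sigma}}$ is a useful gloss on why the statement requires $\Phi^{\mathbf e}$ rather than $\Phi$.
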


\begin{proof}
We use the notation introduced in the proof of Proposition \ref{localsplit}. We construct $U=\oplus_\co U_\co$, with $U_\co\subset M_\co$ a complement to $\omega_\co$. Its uniqueness follows from the listed properties.

Fix $\co$, $e=e_\co$, and write $M=M_\co$, $\omega=\omega_\co$ and construct $U=U_\co$. Let $M_\bullet$ denote the slope filtration of $M$, then
we define $$U:=\oplus_\tau M_{\iota(\tau),\tau}.$$
The stated properties are an immediate consequence of the definition of $U$, and the properties of the slope filtration (see Proposition \ref{slopefiltration}, and \cite[Proposition 2.1.9]{moonen}).
 \end{proof}

In the following we write $\U$ to denote the graded sheaf associated with $U$, and we canonically identify $\U_{x_0}\cong U_{x_0}$ over $\CO^\wedge_{\cS,x_0}$, for each $x_0\in\cS(\F)$.

\subsection{The Gauss--Manin connection}
We extend Katz's computation of the Gauss--Manin connection over the ordinary locus to the $\mu$-ordinary case.

Consider the operator $$\nabla: \omega\subset \hdr^1\to \hdr^1\otimes \Omega^1_{\cS/\W}.$$
It preserves the slope filtration, in particular it induces an operator on the graded sheaves
$\underline{\nabla}: \uo\to\gr(\hdr^1)\otimes  \Omega^1_{\cS/\W}.$

\begin{prop}\label{horizontalbasis}\label{GMST}
Maintain the above notation.  
We denote by $\alpha$ 
the isomorphism of sheaves over the Igusa tower,  $$\alpha: \uo_{\X}\otimes_{\W}\CO_{\Ig} \to \uo,$$
induced by the universal Igusa level structure.

Then for any  $\eta\in \omega_{\X}$: 
$\underline{\nabla}(\alpha(\eta))\in \U \otimes\Omega^1_{\cS/\W}.$

Furthermore, for each $x\in \Ig(\F)$ and each $\eta\in \omega_{\X}$, via the canonical splitting $\omega_x\cong \uo_x$, we have 
$\nabla_x (\alpha(\eta))\in U_x \otimes\Omega^1_{\cS/\W, x}$.
\end{prop}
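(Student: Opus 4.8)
The plan is to reduce the statement to a purely local computation in Moonen's canonical coordinates and to use the Dwork–Katz trick characterizing horizontal sections, exactly as in Katz's treatment of the ordinary locus in \cite{kaCM}, but now carried out levelwise with respect to the slope filtration. First I would recall that over the Igusa tower the universal Igusa level structure gives a canonical trivialization $\alpha\colon \uo_{\X}\otimes_{\W}\CO_{\Ig}\isomto\uo$, and more generally (via the canonical lifting $\X^{\can}$ and Proposition \ref{slopefiltration} together with \cite[Proposition 2.1.9]{moonen}) a trivialization of $\gr(\hdr^1)$ compatible with the slope filtration. The key point is that under this trivialization the horizontal submodule $U$ from Proposition \ref{unitroot} is identified, levelwise, with a constant submodule coming from $\X^{\can}$: indeed $U$ is $\Phi^{\mathbf e}$-stable, $\nabla$-horizontal, and a complement to $\omega$, and these three properties pin it down uniquely, so it must be the span of the ``unit-root-type'' part of the canonical lifting's crystal (the $M_{\iota(\tau),\tau}$ in the notation of the proof of Proposition \ref{localsplit}).

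The main step is then the following: fix $\eta\in\omega_{\X}$ and consider $\underline{\nabla}(\alpha(\eta))\in\gr(\hdr^1)\otimes\Omega^1_{\cS/\W}$. Writing $\gr(\hdr^1)=\uo\oplus\U$ (the graded version of Proposition \ref{unitroot}), I must show the $\uo$-component of $\underline{\nabla}(\alpha(\eta))$ vanishes. Here I would argue as Katz does: work modulo $p^m$ over $\Ig_{m,m}$, where $\alpha(\eta)$ is a basis vector of the trivialized sheaf $\uo_{\X}\otimes\CO_{\Ig}$; the Gauss--Manin connection in these canonical coordinates is computed by Moonen (this is precisely the content I would cite from \cite[Section 2.1.7]{moonen}, which computes $\nabla$ in terms of the parameters $\uu$), and the matrix of $\nabla$ in the trivialized basis is, after projecting to the associated graded, supported entirely in the $\U$-direction — this is the slope-filtration analogue of the statement that in the ordinary case the Gauss--Manin connection sends the canonical differentials into the unit root piece. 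Equivalently, one can run the Dwork–Katz argument intrinsically: $\alpha(\eta)$ is (modulo the slope filtration) a $\Phi^{\mathbf e}$-``eigenvector'' in a suitable sense because it comes from the canonical lifting where all endomorphisms lift (Proposition \ref{canonicallift}), and $\nabla$ commutes with $\Phi$ up to the expected Frobenius twist, so the $\uo$-component of $\underline\nabla(\alpha(\eta))$ is forced to be $p$-divisibly small at every level, hence zero.

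For the ``furthermore'' clause I would specialize to a point $x\in\Ig(\F)$ lying over $x_0\in\cS(\F)$. Over the formal neighborhood $\cS^\wedge_{x_0}$ the slope filtration on $\omega$ (and on $\hdr^1$) is canonically split by Proposition \ref{localsplit} and Proposition \ref{unitroot}, so the canonical identifications $\omega_x\cong\uo_x$ and $U_x\cong\U_x$ are available; transporting the graded statement $\underline\nabla(\alpha(\eta))\in\U\otimes\Omega^1_{\cS/\W}$ through these canonical splittings gives $\nabla_x(\alpha(\eta))\in U_x\otimes\Omega^1_{\cS/\W,x}$ directly, since the splitting is compatible with $\nabla$ on the nose over $\cS^\wedge_{x_0}$ (that compatibility is exactly property \eqref{ur3} of Proposition \ref{unitroot} together with $\nabla$-horizontality of $U$).

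The step I expect to be the main obstacle is the precise bookkeeping in the middle paragraph: extracting from Moonen's description of $\nabla$ in the canonical parameters $\uu$ that, after passing to the associated graded of the slope filtration, the connection matrix of the $\alpha(\eta)$ has \emph{no} $\uo$-component. In the ordinary case this is classical and clean (it is the defining property of the unit root splitting); in the $\mu$-ordinary case one has a nontrivial filtration and one must check that the ``off-diagonal'' blocks of $\nabla$ respect the grading in the right way. I would handle this by reducing to each $\sigma$-orbit $\co$ separately (the decomposition $\hdr^1=\oplus_\co M_\co$ is $\nabla$-stable), and then invoking Moonen's explicit formulas \cite[Section 2.1.7]{moonen} orbit by orbit, which is where the bulk of the technical work lives.
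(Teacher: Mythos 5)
Your strategy matches the paper's: reduce orbit by orbit to the complete local ring at a $\mu$-ordinary point, work in Moonen's canonical coordinates $\underline{u}$ with the explicit basis $\{\alpha^\tau_j\}$ of the Dieudonn\'e crystal, and use the compatibility of $\nabla$ with Frobenius to force the offending entries of the connection matrix to be infinitely $p$-divisible, hence zero. The ``furthermore'' clause is handled the same way in both (transport through the canonical local splitting). So you have the right architecture.

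But the step you yourself flag as ``the main obstacle'' is indeed where essentially all the content of the proof lives, and your proposal does not supply it. Saying that $\alpha(\eta)$ is ``modulo the slope filtration a $\Phi^{\mathbf e}$-eigenvector'' and that therefore the $\uo$-component of $\underline\nabla(\alpha(\eta))$ is ``$p$-divisibly small'' is a heuristic, not an argument: the eigenvector relation $\Phi^e_\tau(\alpha^\tau_j)=p^{a(j)}\alpha^\tau_j$ holds only over $\W$, not over $\cR=\W[\![\underline{u}]\!]$, and the Dwork--Katz bootstrapping requires the full matrix of $\Phi$ (not just its restriction to the special fiber). The paper fills this in concretely by writing down, from \cite[Proposition 2.1.9]{moonen}, the exact functional equation
\[
D^\tau_{v,j}+\sum_r u^\tau_{r,j}D^\tau_{v,r}+du^\tau_{v,j}=d\phi\bigl(D^{\tau^{\sigma^{-1}}}_{v,j}\bigr)+\sum_l u^\tau_{v,l}\,d\phi\bigl(D^{\tau^{\sigma^{-1}}}_{l,j}\bigr),
\]
and then making two separate inputs: first the $\nabla$-horizontality of the slope filtration on $U$ (to kill the terms with $l\le n-\cf(\tau)$), and then the vanishing $u^\tau_{r,j}=0$ for $j>n-\cf(\tau)$, to collapse the equation to $D^\tau_{v,j}=d\phi^e(D^\tau_{v,j})$ when both $v,j>n-\cf(\tau)$, from which the vanishing follows by iterating modulo $p^m$. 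Without this explicit equation (and in particular without the horizontality input on $U^a$, which is not the same thing as $\alpha(\eta)$ being a Frobenius eigenvector), the $p$-divisibility claim does not reduce to a tautology, and your outline does not close the argument. So: right approach, but the decisive computation is asserted rather than carried out.
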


 \begin{proof}
 As in the proofs of Propositions \ref{localsplit} and \ref{unitroot}, consider the decomposition into subcrystals $\hdr^1=\oplus_\co M_\co$, fix $\co$, $e=e_\co$, and write $M=M_\co$, 
$\omega=\omega_\co$, 
and $U=U_\co$.

Note that that the statement can be checked locally, over the complete local ring $\cR$ at a point $x_0$ of $\cS_1$.  That is, without loss of generality, we may assume we are in the setting of \cite[Sections 2.1.7]{moonen} (in {\it loc. cit.} $\cR= \W[\![\underline{u}]\!]$). Furthermore, since $\nabla$ is $\W$-linear, it suffices to prove the statement for a choice of  $\W$-basis of $\omega_\X=\uo_\X$ compatible with the slope decomposition.

In \cite[Proposition 2.1.9]{moonen},  Moonen computes the matrix of 1-forms of the Gauss--Manin connection on $M$, with respect to an explicit choice of a basis of $M$.  
We observe that the chosen basis contains the image under $\alpha$ of (an explicit choice of) a $\W$-basis of $\omega_\X$.
We quickly recall Moonen's notation and results.

Let ${\mathcal B}=\{\alpha^\tau_j|\tau\in\co, j=1\dots, n\}$ denote the basis of $M$, $M=M_\X\otimes_\W \cR$, as defined in \cite[Section 1.2.3]{moonen} (in {\it loc. cit.} the element $\alpha^\tau_j$ are denoted by $e_{i,j}$, with $i=\tau$,  ${\mathfrak I}=\co$, and $d=n$).  
By definition, for each $\tau\in\co$, $\{\alpha^\tau_j|j=1,\dots, n\}$ is a basis of $M_{\X,\tau}$ over $\W$, such that, for each $j=1,\dots ,n$,
$$\Phi_\tau^{e}(\alpha^\tau_j)=p^{a(j)}\alpha^\tau_j,$$ where $a(j)=\#\{\tau|\cf(\tau)>n-j\}$. 
In particular, $\{\alpha^\tau_j\mid j>n-\cf(\tau)\}$ is a basis of $\omega_\tau\subset M_\tau$, which arises as the image under $\alpha$ of a $\W$-basis of $\omega_\X$. 
Note that, for each slope $a=a_i$, we have $\alpha^\tau_j\in M_a$ if and only if $j>n-F_i$, for all $\tau\in\co$, and $U^a:=\oplus_\tau U^a_\tau$ has basis $\{\alpha^\tau_j| j\leq n-F_i, \tau\in\co\}$.

As in \cite[Proposition 2.1.9]{moonen}, we denote  by $D^\tau=(D^\tau_{r,s})_{r,s=1,\dots ,n}$ the matrix of 1-forms of $\nabla$ with respect to the given basis $\{\alpha^\tau_j |j=1,\dots, n\}$ of $M_\tau$, $\tau \in\co$. 
To  prove our statement,   it suffices to check that, for each $\tau\in\co$, and for all $  j>n-\cf(\tau)$,  $$\nabla(\alpha^\tau_j)\equiv 0\mod U \otimes \Omega^1, $$ or equivalently, $D^\tau_{v,j}=0$ if $j>n-\cf(\tau)$ and $v>n-\cf(\tau)$.

Following {\it loc. cit.}  the inclusions $\nabla (U^a)\subset U^a\otimes \Omega^1,$ $a=0,\dots, e$, imply $D^\tau_{v,j}=0$ if  $v\leq  n-F_i$ and $j>n-F_i$, for all $\tau\in\co$, and $i=0,1,\dots ,s$.
Also, from the equality $\nabla\circ F=(f\otimes {\rm id})\circ F$, we obtain, 
for all $\tau\in\co$ and $j, v\in\{1, \dots, n\}$,
$$D_{v,j}^\tau+\sum_{r=1}^n u_{r,j}^\tau D_{v,r}^\tau+du_{v,j}^\tau=d\phi(D^{\tau^{\sigma^{-1}}}_{v,j})+\sum_{l=1}^du_{v,l}^\tau \cdot d\phi(D_{l,j}^{\tau^{\sigma^{-1}}}),$$
 where $u^\tau_{r,s}:=0$ if either $r\leq n-\cf(\tau)$ or $s>n-\cf(\tau)$. (Recall that $\phi$ on $\cR$ is defined by $\phi_{\vert\W}=\sigma$ and $\phi(u^\tau_{r,s})=(u^\tau_{r,s})^p$.)

Fix $\tau\in\co$  and assume $j>n-\cf(\tau)$ and $v>n-\cf(\tau)$. From  $j>n-\cf(\tau)$, we deduce  $u^\tau_{r,j}=0$ for all $r$, and also $D^{\tau'}_{l,j}=0$ for $l\leq n-\cf(\tau)$, for all $\tau'$.
Thus, the above equalities become
$$D_{v,j}^\tau = d\phi(D^{\tau^{\sigma^{-1}}}_{v,j})+\sum_{l>n-\cf(\tau)}^du_{v,l}^\tau \cdot d\phi(D_{l,j}^{\tau^{\sigma^{-1}}})=d\phi(D^{\tau^{\sigma^{-1}}}_{v,j}),$$
which implies the equation $D_{v,j}^\tau=d\phi^e (D^\tau_{v,j}).$ 
We deduce that $D^\tau_{v,j}\equiv 0 \mod p^m$, for all $m\geq 1$, and conclude that $D^\tau_{v,j}=0$, for all  $j>n-\cf(\tau)$ and $v>n-\cf(\tau)$.
\end{proof}

\subsection{The Kodaira--Spencer morphism}

We study the Kodaira--Spencer morphism over the $\mu$-ordinary Igusa tower.  Let $\CA$ denote the universal abelian scheme over the Igusa tower $\Ig$ over $\W$, and let $\omega=\omega_{\CA/\Ig}$.  Recall the notations from section \ref{KS-class}.
Write \begin{align*}
\omega^2&:=(\omega_{\CA/\Ig}\otimes_{\CO_F\otimes_\W \CO_{\Ig}} \omega_{\CA ^\vee/\Ig})^{\lambda-sym}
 \cong (\omega\otimes_{\CO_F\otimes_\W \CO_{\Ig}}U^\vee)^{\lambda-sym}\\
&\cong (\bigoplus_{\tau\in\T}  \omega_\tau \otimes_{\CO_\Ig}U^\vee_\tau)^{\lambda-sym}
\cong \bigoplus_{\tau\in\T_0} \omega_\tau \otimes_{\CO_\Ig}U^\vee_\tau.
\end{align*}
For each orbit $\co\in\mathfrak{O}$, write $s=s_\co=s_\tau$, for $\tau\in\co$. For each $i,j\in\{0,\dots, s\}$, and $\tau\in\co$ we define
$$\gr_\tau^{i,j}(\omega\otimes U^\vee):=\gr^i(\omega)_\tau\otimes_{\CO_{\Ig}} \gr^j(U)^\vee_\tau$$
and  $\gr_\co^{i,j}(\omega\otimes U^\vee):=\bigoplus_{\tau\in\co} \gr_\tau^{i,j}(\omega\otimes U^\vee).$ 
We also set $\uo^2:=\gr(\omega^2)$, i.e., 
\[\uo^2=(\bigoplus_{\tau\in\T} \uo_\tau\otimes\U_\tau^\vee)^{\lambda-sym}\cong\bigoplus_{\tau\in\T_0}\uo_\tau\otimes\U_\tau^\vee,\] where
$\uo_\tau\otimes\U_\tau^\vee = \bigoplus_{0\leq i,j\leq s_\tau}\gr_\tau^{i,j}(\omega\otimes U^\vee).$

\begin{remark}
By construction, for each $\tau \in \T$, the sheaf $gr_\tau^{s_\tau,0} (\omega\otimes U^\vee)$ arises as a quotient of $\omega_\tau\otimes U^\vee_\tau$. 
\end{remark}

\begin{prop}
\label{GRVANISH}
For each orbit $\co$, and integers $i,j \in\{0,\dots,,s_\co\}$, each sheaf  $\gr^{i,j}_\co(\uo^2)$ vanishes for all $i\leq j$ and is locally free of rank 
$(F_i-F_{i+1})(F_j-F_{j+1})(a_i-a_j)$ for all $j<i$.
\end{prop}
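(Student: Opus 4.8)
The plan is to reduce everything to the (local) freeness and rank of the slope-graded pieces of $\omega$ and of $U$ — which are essentially already determined by Section~\ref{UO} and the proofs of Propositions~\ref{localsplit} and~\ref{unitroot} — and then to read both the vanishing and the rank formula off of the combinatorics of the multiplicative type $\cf$. Fix the orbit $\co$, write $e=e_\co$, and for $\tau\in\co$ let $i_\tau$ be the index with $\cf(\tau)=F_{i_\tau}$, in the notation of Remark~\ref{details}, so that the distinct slopes of $\nu_\co(n,\cf)$ are $a_i=\lambda_i=\sum_{k=0}^{i}d_k$ with $d_k=\#\{\tau\in\co\mid\cf(\tau)=F_k\}$.

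The first step is to record the ranks of the two tensor factors of $\gr^{i,j}_\tau(\omega\otimes U^\vee)=\gr^{i}(\omega)_\tau\otimes_{\CO_{\Ig}}\gr^{j}(U)^\vee_\tau$. Over the $\mu$-ordinary Igusa tower the universal level structure trivializes $\gr(\CA[p^\infty])$, and with it $\gr(\hdr^1)$; concretely (Section~\ref{UO}), $\gr^{t}(\omega)_\tau=\gr^{t}(\omega_\tau)$ is free of rank $m^\tau_t=F_t-F_{t+1}$ when $t\geq i_\tau$ and vanishes otherwise. By Proposition~\ref{unitroot}, $U_\tau=M_{i_\tau,\tau}$ is the first $i_\tau$ steps of the slope filtration of the $\tau$-component $M_\tau$ of $\hdr^1$, so $\gr^{j}(U)_\tau$ is the slope-$\lambda_j$ graded piece of $M_\tau$ when $j<i_\tau$ — free of rank $m^\tau_j=F_j-F_{j+1}$ — and vanishes when $j\geq i_\tau$. (These conclusions descend to $\cS$ along the faithfully flat cover $\Ig\to\cS$, so the statement holds there as well.)

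Then I would assemble the pieces. It follows that $\gr^{i,j}_\tau(\omega\otimes U^\vee)$ is locally free, is nonzero exactly when $i\geq i_\tau$ and $j<i_\tau$, i.e., when $j<i_\tau\leq i$, and in that case has rank $(F_i-F_{i+1})(F_j-F_{j+1})$. Since $j<i_\tau\leq i$ forces $j<i$, the vanishing is immediate: if $i\leq j$ there is no $\tau\in\co$ with $j<i_\tau\leq i$, so every summand of $\gr^{i,j}_\co(\uo^2)=\bigoplus_{\tau\in\co}\gr^{i,j}_\tau(\omega\otimes U^\vee)$ is zero. For $j<i$, the nonzero summands are precisely those indexed by the $\tau\in\co$ with $i_\tau\in\{j+1,\dots,i\}$, and their number is $\sum_{k=j+1}^{i}d_k=a_i-a_j$ by Remark~\ref{details}. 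Hence $\gr^{i,j}_\co(\uo^2)$ is locally free of rank $(F_i-F_{i+1})(F_j-F_{j+1})(a_i-a_j)$, as claimed.

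I do not expect a genuine obstacle here: the argument is bookkeeping on top of the structural results of Sections~\ref{tower-section}--\ref{structure-mo}. The two points that need a little care are keeping straight the index shift between the slope filtration $0=M_0\subset M_1\subset\cdots$ and the grading index of $\gr^t$, and verifying that the graded pieces are locally free over the whole base rather than only at $\mu$-ordinary geometric points; both are handled by passing to the $\mu$-ordinary Igusa tower, where $\gr(\hdr^1)$, and hence $\gr(\omega)$ and $\gr(U)$, are explicitly trivialized, and then descending along $\Ig\to\cS$.
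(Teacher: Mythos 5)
Your proof is correct and follows essentially the same route as the paper's: both reduce to computing the ranks of the slope-graded pieces $\gr^i(\omega)_\tau$ and $\gr^j(U)_\tau$, identify the non-vanishing condition $j<i_\tau\leq i$ (equivalently $F_j>\cf(\tau)\geq F_i$) per $\tau$, and then count $\#\{\tau\in\co\mid j<i_\tau\leq i\}=\sum_{k=j+1}^{i}d_k=a_i-a_j$ to get the rank $(F_i-F_{i+1})(F_j-F_{j+1})(a_i-a_j)$. (Incidentally your rank $m_i=F_i-F_{i+1}$ is the correct one; the paper's proof has a typo writing $F_i-F_{i-1}$ at one point.)
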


\begin{proof}
As the sheaves we consider are locally free, it suffices to prove the statement locally at a point $x_0\in\cS_1$.
Fix $\co$, write $s=s_\co$. It follows from the properties of the slope filtration that for each 
$\tau \in\co$ and $i,j=0,\ldots, s$, the sheaf $\gr^i(\omega)_\tau$ vanishes for  $\cf(\tau)< F_i$, and $\gr^i(\omega)_\tau= \gr^i(M)_\tau$ otherwise. Similarly, $\gr^j(U)_\tau$ vanishes for  $\cf(\tau)\geq F_j$, and  $\gr^j(U)_\tau= \gr^j(M)_\tau$  otherwise.
Thus, we deduce that 
$\gr_\tau^{i,j}(\uo^2):=\gr^i(\omega)_\tau \otimes_{\CO_\Ig} \gr^j(U)_\tau $ vanishes unless $F_j> \cf(\tau)\geq F_i$, in which case it is locally free of rank $(F_i-F_{i-1})(F_j-F_{j+1})=m_im_j$. In particular,  
$\gr_\co^{i,j}(\uo^2)=\bigoplus_{\tau\in\co} \gr_\tau^{i,j}(\uo^2)$ vanishes unless $j<i$,  in which case it is locally free of rank 
$(F_i-F_{i+1})(F_j-F_{j+1})(a_i-a_j)$, as $a_i-a_j=\#\{\tau\in\co| F_i\leq\cf(\tau)< F_j\}.$
\end{proof}

\begin{remark}\label{KS-sub}
Let $\co\in\mathfrak{O}_0$. Assume $\co\neq \co^*$. Then $\co\subset \T_0$, and the sheaf 
\[\gr_\co(\omega\otimes U^\vee):= \bigoplus_{\tau\in\co} \uo_\tau\otimes\U_\tau^\vee=\bigoplus_{0\leq j<i\leq s_\co} \gr_\co^{i,j}(\omega\otimes U^\vee)\] 
is as a direct summand of $\uo^2$, i.e., 
\[\gr_\co(\omega\otimes U^\vee)\cong \left(\gr_\co(\omega\otimes U^\vee)\bigoplus \gr_{\co^*}(\omega\otimes U^\vee)\right)^{\lambda-sym}\subset \uo^2.\] 

Assume $\co= \co^*$. Then $\co \not\subset \T_0$, and the sheaf
\[\gr_\co(\omega\otimes U^\vee)^{\lambda-sym}\cong \bigoplus_{\tau\in\co\cap\T_0} \uo_\tau\otimes\U_\tau^\vee\]
is as a direct summand of $\uo^2$.
In particular, the subsheaf of $\gr_\co(\omega\otimes U^\vee)^{\lambda-sym}$,
\[\gr^{\leq s_\co/2}_\co(\omega\otimes U^\vee):= \bigoplus_{0\leq j<i\leq s_\co/2} \gr_\co^{i,j}(\omega\otimes U^\vee), \]
is also a direct summand of $\uo^2.$  Indeed,  for any $i,j$, $0\leq j<i\leq s_\co/2$, we have 
\begin{align*} 
\gr_\co^{i,j}(\omega\otimes U^\vee)  &\cong  (\gr_\co^{i,j}(\omega\otimes U^\vee)\bigoplus \gr_\co^{s-j,s-i}(\omega\otimes U^\vee))^{\lambda-sym}\\
&\cong \bigoplus_{\tau\in\co\cap\T_0}\left(\gr_\tau^{i,j}(\omega\otimes U^\vee)\bigoplus \gr_\tau^{s-j,s-i}(\omega\otimes U^\vee)\right).\end{align*}

Similarly, the subsheaf of  $\gr_\co(\omega\otimes U^\vee)^{\lambda-sym}$   
\[\gr_\co^\Delta(\omega\otimes U^\vee):=\bigoplus_{0<j<s_\co/2} \gr_\co^{s-j,j}(\omega\otimes U^\vee)^{\lambda-sym}\]
is a direct summand of $\uo^2$. Indeed,  for an $j$, $0\leq j<s_\co/2$,
\[\gr_\co^{s-j,j}(\omega\otimes U^\vee)^{\lambda-sym} \cong \bigoplus_{\tau\in \co\cap\T_0}  \gr_\tau^{s-j,j}(\omega\otimes U^\vee).\]
\end{remark}

\begin{rmk}
If all the primes of $F_0$ above $p$ split in $F$, then each orbit $\co$ satisfies $\co\neq \co^*$, and \[\uo^2=\oplus_{\co\in\mathfrak{O}_0} \gr_\co(\omega\otimes U^\vee).\]
Yet, in general, 
\[\uo^2\supsetneq \bigoplus_{\co\in\mathfrak{O}_0,\, \co\neq \co^*} \gr_\co(\omega\otimes U^\vee) \oplus \bigoplus_{\co\in\mathfrak{O}_0,\, \co=\co^*} \left(\gr^{\leq s_\co/2}_\co(\omega\otimes U^\vee) \oplus \gr_\co^\Delta(\omega\otimes U^\vee) \right).\]

\end{rmk}

\subsubsection{Serre--Tate theory in the $\mu$-ordinary case}\label{ST-mu-LT}
In \cite[Sections 2.2 and 2.3]{moonen}, Moonen describes the $\mu$-ordinary local  EL moduli (i.e., the associated unpolarized deformation problem) as a cascade of Barsotti--Tate groups over $\W$. 
More precisely,  given an orbit $\co\in\mathfrak{O}$, for each pair of distinct slopes $a, b$, $ a> b$, of the $\mu$-ordinary Newton polygon  $\nu_\co$,  together with their multiplicities, Moonen defines a Barsotti--Tate group $\bG_{a,b}/\W$, and  proves that the local EL moduli corresponding to $\co$ has a natural structure of a cascade of biextensions of the groups $\bG_{a,b}/\W$, for all $a,b$.

\begin{defn} (\cite[Section 2.3.2]{moonen})
For an orbit $\co\in\mathfrak{O}$, and two distinct slopes $a=a_i>b=a_j$  of $\nu_\co$, $0\leq j<i\leq s_\co$,
the Barsotti--Tate group $\bG_{a,b}$ over $\W$  is defined as $$\bG_{a,b}=\bG_{a_i,a_j}:=\X^{\rm can}(\co, 1,\cf'_{i,j})^{m_im_j},$$
where $\X^{\rm can}(\co, 1,\cf'_{i,j})$ is the canonical lifting (in the sense of  Proposition \ref{canonicallift}) of the $\mu$-ordinary $\CO_{F,u_\co}$-module $\X(\co,1,\cf'_{i,j})$, and $m_i,m_j$ denote respectively the multiplicities of $a_i,a_j$ (with the notation of the proof of Proposition \ref{localsplit}, $m_l=F_l-F_{l+1}$, for all $l=0, \dots, s_\co$). 

The \BT group $\X(\co,1,\cf'_{i,j})$ is an isoclinic $\CO_{F,u_\co}$-module of dimension $a_i-a_j$  and height $ e $. 
(For the definition of the multiplicative type $\cf'_{i,j}:\co\rightarrow \{0,1\}$ see \cite[Section 2.3.2]{moonen}. Also, in {\it loc. cit.} the group $\bG_{a,b}=\bG_{a_i,a_j}$ is denoted by $G^{(j,i)}$ and the multiplicities $m_i$ by $ d^i$.)
\end{defn}

In the inert case for $s_\co=1$, i.e., in the case of one orbit $\co$ and two distinct slopes $a,b$, $a>b$, Moonen's result  (\cite[Theorem 2.3.3]{moonen}) states the local EL moduli is isomorphic (as a group) to the Barsotti--Tate $\CO_{F,u_\co}$-module $\bG_{a,b}$, where the natural group structure of the local EL moduli is defined by its identification with the space of extensions of $(\X^2)^\can$ by $(\X^1)^\can$ (the identity of the group corresponding to the canonical split lifting $\X^\can=(\X^1)^\can\oplus (\X^2)^\can$). In the general case, the existence of a cascade structure is defined by induction on the number of slopes, and separately for each orbit.
In particular, the cascade structure of the local EL moduli implies that, for each orbit $\co$ and pair $i,j$, $0\leq j<i\leq s_\co$, the subspace of the local EL moduli, corresponding to partially split extensions of the type 
$X(\co) \oplus\left(\oplus_{l\neq i,j}(\X(\co)^l)^\can\right)$, where $X(\co)$ is an extension of the $\CO_{F,u_\co}$-module  $(\X(\co)^j)^\can$ by $(\X(\co)^i)^\can$, is isomorphic to the Barsotti--Tate group $\bG_{a_i,a_j}$, where $a_i,a_j$ denote respectively the $i$-th and $j$-th slopes of $\nu_\co$.

In general, the local PEL moduli (which can be realized as a subspace of the local EL moduli) does not  inherit a cascade structure \cite[Section 3.3.2]{moonen}. To be more precise, let us distinguish the cases of  $\co\neq \co^*$ and $\co=\co^*$.   

If $\co\neq\co^*$, then the local PEL moduli associated with the pair $(\co,\co^*)$ is canoncally isomorphic to the local EL moduli associated with $\co$, and thus also has a natural cascade structure.

If $\co=\co^*$, then the local PEL moduli does not have a cascade structure in general, although the following weaker statements hold.
For each pair $i,j$, $0\leq j<i\leq s_\co/2$, the subspace of the local PEL moduli corresponding to partially split self-dual extensions of the type 
$X(\co)\oplus X(\co)^* \oplus\left(\oplus_{l\neq i,j,s-i,s-j}(\X(\co)^l)^\can\right)$, where $X(\co)$ is an extension of the $\CO_{F,u_\co}$-modules  $(\X(\co)^j)^\can$ by $(\X(\co)^i)^\can$, is isomorphic to the Barsotti--Tate group $\bG_{a_i,a_j}$, where $a_i,a_j$ denote respectively the $i$-th and $j$-th slopes of $\nu_\co$. 

More subtly,  for each $j$, $0\leq j< s_\co/2$, the subspace corresponding to partially split symmetric extensions of the type 
$X(\co)\oplus\left(\oplus_{l\neq j,s-j}(\X(\co)^l)^\can\right)$, where $X(\co)$ is a self-dual extension of the $\CO_{F,u_\co}$-modules  $(\X(\co)^j)^\can$ by $(\X(\co)^{s-j})^\can$, is isomorphic to a sub-$\CO_{F_0,v_\co}$-module  $\bG'_{a_{s-j},a_j}$ of  the $\CO_{F,u_\co}$-module $\bG_{a_{s-j},a_j}$, where $a_j$ denotes the $j$-th slope of $\nu_\co$ (\cite[Section 3.3.2]{moonen}). 

\begin{rmk}
As $\bG'_{a_{s-j},a_j}$ is a Barsotti--Tate subgroup of $\bG_{a_{s-j},a_j}$, it is also isoclinic of the same slope as Barsotti--Tate groups.  (See Remark \ref{BTslope}.)  Thus, as an $\CO_{F_0,v_\co}$-module, it has slope $a_{s-j}-a_j/2$. (Recall $a_{s-j}=e-a_j$, thus $a_{s-j}-a_j/2=e/2-a_j$.)
\end{rmk}

\begin{remark}
In classical Serre--Tate theory, i.e., for $\X$ ordinary and $g$-dimensional, the local EL moduli space parametrizes extensions of $(\Q_p/\Z_p)^g$
by $\mu_{p^\infty}^g$, and is isomorphic to $\hat{\mathbb G}^{g^2}_m$, while  the local PEL moduli space, which corresponds to the subspace of self-dual extensions, is isomorphic to $\hat{\mathbb G}^{g(g+1)/2}_m$.
\end{remark}

Abusing notation, in the following, we simply write $\bG_{a,b}/\W$ in place of $\bG'_{a,b}/\W$ when appropriate.

\subsubsection{The Kodaira--Spencer isomorphism}
Fix a point $x\in \Ig(\F)$, we write $\cR=\CO_{\Ig,x}^\wedge$. The canonical ($\CO_F\otimes \cR$-linear) splittings $\omega_{x}\cong \uo_{x}$ and $U_x=\U_{x}$ induce an isomorphism $\omega^2_x\cong \uo^2_x$.
We denote by $\ks_{x}$ the composition with the localization at $x$ of (the pullback over $\Ig$ of) the Kodaira--Spencer isomomorphism 
$\ks:\omega^2\to \Omega^1_{\Ig/\W}$ with the canonical splitting $\omega^2_x\cong \uo^2_x$, i.e.
$$\ks_{x}: \uo_{x}^2 
\cong \omega^2_x\to \Omega^1_{\cR/\W},$$
and still refer to it as the (split) localization at $x$ of the Kodaira--Spencer isomomorphism.

We deduce the following result from \cite[Theomre 2.3.3]{moonen}.

\begin{prop} With the notations of remark \ref{KS-sub}. Let $x$ be a point of $\Ig$, and $\co\in\mathfrak{O}_0$.
\begin{enumerate}
\item
Assume $\co\neq \co^*$. Then for  each pair of integers $i,j$, $0\leq j<i\leq s_\co$, the Kodaira--Spencer isomorphism $\ks$ induces local isomorphisms
$$\ks_{x,\co}^{ i,j}: \gr_\co^{i,j}(\omega\otimes U^\vee)_x \to \Omega^1_{\bG_{a_i,a_j}/\W}\otimes_{\CO_{\bG_{a_i,a_j}}}\CO_{\Ig,x}.$$
\item 
Assume $\co=\co^*$.Then for  each pair of integers $i,j$, $0\leq j<i\leq s_\co/2$, the Kodaira--Spencer isomorphism $\ks$ induces local isomorphisms
$$\ks_{x,\co}^{ i,j}: \gr_\co^{i,j}(\omega\otimes U^\vee)_x
\to \Omega^1_{\bG_{a_i,a_j}/\W}\otimes_{\CO_{\bG_{a_i,a_j}}}\CO_{\Ig,x}.$$

\item 
Assume $\co=\co^*$.
Then for  each integer $j$, $0\leq j< s_\co/2$, the Kodaira--Spencer isomorphism $\ks$ induces local isomorphisms
$$\ks_{x,\co}^{ s-j,j}: \gr_\co^{s-j,j}(\omega\otimes U^\vee)_x^{\lambda-sym} 
\to \Omega^1_{\bG_{a_{s-j},a_j}/\W}\otimes_{\CO_{\bG_{a_{s-j},a_j}}}\CO_{\Ig,x}.$$
\end{enumerate}
\end{prop}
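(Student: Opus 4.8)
The plan is to deduce the three stated local Kodaira--Spencer isomorphisms from Moonen's description of the $\mu$-ordinary local moduli as a cascade of Barsotti--Tate groups \cite[Theorem 2.3.3]{moonen}, exactly as one deduces the classical Serre--Tate parametrization of the Kodaira--Spencer morphism over the ordinary locus. First I would record the general principle: if a deformation space $\mathrm{Def}$ of an abelian variety (with PEL structure) is a (relatively) formally smooth formal group, then the Kodaira--Spencer morphism identifies $\Omega^1_{\mathrm{Def}/\W}$ with the appropriate $\lambda$-symmetric tensor of Hodge bundles, compatibly with direct sum decompositions of the moduli into sub-deformation-problems. Over the $\mu$-ordinary Igusa tower, the completed local ring $\cR=\CO^\wedge_{\Ig,x}$ is identified (via the canonical lift of the slope filtration, Proposition \ref{slopefiltration}, together with Moonen's parameters $\uu$ from Section \ref{ST-mu-LT}) with the completed local ring of the local (P)EL moduli space, and the canonical splittings $\omega_x\cong\uo_x$, $U_x\cong\U_x$ of Propositions \ref{localsplit} and \ref{unitroot} translate the decompositions $\uo=\oplus_\tau\oplus_t\gr^t(\omega_\tau)$ and $\U=\oplus_\tau\oplus_t\gr^t(U)_\tau$ into the cascade decomposition of the moduli.

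Next I would carry out the bookkeeping separately in the three cases. In case (1), $\co\neq\co^*$: Moonen shows the local EL moduli for the pair $(\co,\co^*)$ is canonically isomorphic to the local EL moduli for $\co$, which has a cascade structure with the ``pure'' pieces indexed by pairs $i>j$ being the Barsotti--Tate groups $\bG_{a_i,a_j}$. Restricting the Kodaira--Spencer isomorphism $\ks:\omega^2\to\Omega^1_{\Ig/\W}$ to the direct summand $\gr_\co^{i,j}(\omega\otimes U^\vee)_x$ of $\uo^2_x$ (a direct summand by Remark \ref{KS-sub}), and matching it with the $\bG_{a_i,a_j}$-factor of the cascade, gives the asserted $\ks_{x,\co}^{i,j}$; the fact that it is an isomorphism (and not merely a map) follows because $\ks$ is an isomorphism, the source is a direct summand, and the target $\Omega^1_{\bG_{a_i,a_j}/\W}\otimes\CO_{\Ig,x}$ has the right rank $(F_i-F_{i+1})(F_j-F_{j+1})(a_i-a_j)$ by Proposition \ref{GRVANISH} together with the height/dimension count for $\X(\co,1,\cf'_{i,j})$ recorded in Section \ref{ST-mu-LT}. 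In case (2), $\co=\co^*$ and $0\le j<i\le s_\co/2$: here the relevant sub-deformation-problem (partially split self-dual extensions, Section \ref{ST-mu-LT}) is again isomorphic to $\bG_{a_i,a_j}$, and $\gr_\co^{i,j}(\omega\otimes U^\vee)_x$ is a direct summand of $\uo^2_x$ by Remark \ref{KS-sub}, so the identical argument applies. In case (3), $\co=\co^*$ and $0\le j<s_\co/2$: the relevant sub-deformation-problem (partially split \emph{symmetric} self-dual extensions) is the $\CO_{F_0,v_\co}$-subgroup $\bG'_{a_{s-j},a_j}\subset\bG_{a_{s-j},a_j}$, matched with the $\lambda$-symmetric summand $\gr_\co^{s-j,j}(\omega\otimes U^\vee)_x^{\lambda-sym}$ of $\uo^2_x$; again $\ks$ restricts to an isomorphism onto $\Omega^1_{\bG'_{a_{s-j},a_j}/\W}\otimes\CO_{\Ig,x}$, with the rank match provided by the slope $e/2-a_j$ computation in the Remark following the definition of $\bG'_{a,b}$.

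The main obstacle I expect is not the case-by-case rank bookkeeping but making precise the compatibility between the Kodaira--Spencer morphism (defined via the Gauss--Manin connection, as in Section \ref{KS-class}) and the cascade group structure on Moonen's local moduli space. Concretely: one must check that, under the identification $\cR\cong\CO^\wedge$ of the local EL/PEL moduli and the canonical splittings of $\omega$ and $U$ provided by Propositions \ref{localsplit}, \ref{unitroot}, \ref{horizontalbasis}, the restriction of $\ks$ to each cascade factor is exactly the ``classical'' Kodaira--Spencer map for the Barsotti--Tate group $\bG_{a_i,a_j}$ (i.e., the canonical identification of the cotangent space of a formally smooth deformation space with the relevant $\mathrm{Hom}$ of tangent/cotangent modules). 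This is essentially the content of combining \cite[Theorem 2.3.3]{moonen} with Proposition \ref{horizontalbasis} (which shows $\nabla$ sends the Igusa-horizontal lift of $\omega_\X$ into $\U\otimes\Omega^1$, so that the induced map $\uo\to\U\otimes\Omega^1$ is exactly the Gauss--Manin-theoretic avatar of the universal extension class); once that identification is in place, the three isomorphisms are immediate from matching direct summands. I would isolate this compatibility as the heart of the proof and handle the three rank counts as routine consequences of Proposition \ref{GRVANISH} and the dimension/height data from Section \ref{ST-mu-LT}.
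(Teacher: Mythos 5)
Your proposal is correct and follows exactly the route the paper takes: the paper gives this proposition with no displayed proof, only the preceding remark "We deduce the following result from \cite[Theorem 2.3.3]{moonen}," and your argument spells out precisely the details that statement leaves implicit — matching the canonical splittings of Propositions \ref{localsplit}, \ref{unitroot}, \ref{horizontalbasis} with Moonen's cascade decomposition and checking ranks via Proposition \ref{GRVANISH}. You have also correctly isolated the compatibility of the Gauss--Manin/Kodaira--Spencer construction with the cascade group structure as the genuine content of the deduction.
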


As in \cite[Theorem 4.4.1]{KatzST}, the compatibility between the Gauss--Manin connection and the Frobenius map,
i.e., the equality $\nabla\circ F=(f\otimes {\rm id})\circ F$, 
implies  the result below.
In the following,   $\omega_{\bG_{a_i,a_j}/\W}$ denotes the space of invariant differentials of $\bG_{a,b}/\W$. 

\begin{prop} Maintain the above notation. Fix $x\in\Ig(\F)$.
Let $\tau\in\T_0$, and $i,j\in\Z$.  Assume $0\leq j<i\leq s_\tau$ if $\co_\tau\neq \co_\tau^*$, and assume either $0\leq j<i\leq s_\tau/2$ or $0\leq j\leq s_\tau/2$ and $i=s-j$ if $\co_\tau=\co_\tau^*$.
For any $l\in \gr^i(\omega_\X)_\tau\otimes_\W \gr^j(\omega_{\X^\vee})_\tau$, 
\[\ks_x(\alpha\otimes\alpha^\vee(l))\in\omega_{\bG_{a_i,a_j}/\W}.\]
\end{prop}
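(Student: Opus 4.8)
The plan is to reduce the statement to a purely local computation in Moonen's cascade picture, exactly as in the proof of Proposition~\ref{horizontalbasis}. First I would localize: since the sheaves involved are locally free, it suffices to check the containment after completing at a point $x_0$ of $\cS_1$ lying below $x$, so we may work over $\cR = \CO^\wedge_{\Ig,x} \cong \W[\![\uu]\!]$ and use the explicit basis $\{\alpha^\tau_j\}$ of $M = M_\co$ from \cite[Section 1.2.3]{moonen}, together with Moonen's matrix $D^\tau = (D^\tau_{r,s})$ of one-forms for the Gauss--Manin connection from \cite[Proposition 2.1.9]{moonen}. Recall from the proof of Proposition~\ref{horizontalbasis} that $\{\alpha^\tau_j \mid j > n - \cf(\tau)\}$ is a $\W$-basis of $\omega_\tau$ (pulled back via $\alpha$ from $\omega_\X$), and the complementary vectors $\{\alpha^\tau_j \mid j \le n - \cf(\tau)\}$ give a basis of $U_\tau$; moreover the slope filtration is read off by comparing $j$ to the $F_i$'s. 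The element $l \in \gr^i(\omega_\X)_\tau \otimes_\W \gr^j(\omega_{\X^\vee})_\tau$ corresponds (after applying $\alpha \otimes \alpha^\vee$ and the splitting $\omega_x^2 \cong \uo_x^2$) to the image in $\gr^{i,j}_\co(\omega \otimes U^\vee)$ of a sum of terms $\alpha^\tau_r \otimes (\alpha^\tau_s)^\vee$ with $n - F_{i+1} < r \le n - F_i$ reindexed appropriately (dually for $s$).

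Next I would unwind the definition of $\ks_x$ on such an element. By the construction of the Kodaira--Spencer morphism in Section~\ref{KS-class}, $\ks$ is obtained from $\nabla$ composed with projection $p_{A^\vee/S}: \hdr^1 \to \omega^\vee_{A^\vee/S}$ and the canonical pairing; concretely, on the split localization at $x$, $\ks_x(\alpha^\tau_r \otimes (\alpha^\tau_s)^\vee)$ is the coefficient extracting the $U_\tau$-component of $\nabla(\alpha^\tau_r)$ paired against $(\alpha^\tau_s)^\vee$ — that is, up to sign it equals the matrix entry $D^\tau_{s,r}$ (with $s$ in the range $1 \le s \le n - F_j$ indexing the $j$-th graded piece of $U$). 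Moonen's \cite[Theorem 2.3.3]{moonen} identifies the corresponding block of $\Omega^1_{\cR/\W}$ with $\Omega^1_{\bG_{a_i,a_j}/\W} \otimes_{\CO_{\bG_{a_i,a_j}}} \cR$ (this is exactly the content of the preceding Proposition giving the isomorphisms $\ks^{i,j}_{x,\co}$), where $\Omega^1_{\bG_{a_i,a_j}/\W}$ is a free $\CO_{\bG_{a_i,a_j}}$-module, and $\omega_{\bG_{a_i,a_j}/\W} = \Omega^1_{\bG_{a_i,a_j}/\W}\big|_{e}$ is its fiber at the identity section, i.e. the submodule of invariant differentials. So the claim becomes: the $D^\tau_{s,r}$ appearing as $\ks_x(\alpha \otimes \alpha^\vee (l))$ lie in the invariant-differential submodule.

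To see this I would invoke the compatibility $\nabla \circ F = (f \otimes \id)\circ F$ between the Gauss--Manin connection and Frobenius, exactly as in \cite[Theorem 4.4.1]{KatzST} and as used at the end of the proof of Proposition~\ref{horizontalbasis}. The key functional equation there (with $\phi$ the Frobenius lift on $\cR$, $\phi(u^\tau_{r,s}) = (u^\tau_{r,s})^p$) reads, for $r > n - \cf(\tau)$,
\[
D^\tau_{s,r} = d\phi(D^{\tau^{\sigma^{-1}}}_{s,r}) + \sum_{l > n - \cf(\tau)} u^\tau_{s,l}\, d\phi(D^{\tau^{\sigma^{-1}}}_{l,r}).
\]
Because $D^\tau_{s,r}$ for $r$ in the $\omega$-range and $s$ in the relevant $U$-range (i.e. the entries cut out by the grading indices $i,j$) lands in the block identified with $\Omega^1_{\bG_{a_i,a_j}/\W}\otimes\cR$, and the leading term $d\phi(D^{\tau^{\sigma^{-1}}}_{s,r})$ has all coefficients divisible by $p$ after applying $\phi$ (since $d\phi$ of any $du^\tau_{r,s}$ is $p (u^\tau_{r,s})^{p-1}\, du^\tau_{r,s}$), iterating the relation $\mathbf{e}$ times around the $\sigma$-orbit shows that the non-invariant part of $D^\tau_{s,r}$ — i.e. its reduction modulo the ideal defining the identity section, which is a non-unit — is fixed by an operator that is topologically nilpotent on the maximal ideal, hence is zero; what survives is exactly the constant (invariant) part. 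Concretely, writing $D^\tau_{s,r} = c + (\text{higher order})$ with $c \in \omega_{\bG_{a_i,a_j}/\W}$ the invariant part, the functional equation forces the higher-order part to satisfy a fixed-point equation modulo $p^m$ for all $m$, hence to vanish, so $D^\tau_{s,r} \in \omega_{\bG_{a_i,a_j}/\W}$. Assembling over $\tau \in \co$ (and, when $\co = \co^*$, using the $\lambda$-symmetric identification from Remark~\ref{KS-sub} and its compatibility with $\bG'_{a_{s-j},a_j} \subset \bG_{a_{s-j},a_j}$) yields the statement for general $l$.

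The main obstacle I anticipate is bookkeeping rather than conceptual: carefully matching the index ranges of the matrix entries $D^\tau_{s,r}$ to the graded pieces $\gr^i(\omega_\X)_\tau$ and $\gr^j(\omega_{\X^\vee})_\tau$ under the identifications $\gr^{i,j}_\co(\omega \otimes U^\vee) \cong \Omega^1_{\bG_{a_i,a_j}}\otimes\cR$ coming from Moonen's cascade, and making sure the invariant-differential submodule on the $\bG_{a_i,a_j}$ side is precisely the one pinned down by the Frobenius functional equation (i.e. that ``constant coefficients in the $\uu$-expansion'' really corresponds to ``invariant differential on $\bG_{a_i,a_j}$'' and not merely to some $\cR^\times$-multiple thereof). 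The $\co = \co^*$, $i = s-j$ case needs the extra input that the sub-$\CO_{F_0,v_\co}$-module $\bG'_{a_{s-j},a_j} \subset \bG_{a_{s-j},a_j}$ inherits invariant differentials compatibly, which follows from \cite[Section 3.3.2]{moonen} since the inclusion is one of Barsotti--Tate groups; this is the one place where the argument is genuinely more delicate than in the ordinary (or split) setting.
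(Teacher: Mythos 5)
Your broad strategy is right and matches the paper in spirit: reduce to a local computation at $x$, use Moonen's explicit basis $\{\alpha^\tau_j\}$ of $M_\co$, and invoke the Frobenius compatibility $\nabla\circ F = (f\otimes\id)\circ F$. However, the mechanism you use to close the argument has a genuine gap, and it is exactly the one you flag at the end as a ``bookkeeping'' worry — but it is not bookkeeping, it is a conceptual problem.

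You argue that the functional equation forces $D^\tau_{s,r}$ to equal its constant term, and then identify ``constant in the $\uu$-expansion'' with ``invariant differential of $\bG_{a_i,a_j}$.'' These are not the same thing. As the paper itself records in Remarks \ref{MULT} and \ref{LT}, the invariant differential of $\X^{\rm can}(\co,1,\cf'_{0,e})$ is $dq/q = (1+u)^{-1}du$, and for a Lubin--Tate factor it is $G_x(0,u)^{-1}du$; neither of these is a constant multiple of $du$. So ``surviving constant part'' does not characterize $\omega_{\bG_{a_i,a_j}/\W}$, and the fixed-point argument modeled on the proof of Proposition \ref{GMST} (which was proving vanishing, a stronger conclusion) does not directly produce membership in the invariant-differential lattice. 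Your final paragraph already senses this, asking whether constant coefficients ``really correspond'' to invariant differentials ``and not merely to some $\cR^\times$-multiple thereof''; that doubt is well-founded and the argument as written does not resolve it.

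The paper's proof sidesteps this entirely by using a different characterization: both the $\W$-lattice $(\alpha\otimes\alpha^\vee)\bigl(\gr^i(\omega_\X)_\tau\otimes_\W\gr^j(\omega_{\X^\vee})_\tau\bigr)$ inside $\gr_\tau^{i,j}(\uo^2)$ and the lattice $\omega_{\bG_{a_i,a_j}/\W}$ inside $\Omega^1_{\bG_{a_i,a_j}/\W}\otimes_{\CO_{\bG_{a_i,a_j}}}\CO_\Ig$ are characterized by admitting a $\W$-basis on which $F^e$ acts as $p^{a_i-a_j}$. The first has such a basis because Moonen's basis $\{\alpha^\tau_j\}$ is an $F^e$-eigenbasis with eigenvalues $p^{a(j)}$; the second because $\omega_{\bG_{a,b}/\W}$ is by construction the module of invariant differentials of an isoclinic group of the appropriate slope. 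Since $\nabla\circ F^e = (f\otimes\id)\circ F^e$ makes $\ks_x$ an $F^e$-equivariant map, it must carry one lattice into the other. This is cleaner and avoids any reference to constant terms. If you want to salvage your more explicit route, you would need to replace ``constant part'' throughout with ``$F^e$-eigenvector of eigenvalue $p^{a_i-a_j}$,'' at which point the argument collapses to the one in the paper.
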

\begin{proof}

Note that the image under $\alpha\otimes \alpha^\vee$ of the space $\gr^i(\omega_\X)_\tau\otimes_\W \gr^j(\omega_{\X^\vee})_\tau$ is
a $\W$-lattice in $\gr_\tau^{i,j}(\uo^2)$.  That is, \[\gr_\tau^{i,j}(\uo^2)=(\alpha\otimes \alpha^\vee)\left(\gr^i(\omega_\X)_\tau\otimes \gr^j(\omega_{\X^\vee})_\tau)\right)\otimes_\W\CO_\Ig.\]   

Similarly, the space $\omega_{\bG_{a_i,a_j}/\W}$ of invariant differentials of $\bG_{a,b}/\W$ is a $\W$-lattice in $\Omega^1_{\bG_{a,b}/\W}\otimes_{\CO_{\bG_{a,b}}}\CO_\Ig$.  That is, 
\[\Omega^1_{\bG_{a,b}/\W}\otimes_{\CO_{\bG_{a,b}}}\CO_\Ig= \omega_{\bG_{a_i,a_j}/\W}\otimes_\W\CO_\Ig.\] 

Both $\W$-lattices are characterized by the property that they admit a basis over $\W$ on which the $e$-th iterate of Frobenius $F^e$ acts as $p^{a_i-a_j}$. (For $(\alpha\otimes\alpha^\vee)\gr^i(\omega_\X)\otimes_\W \gr^j(\omega^\vee_\X)$, this basis arises from the basis of $\omega_\X$ defined in \cite[Section1.2.3]{moonen} and introduced in the proof of Propostion \ref{GMST}.)
Thus, the statement follows from the equality $\nabla\circ F^e=(f\otimes {\rm id})\circ F^e$.
\end{proof}

 \begin{rmk}\label{MULT}
 Fix an orbit $\co$, let $\cf$ be the associated  multiplicative type. 
Note that $0$ is a slope of $\X(\co,n,\cf)$  if and only if $\cf(\tau)\neq n$  for all $\tau\in\co$.  (e.g.,  $\X(\co,n,\cf)$  is \'etale if $\cf(\tau)=0$, for all $\tau\in\co$.)
Similarly, $e$ is a slope of $\X(\co,n,\cf)$ if and only if  $\cf(\tau)\neq 0$, for all $\tau\in\co$. (e.g.,  $\X(\co,n,\cf)$  is multiplicative if $\cf(\tau)=n$, for all $\tau\in\co$.)

Assume both $0$ and $e$ are slopes of $\X(\co,n,\cf)$ (i.e., for all $\tau\in\co$, $\cf(\tau)\neq 0,n$).  Then $a_1=0$ and $a_s=e$, and the Barsotti--Tate group $\bG_{0,e}$ occurs in the cascade of the local EL moduli, $\bG_{0,e}= \X^{\rm can}(\co, 1,\cf'_{0,e})^{d^1d^s}$. By definition, $\X^{\rm can}(\co, 1,\cf'_{0,e})$ is isomorphic to a sum of $e$-copies of the formal multiplicative group $\hat{\bG}_m$. In fact,  the given condition is both sufficient and necessary for the formal multiplicative groups to occur in the cascade. 
 
For  $u_1, \dots, u_e$ a choice of parameters of $\X^{\rm can}(\co, 1,\cf'_{0,e})/\W$ (i.e.,  $\CO_{\X^{\rm can}(\co, 1,\cf'_{0,e})}=\W[\![u_1,\dots u_e]\!]$), the space of invariant differentials of $\X^{\rm can}(\co, 1,\cf'_{0,e})$ is generated by 
\[\eta_i:=dlog(q_i)=\frac{1}{q_i}dq_i\in\omega_{\X^{\rm can}(\co, 1,\cf'_{0,e})/\W}\subset \Omega^1_{{\X^{\rm can}(\co, 1,\cf'_{0,e})}}=\langle du_1, \dots ,du_e\rangle_{\W[\![u_1,\dots, u_e]\!]},\] for $q_i =1+ u_i$, $i=1, \dots, e$.

 \end{rmk}

 \begin{rmk}\label{LT}
Fix an orbit $\co$, let $\cf$ be the associated  multiplicative type. Note that  there exists an integer 
$a\in\{0,\dots, e\}$ such that both $a,a+1$ are slopes of $\X(\co,n,\cf)$ if and only if there exists $\tau_0\in\co$ such that $\cf(\tau_0)\neq \cf(\tau)$ for all $\tau\neq \tau_0$.

Assume both $a$ and $a+1$ are slopes of $\X(\co,n,\cf)$, for some integer $a$, $0\leq a\leq e$. Them, the Barsotti--Tate group $\bG_{a,a+1}$ occurs in the cascade of the local moduli. By definition, $\X^{\rm can}(\co, 1,\cf'_{a,a+1})$ is a formal Lubin--Tate $\CO_{F,u_\co}$-modules of slope $1/e$. In fact, the given condition is both sufficient and necessary for a formal Lubin--Tate  $\CO_{F,u_\co}$-module to occur in the cascade. 

For $u$ a choice of a parameter of $\X^{\rm can}(\co, 1,\cf'_{a,a+1})/\W$ (i.e., $\CO_{\X^{\rm can}(\co, 1,\cf'_{a,a+1})}=\W[\![u]\!]$),  the space of invariant differentials of $\X^{\rm can}(\co, 1,\cf'_{a,a+1})$ is generated by \[\eta:=dlog_{G}(u):=G_x(0,u)^{-1}du \in\Omega^1_{\X^{\rm can}(\co, 1,\cf'_{a,a+1})/\W}=\langle du\rangle_{\W[\![u]\!]}\] where $G(x,y)$ denotes the formal group law of $\X^{\rm can}(\co, 1,\cf'_{a,a+1})$ with respect to the choosen parameter $u$, and 
$G_x$ denotes the partial derivative of $G$ with respect to the variable $x$ (\cite[p.\ 4]{JaredWeinsteinLT}).
 \end{rmk}

\section{Differential operators in the $\mu$-ordinary setting}\label{do-mo}
This section introduces differential operators that enable construction of new $p$-adic automorphic forms and families.  Unlike in the ordinary setting in \cite{kaCM, EDiffOps, EFMV}, we now need to keep track of slope filtrations and rely on new results about the canonical complement to $\omega$ introduced in Section \ref{ur-section}.

\subsection{Definition of $p$-adic differential operators}\label{diffop}
As our construction of $p$-adic differential operators begins similarly in the ordinary setting \cite{kaCM, EDiffOps, E09, EFMV}, we focus here primarily on the details unique to the $\mu$-ordinary case, namely the roles of the filtration and semi-simplification, which pose additional challenges.  This is one of the most challenging parts of this paper.

Following the conventions of \cite[Section 3.3]{EFMV}, for all positive integers $d$ and $e$ and any irreducible representation $\rho:=\rho_\kappa$ of highest weight $\kappa$, we define morphisms of sheaves
\begin{align*}
\nabla_{\otimes d}^e:&\hdrone\left(A/S\right)^{\otimes d}\rightarrow\hdrone\left(A/S\right)^{\otimes d}\otimes\left(\bigoplus_{\tau\in\T_0}\left(\hdr\left(A/S\right)_\tau\otimes\hdr\left(A/S\right)_{\tau^*}\right)\right)^{\otimes e}\\
\nabla_\rho^e:=\nabla^e_\kappa:& \schur_\kappa\left(\hdrone\left(A/S\right)\right)\rightarrow\schur_\kappa\left(\hdrone\left(A/S\right)\right)\otimes
\left(\bigoplus_{\tau\in\T_0}\left(\hdr\left(A/S\right)_\tau\otimes\hdr\left(A/S\right)_{\tau^*}\right)\right)^{\otimes e},
\end{align*}
where
\begin{align}\label{subscript-equation}
\nabla_{\otimes d}^e:=\nabla_{\otimes d+2(e-1)}\circ \cdots \circ\nabla_{\otimes d},
\end{align}
$\nabla_{\otimes d}$ denotes the Gauss--Manin connection extended to $\hdrone\left(A/S\right)^{\otimes d}$ via the product rule (Leibniz's rule), and $\nabla_{\kappa}^e$ is the morphism induced by $\nabla_{\otimes d_\kappa}^e$.  We also define
\begin{align*}
\nabla_\kappa:=\nabla_\rho&:=\nabla_\rho^1
\end{align*}
Observe that $\nabla_\rho=\nabla_\kappa$ decomposes as a direct sum of morphisms
\begin{align*}
\nabla_\rho(\tau):\left(\hdrone\left(A/S\right)\right)^\rho\rightarrow\left(\hdrone\left(A/S\right)\right)^\rho\otimes
\left(\hdr\left(A/S\right)_\tau\otimes\hdr\left(A/S\right)_{\tau^*}\right).
\end{align*}

For each positive integer $e$, we define $\nabla_\kappa^e\left(\tau\right):=\nabla_\rho^e\left(\tau\right)$ to be the composition of 
$\nabla_\kappa\left(\tau\right):=\nabla_\rho\left(\tau\right)$ 
with itself $e$ times (with the subscript increasing as in Equation \eqref{subscript-equation}).

We also denote by $\Auniv$ the pullback of the universal abelian scheme $\Auniv/\Smu$ over $\Igmu$.  For each irreducible representation $\rho$, the splitting
\begin{align*}
\hdrone\left(\Auniv/\Igmu\right) = \omega_{\Auniv/\Igmu}\oplus U
\end{align*}
induces a projection
\begin{align*}
\varpi\Aoverig: \left(\hdrone\left(\Auniv/\Igmu\right)\right)^\rho\twoheadrightarrow\left(\omega_{\Auniv/\Igmu}\right)^\rho
\end{align*}
(projection modulo $U$).

We define
\begin{align*}
D_\rho^e\Aoverig:& \left(\omega\Aoverig\right)^\rho\rightarrow\left(\omega\Aoverig\right)^\rho\otimes
\left(\oplus_{\tau\in\T_0}\left(\omega_\tau\Aoverig\otimes\omega_{\tau^*}\Aoverig\right)\right)^e
\end{align*}
by
\begin{align*}
D_\rho\Aoverig & = \varpi\Aoverig\circ\nabla_\rho^e.
\end{align*}
We define
\begin{align*}
D_\rho\Aoverig :=D_\rho^1\Aoverig.
\end{align*}

When it is clear from context that we are working with $\Auniv/\Igmu$, we simply write $D_\rho^e$, $D_\rho$, etc.  For the other operators  introduced below, we follow similar conventions with regard to inclusion of $\Auniv/\Igmu$ in the notation.
Since $\nabla(\unitrootold)\subseteq \unitrootold\otimes\Omega_{\Auniv/\Igmu}$, we have that 
\begin{align*}
D_\rho^e = D_{\rho\otimes\left(\st\otimes\st\right)^{e-1}}\circ\cdots \circ D_{\rho\otimes\left(\st\otimes\st\right)}\circ D_\rho.
\end{align*}

For any irreducible representation $\CZ$ that is sum-symmetric of depth $e$, consider the projection
\begin{align*}
\pi_{\CZ}: \left(\bigoplus_{\tau\in\T_0}\left(\omega_\tau\Aoverig\otimes\omega_{\tau^*}\Aoverig\right)\right)^e\twoheadrightarrow \left(\omega\Aoverig\right)^{\CZ}.
\end{align*}
Also, for $\kappa$ a positive dominant weight, $\rho_\kappa$ an irreducible representation of highest weight $\kappa$, $\kappa'$ sum-symmetric of depth $e$, and $\CZ:=\rho_{\kappa'}$, consider the projection
\begin{align*}
\pi_{\kappa, \kappa'}: \CE_{\rho\otimes\CZ}\rightarrow\CE_{\rho_{\kappa\cdot\kappa'}}
\end{align*}
induced by the canonical projection (described in more detail in \cite[Lemma 2.4.6]{EFMV})
\begin{align*}
\rho_\kappa\otimes\rho_{\kappa'}\twoheadrightarrow\rho_{\kappa\cdot\kappa'}.
\end{align*}
We define
\begin{align*}
D_\rho^{\CZ}\Aoverig &:=\left(\id\otimes\pi_{\CZ}\right)\circ D_\rho^e\\
D_\kappa^{\kappa'}\Aoverig &:=\pi_{\kappa, \kappa'}\circ D_\rho^{\CZ}
\end{align*}

\begin{rmk}
The $p$-adic operators $D_\rho^\CZ$ are the $p$-adic incarnation of the Maass--Shimura $\ci$ differential operators $D_{\rho, \ci}^\CZ$ that arise over $\IC$ and are described in detail in, for example, \cite[Section 23]{sh}, \cite{shclassical}, and \cite[Section 12]{Shimura}.  The construction of these $\ci$ differential operators is similar, except that the complement to $\omega$ is replaced by the Hodge de Rham splitting.  The Hodge theoretic construction was first completed by M.\ Harris for Siegel modular forms in \cite{hasv} (which we recommend to readers trying to get acquainted with the ideas of this construction) and for more general Shimura varieties in \cite{ha86}.  We also define analogous $\ci$ differential operators $D_{\kappa, \ci}^{\kappa'}$ similarly.
\end{rmk}

\subsection{$p$-adic differential operators on $p$-adic forms in the $\mu$-ordinary setting}

We consider the sequence of sheaves over $\cS$
$$0\rightarrow\omega \subseteq {\hdr^1} \rightarrow \omega^\vee\rightarrow 0,$$
together with its canonical splitting ${\hdr^1}\supseteq U\cong \omega^\vee$, constructed in Proposition \ref{unitroot}.
We denote by
$\varpi:{\hdr^1}\to{\omega}$ the projection modulo $U$, and write
$$D:=(\varpi\otimes{\rm id})\circ \nabla:\omega\subseteq \hdr^1\to\hdr^1\otimes\Omega^1_{\CS/\W}\to \omega \otimes\Omega^1_{\CS/\W},$$ 
and
$$D_\kappa=\schur_\kappa(D): \omega^\kappa\to\omega^\kappa \otimes \Omega^1_{\CS/\W},$$
for any dominant weight $\kappa$ of $\Levi$.

Abusing notation, we still denote by $D_\kappa$ the composition of $D_\kappa$ with $\ks^{-1}$, the inverse of the Kodaira--Spencer isomorphism.

Finally, for each sum-symmetric weight $\lambda$ of $\Levi$, of depth $e$, we write 
$$D_\kappa^\lambda:=\pi_{\kappa,\lambda}\circ ({\rm id}\otimes\pi_\lambda)\circ  D_\kappa^e:\omega^\kappa\to\omega^\kappa \otimes (\omega^2)^{\otimes e}\to \omega^\kappa\otimes \omega^\lambda\to\omega^{\kappa+\lambda}
$$ 
for any dominant weight $\kappa$ of $\Levi$.

\begin{prop}\label{Dfilter}
The operator
$$D:\omega\subseteq \hdr^1\to\hdr^1\otimes\Omega^1_{\CA/\CS}\to \omega \otimes\Omega^1_{\CS/\W}$$ 
preserves the canonical decomposition $\omega=\oplus_\co \omega_\co$, and the filtration $\omega_\bullet$ induced by the slope filtration of $\hdr^1$. That is, for each orbit $\co\in\FO$, and each slope $a$ of the subcrystal $M_\co$,
$$D\left((\omega_\co)_a\right)\subset (\omega_\co)_a\otimes \Omega^1_{\CS/\W}.$$
\end{prop}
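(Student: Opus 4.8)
The plan is to reduce the statement to two facts already available: the Gauss--Manin connection $\nabla$ preserves the slope filtration of $\hdr^1$ over $\cS$ (noted just before Proposition~\ref{horizontalbasis}), and the filtered form of Proposition~\ref{unitroot}, namely that $U$ is a $\CD$-equivariant complement to $\omega$ which splits the slope filtration slope by slope: $(\hdr^1)_j=\omega_j\oplus U_j$. Write $\varpi\colon\hdr^1\twoheadrightarrow\omega$ for the projection modulo $U$, so that $D=(\varpi\otimes\id)\circ\nabla$.

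First I would dispense with the $\FO$-grading. The decomposition $\hdr^1=\oplus_{\co}M_\co^{\oplus r}$ (here $B=F$, so $r=1$) is the isotypic decomposition for the action of $\CO_F\otimes_\ZZ\ZZ_p=\prod_{u\mid p}\CO_{F,u}$ on $\hdr^1$; since this action comes from endomorphisms of $\CA$ and the Gauss--Manin connection is functorial in the abelian scheme, it is $\nabla$-horizontal, so $\nabla$ commutes with the central idempotents $\epsilon_u$ and $\nabla(M_\co)\subseteq M_\co\otimes\Omega^1_{\cS/\W}$ for every $\co$. By its construction in Proposition~\ref{unitroot}, $U=\oplus_\co U_\co$ with $U_\co\subseteq M_\co$, and $\omega=\oplus_\co\omega_\co$; hence $\varpi$, and therefore $D$, preserves the orbit decomposition. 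It then suffices to fix an orbit $\co$ and argue inside $M_\co$.

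Now fix $\co$ and a slope $a$ of $M_\co$. Since $\nabla$ preserves the slope filtration, $\nabla\bigl((M_\co)_a\bigr)\subseteq(M_\co)_a\otimes\Omega^1_{\cS/\W}$. By Proposition~\ref{unitroot} we have $(M_\co)_a=(\omega_\co)_a\oplus(U_\co)_a$, so the restriction of $\varpi$ to $(M_\co)_a$ is precisely the projection onto $(\omega_\co)_a$ along $(U_\co)_a$; in particular $\varpi\bigl((M_\co)_a\bigr)=(\omega_\co)_a$. Consequently, for $x\in(\omega_\co)_a\subseteq(M_\co)_a$,
$$D(x)=(\varpi\otimes\id)\bigl(\nabla(x)\bigr)\ \in\ (\varpi\otimes\id)\bigl((M_\co)_a\otimes\Omega^1_{\cS/\W}\bigr)=(\omega_\co)_a\otimes\Omega^1_{\cS/\W},$$
which is the assertion. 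Equivalently, $D$ factors as $\omega\hookrightarrow\hdr^1\xrightarrow{\nabla}\hdr^1\otimes\Omega^1_{\cS/\W}\twoheadrightarrow\omega\otimes\Omega^1_{\cS/\W}$, a composite of three maps each compatible with the $\FO$-grading and with the slope filtration, hence so is $D$.

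There is no real obstacle internal to this proposition: all the substance sits in the inputs --- the existence of the filtered canonical complement $U$ (Proposition~\ref{unitroot}, itself resting on Moonen's analysis of the $\mu$-ordinary Dieudonn\'e module and the canonical splitting of the slope filtration over each $\cS^\wedge_{x_0}$) and the horizontality of $\nabla$ for the slope filtration. The one point that genuinely needs care is to invoke the \emph{filtered} statement $(\hdr^1)_j=\omega_j\oplus U_j$ rather than merely the bare decomposition $\hdr^1=\omega\oplus U$, since it is precisely this refinement that makes $\varpi$ compatible with the filtration.
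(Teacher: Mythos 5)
Your proof is correct and follows essentially the same route as the paper: invoke that $\nabla$ preserves the orbit decomposition and slope filtration, then use the filtered compatibility of the complement $U$ from Proposition~\ref{unitroot} to see that the projection $\varpi$ modulo $U$ also respects orbit and slope. You are right to flag that what is genuinely needed is the ``Moreover'' clause $(\hdr^1)_j=\omega_j\oplus U_j$ of Proposition~\ref{unitroot} and not merely the bare splitting $\hdr^1=\omega\oplus U$; the paper's two-line proof cites item~(3) of that proposition but is implicitly relying on this filtered refinement, which is stated immediately after the enumerated list there.
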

\begin{proof}
It follows from \cite[Proposition 2.1.9]{moonen} that the Gauss--Manin connection $\nabla$ of $\hdr^1$ preserves the subcrystals $(M_\co)_a$, for each orbit $\co\in\FO$ and each slope $a$ of $M_\co$. Thus, the statement follows from Part \eqref{ur3} of Proposition \ref{unitroot}.
\end{proof}

Note that, by construction, the operator $D$ is $\W$-linear. Thus, for each $\tau\in \T$, 
$D\left(\omega_\tau\right)\subset \omega_\tau \otimes \Omega^1_{\CS/\W}.$
Proposition \ref{Dfilter} implies the operator $D$ induces a graded operator $\underline{D}$ on $\uo=\gr(\omega)$. That is,
$$\underline{D}=\gr(D): \uo \to \uo\otimes \Omega^1_{\CS/\W}.$$

Similarly to the construction in Section \ref{diffop}, starting from the differential operator $\ud:\uo\to \uo\otimes\Omega^1_{\CS/\W}$, 
we may construct new $p$-adic differential operators on the 
sheaves $\uo^{\kappa'}$ over the $\mu$-ordinary Igusa tower.

\begin{defi}\label{ud}
Let $\kappa'$ be a dominant weight of $\levi$.  We define the differential operator
$$\ud_{\kappa'}:=\schur_{\kappa'}(\ud): \uo^{\kappa'}\to\uo^{\kappa'} \otimes \Omega^1_{\CS/\W}.$$

Abusing notation, we also denote by $\ud_{\kappa'}$ the composition of $\ud_{\kappa'}$ with the inverse $\ks^{-1}$ of the Kodaira--Spencer isomorphism.  That is, 
\[\ud_{\kappa'}: \uo^{\kappa'}\to\uo^{\kappa'} \otimes \omega^2. \]

For each 
sum-symmetric weight $\lambda'$ of $\Levi$ of depth $e$, we  define
$$\ud_{\kappa'}^{\lambda'}:=\pi_{\kappa',\lambda'}\circ ({\rm id}\otimes\pi_{\lambda'})\circ  \ud_{\kappa'}^e:\uo^{\kappa'}\to\uo^{\kappa'} \otimes (\omega^2)^{\otimes e}\to \uo^{\kappa'}\otimes \omega^{\lambda'}\to \uo^{\kappa'}\otimes \uo^{\lambda'}\to\uo^{\kappa'+\lambda'}.
$$ 
\end{defi}

\subsubsection{Differential operators locally}\label{DisDU}
Fix $x_0\in\CS(\F)$. Via the canonical splitting $\omega_{x_0}\cong \uo_{x_0}$ constructed in Proposition \ref{localsplit}, we obtain a decomposition of $D_{x_0}$ into blocks. Proposition \ref{Dfilter} implies that $D_{x_0}$ is block upper triangular, with $\ud_{x_0}$ on the block diagonal.  That is, we have a canonical factorization
$$D_{x_0}=\ud_{x_0}\circ\unipo_{x_0},$$ 
where $\unipo_{x_0}$ is unipotent block upper triangular.  

In the next section (Proposition \ref{DisD}), we establish the equality $D_{x_0}=\ud_{x_0}$, which implies the following decompositions of differential operators.
\begin{prop}\label{DKloc} Mantain the above notation. 
\begin{enumerate}
\item For any dominant  weight $\kappa$ of $\Levi$, 
\[D_{\kappa,x_0}=\bigoplus_{\kappa'\in\M_\kappa} \ud_{\kappa',x_0}. \]
\item For any dominant weight $\kappa$ and  sum-symmetric weight $\lambda$ of $\Levi$,
the morphism $D_{\kappa,x_0}^\lambda$ decomposes as a direct sum of morphisms
\[ {\mathcal D}^{\lambda'}_{\kappa'}:\uo_{x_0}^{\kappa'}\to \uo_{x_0}^{\kappa'+\lambda'}\]
for all $\kappa'\in\M_\kappa$, and $\lambda'\in\M_{\lambda}$ such that
$\kappa'+\lambda'\in\M_{\kappa+\lambda}$.

Furthermore, for all  $\kappa'\in\M_\kappa$,
\[{\mathcal D}^{\lambda}_{\kappa'}=\ud^{\lambda}_{\kappa',x_0}.\] 
 
In  particular, if $\lambda$ is a positive scalar weight, we have
\[D^{\lambda}_{\kappa,x_0}=\bigoplus_{\kappa'\in\M_\kappa} \ud^{\lambda}_{\kappa',x_0}.\]
\end{enumerate}
\end{prop}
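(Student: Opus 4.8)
The plan is to deduce both statements from the single substantive input $D_{x_0}=\ud_{x_0}$ of Proposition~\ref{DisD} (proved in the next section), which says that, under the canonical splitting $\omega_{x_0}\cong\uo_{x_0}$ of Proposition~\ref{localsplit}, the connection $D=\varpi\circ\nabla$ on $\omega$ (where $\varpi$ is the projection modulo the canonical complement $U$ of Proposition~\ref{unitroot}, and $D$ preserves the slope filtration by Proposition~\ref{Dfilter}) is carried to $\ud=\gr(D)$ on $\uo$; equivalently, the unipotent factor $\unipo_{x_0}$ in $D_{x_0}=\ud_{x_0}\circ\unipo_{x_0}$ is trivial. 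Granting this, the conceptual content of the present proposition is only that this identification is compatible with Schur functors and with the projections defining the differential operators: concretely, the splitting at $x_0$ is the reduction of the $\para$-torsor $\Pmu$ of Proposition~\ref{grade} to its Levi $\Emu$, and everything then reduces to the decomposition $\rho_\kappa\vert_{\levi}=\bigoplus_{\kappa'\in\M_\kappa}\varrho_{\kappa'}$, which is defined over $W(\ck)$ by the Remark in Section~\ref{LRrule}.

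For part (1), I would argue as follows. Since $D_\kappa=\schur_\kappa(D)$ (post-composed with $\ks^{-1}$, which at $x_0$ is the split Kodaira--Spencer localization common to $D$ and $\ud$), localizing at $x_0$ and invoking $D_{x_0}=\ud_{x_0}$ gives $D_{\kappa,x_0}=\schur_\kappa(\ud_{x_0})$. Because $\omega^\kappa$ is the bundle associated to $\Pmu$ via $\rho_\kappa$, at $x_0$ it becomes the bundle associated to $\Emu$ via $\rho_\kappa\vert_{\levi}$, hence the direct sum $\bigoplus_{\kappa'\in\M_\kappa}\uo^{\kappa'}_{x_0}$ of Proposition~\ref{localsplit}(2). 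A connection on a bundle associated to a torsor-with-connection decomposes along any decomposition of the representation, so $\schur_\kappa(\ud_{x_0})$ acts on the summand $\uo^{\kappa'}_{x_0}$ as $\schur_{\kappa'}(\ud)_{x_0}=\ud_{\kappa',x_0}$, which is the asserted equality.

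For part (2), I would write $D_\kappa^e$ as an iterated composite of operators of type $D$ (Section~\ref{diffop}) and apply part (1) at each stage --- using that the filtration on $\omega^2$ also splits at $x_0$, so $\omega^2_{x_0}\cong\uo^2_{x_0}$ (Proposition~\ref{unitroot}) --- to see that $D^e_{\kappa,x_0}$ restricts on $\uo^{\kappa'}_{x_0}$ to the $e$-fold iterate $\ud^e_{\kappa',x_0}\colon\uo^{\kappa'}_{x_0}\to\uo^{\kappa'}_{x_0}\otimes(\uo^2)^{\otimes e}_{x_0}$ (defined from $\ud_{\kappa'}$ in analogy with $D^e_\kappa$). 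The remaining maps $\id\otimes\pi_\lambda\colon(\omega^2)^{\otimes e}\to\omega^\lambda$ and $\pi_{\kappa,\lambda}\colon\omega^\kappa\otimes\omega^\lambda\to\omega^{\kappa+\lambda}$ are induced by $\Levi$-equivariant surjections of representations; restricting them to $\levi$ at $x_0$ and inserting $\omega^\lambda_{x_0}=\bigoplus_{\lambda'\in\M_\lambda}\uo^{\lambda'}_{x_0}$ and $\omega^{\kappa+\lambda}_{x_0}=\bigoplus_{\mu'\in\M_{\kappa+\lambda}}\uo^{\mu'}_{x_0}$, and using the explicit description of the canonical projections via the functionals $\ellcan^\kappa$ of \cite[Section~2.4]{EFMV}, each becomes a direct sum of the corresponding $\levi$-equivariant maps, the contribution out of $\varrho_{\kappa'}\otimes\varrho_{\lambda'}$ being nonzero only toward $\varrho_{\kappa'+\lambda'}$ and only when $\kappa'+\lambda'\in\M_{\kappa+\lambda}$. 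Composing, $D^\lambda_{\kappa,x_0}$ restricts on $\uo^{\kappa'}_{x_0}$ to a direct sum of morphisms ${\mathcal D}^{\lambda'}_{\kappa'}\colon\uo^{\kappa'}_{x_0}\to\uo^{\kappa'+\lambda'}_{x_0}$ over $\lambda'\in\M_\lambda$ with $\kappa'+\lambda'\in\M_{\kappa+\lambda}$. The term $\lambda'=\lambda$ is legitimate since $\lambda$, being dominant for $\Levi$, is dominant for $\levi$, so $\lambda\in\M_\lambda$, and $\kappa'+\lambda\in\M_{\kappa+\lambda}$ for every $\kappa'\in\M_\kappa$ (a Littlewood--Richardson computation); for it the recipe $\pi_{\kappa',\lambda}\circ(\id\otimes\pi_\lambda)\circ\ud^e_{\kappa'}$ is exactly that of Definition~\ref{ud}, so ${\mathcal D}^\lambda_{\kappa'}=\ud^\lambda_{\kappa',x_0}$. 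Finally, if $\lambda$ is a positive scalar weight then $\M_\lambda=\{\lambda\}$ and $\M_{\kappa+\lambda}=\{\kappa'+\lambda:\kappa'\in\M_\kappa\}$, so only the diagonal terms survive and $D^\lambda_{\kappa,x_0}=\bigoplus_{\kappa'\in\M_\kappa}\ud^\lambda_{\kappa',x_0}$.

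The hard part will be the bookkeeping in part (2) --- everything else being formal once Proposition~\ref{DisD} is granted. One has to carry out the whole argument with the abstract $W(\ck)$-rational decompositions of the representations rather than with the explicit Young symmetrizers, which (as the Remark in Section~\ref{LRrule} warns) need not respect these decompositions when the weights carry multiplicities; and one must pin down exactly which triples $(\kappa',\lambda',\kappa'+\lambda')$ actually contribute and check, by unwinding Definition~\ref{ud}, that the surviving diagonal contribution equals $\ud^\lambda_{\kappa'}$ on the nose.
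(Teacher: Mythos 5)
Your proposal is correct and follows essentially the same route as the paper: both arguments take $D_{x_0}=\ud_{x_0}$ (Proposition \ref{DisD}, announced in the paragraph preceding the proposition) as the only substantive input, and then reduce everything to the compatibility of the canonical splitting $\omega^\kappa_{x_0}\cong\bigoplus_{\kappa'\in\M_\kappa}\uo^{\kappa'}_{x_0}$ with the Schur-functor projections $\omega^{\otimes d}\to\schur_\kappa(\omega)$, $(\omega^2)^{\otimes e}\to\omega^\lambda$, and $\pi_{\kappa,\lambda}$. Your torsor/equivariance phrasing of the compatibility is a more conceptual packaging of what the paper states tersely as ``compatibility among the projections,'' but it is the same argument; in particular your identification of the $\lambda'=\lambda$ diagonal block with $\ud^\lambda_{\kappa',x_0}$ and the observation that $\M_\lambda=\{\lambda\}$ for scalar $\lambda$ are exactly the steps in the paper's proof.
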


\begin{proof}
The first equality follows from the compatibility among the projections \[\omega^{\otimes d}\to\schur_\kappa(\omega)\text{ and }\uo^{\otimes d}\to \uo^{\otimes \underline{d}}\to \schur_{\kappa'}(\uo).\] 
Note that for each $\kappa'\in\M_\kappa$, $|\kappa'|=\underline{d}$ is a partition of $d=|\kappa|$.
Similarly, the second equality follows from the compatibility among the projections \[(\omega^2)^{\otimes e}\to \omega^\lambda\to\uo^\lambda \text{ and }
(\uo^2)^{\otimes e}\to\uo^{\lambda}.\] 
Finally, to deduce the last equality if suffices to recall that $\M_\lambda=\{\lambda\}$ when $\lambda$ is scalar.
\end{proof}

\begin{cor}\label{compare}\label{DKpi}
Maintain the above notation and assumptions.
Let $\kappa_1,\kappa_2$ be two 
dominant weights of $\Levi$. Assume $\kappa_2-\kappa_1$ is  
sum-symmetric. Then  for any automorphic form $f$ of weight $\kappa_1$,    we have
\[ \pi^{\kappa_2}D^{\kappa_2-\kappa_1}_{\kappa_1} (f)= \ud^{\kappa_2-\kappa_1}_{\kappa_1}(\pi^{\kappa_1} f).\]
In particular, if $\kappa_1$ is a scalar weight, then
\[ \pi^{\kappa_2}D^{\kappa_2-\kappa_1}_{\kappa_1} (f)= \ud^{\kappa_2-\kappa_1}_{\kappa_1}(f).\]
\end{cor}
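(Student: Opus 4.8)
\textbf{Plan for Corollary \ref{compare}.} The statement compares two operations applied to an automorphic form $f$ of dominant weight $\kappa_1$ of $\Levi$, assuming $\kappa_2 - \kappa_1$ is sum-symmetric: on one side we apply the $\Levi$-level differential operator $D^{\kappa_2-\kappa_1}_{\kappa_1}$ and then project via $\pi^{\kappa_2}: \omega^{\kappa_2}\twoheadrightarrow\uo^{\kappa_2}$; on the other side we first project $f$ via $\pi^{\kappa_1}: \omega^{\kappa_1}\twoheadrightarrow\uo^{\kappa_1}$ to a $p$-adic form OMOL and then apply the $\levi$-level operator $\ud^{\kappa_2-\kappa_1}_{\kappa_1}$. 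The plan is to verify the identity pointwise at every geometric point $x_0\in\cS(\F)$ of the (connected) $\mu$-ordinary locus; by the injectivity of the localization maps built from the $\Phi^{\kappa,\kappa'}_x$ (Proposition \ref{psi} together with the discussion following Proposition \ref{localsplit}), an equality of sections of $\uo^{\kappa_2}$ over $\cS$ can be checked after localizing at all such $x_0$ and all $x\in\Ig(\F)$ above them. This reduces the global identity to the local statements in Proposition \ref{DKloc} and the key input Proposition \ref{DisD}.

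\textbf{Key steps.} First I would set $\lambda := \kappa_2 - \kappa_1$, a sum-symmetric weight of $\Levi$, say of depth $e$, so that $D^{\lambda}_{\kappa_1}$ lands in $\omega^{\kappa_1+\lambda}=\omega^{\kappa_2}$. Fix $x_0\in\cS(\F)$ and use the canonical splitting $\omega_{x_0}\cong\uo_{x_0}$ of Proposition \ref{localsplit}, which induces $\omega^{\kappa_1}_{x_0}\cong\oplus_{\kappa_1'\in\M_{\kappa_1}}\uo^{\kappa_1'}_{x_0}$ and similarly for $\kappa_2$. Under this splitting, $\pi^{\kappa_1}$ is (by Proposition \ref{grade}(3) and its local incarnation) the projection onto the summand indexed by $\kappa_1'=\kappa_1\in\M_{\kappa_1}$, and likewise $\pi^{\kappa_2}$ projects onto the $\kappa_2$-summand. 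Second, by Proposition \ref{DisD} we have $D_{x_0}=\ud_{x_0}$, i.e.\ the unipotent block-upper-triangular factor $\unipo_{x_0}$ in the factorization $D_{x_0}=\ud_{x_0}\circ\unipo_{x_0}$ of Section \ref{DisDU} is the identity; consequently Proposition \ref{DKloc}(2) applies, giving that $D^{\lambda}_{\kappa_1,x_0}$ decomposes as a direct sum of the operators $\ud^{\lambda'}_{\kappa_1'}:\uo^{\kappa_1'}_{x_0}\to\uo^{\kappa_1'+\lambda'}_{x_0}$ over $\kappa_1'\in\M_{\kappa_1}$ and $\lambda'\in\M_\lambda$ with $\kappa_1'+\lambda'\in\M_{\kappa_1+\lambda}=\M_{\kappa_2}$, and moreover $\ud^{\lambda}_{\kappa_1',x_0}$ is the summand corresponding to $\lambda'=\lambda$. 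Third, I would compose with $\pi^{\kappa_2}$: this kills every summand whose target index differs from $\kappa_2$. Since we start from $\pi^{\kappa_1}$ applied to $f$ — equivalently, we first restrict to the $\kappa_1'=\kappa_1$ summand — only the blocks $\ud^{\lambda'}_{\kappa_1}$ with $\kappa_1+\lambda'=\kappa_2$, i.e.\ $\lambda'=\lambda$, survive on the right side, and only those same blocks survive after applying $\pi^{\kappa_2}$ on the left side. This yields $\pi^{\kappa_2}D^{\lambda}_{\kappa_1}(f)=\ud^{\lambda}_{\kappa_1}(\pi^{\kappa_1}f)$ locally at $x_0$, hence globally.

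\textbf{The special case.} When $\kappa_1$ is scalar, $\M_{\kappa_1}=\{\kappa_1\}$ by the remark after \eqref{rhokap-decomposition}, so $\pi^{\kappa_1}$ is an isomorphism $\omega^{\kappa_1}\cong\uo^{\kappa_1}$ and the identity reduces to $\pi^{\kappa_2}D^{\kappa_2-\kappa_1}_{\kappa_1}(f)=\ud^{\kappa_2-\kappa_1}_{\kappa_1}(f)$, which is the displayed ``in particular'' claim; this follows immediately from the general case.

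\textbf{Main obstacle.} The crux is the input $D_{x_0}=\ud_{x_0}$ (Proposition \ref{DisD}), i.e.\ vanishing of the off-diagonal (strictly block-upper-triangular) part of $D$ at $x_0$; this rests on Proposition \ref{GMST}, which says the Gauss--Manin connection applied to (the images under $\alpha$ of) a slope-adapted basis of $\omega_\X$ lands in $U\otimes\Omega^1$, so that after projecting modulo $U$ the ``connecting'' terms between different graded pieces drop out. Granting that, the remaining work is purely bookkeeping with Schur functors and the projections $\pi_{\kappa,\lambda}$, $\pi_\lambda$, using their compatibility with the graded projections exactly as in the proof of Proposition \ref{DKloc}; the only subtlety I anticipate is keeping careful track of which $\lambda'\in\M_\lambda$ can contribute (here the hypothesis that $\lambda$ is sum-symmetric, so $\lambda$ itself is the relevant ``diagonal'' constituent, is what makes the matching clean), and confirming that $\pi^{\kappa_2}$ on the $\Levi$-side corresponds under the local splitting to the same projection used to define $\ud^{\lambda}_{\kappa_1}$ on the $\levi$-side.
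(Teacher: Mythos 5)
Your argument is correct and is essentially the paper's own route: the corollary is stated there without a separate proof as an immediate consequence of Proposition \ref{DKloc} (via Propositions \ref{localsplit} and \ref{DisD}), and you flesh this out exactly as intended by localizing at points of $\cS(\F)$, invoking the canonical splittings, and matching blocks. The one step worth making explicit is why no cross-term ${\mathcal D}^{\lambda'}_{\kappa_1'}$ with $\kappa_1'\neq\kappa_1$ contributes to the $\kappa_2$-component on the left-hand side: since every $\kappa_1'\in\M_{\kappa_1}$ satisfies $\kappa_1'\leq\kappa_1$ and every $\lambda'\in\M_{\lambda}$ satisfies $\lambda'\leq\lambda$ in the dominance order, the equality $\kappa_1'+\lambda'=\kappa_1+\lambda$ forces $\kappa_1'=\kappa_1$ and $\lambda'=\lambda$, so the only surviving block is $\ud^{\lambda}_{\kappa_1,x_0}$ applied to the $\kappa_1$-component of $f$.
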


\subsubsection{The action of the differential operators on $u$-expansions}\label{action-section}
In this section, we describe the action of the differential operators on $u$-expansions, in certain cases.  This description is crucial for our approach to constructing families of $p$-adic automorphic forms.

We fix a point $x\in\Ig(\W)$, and write $\cR$ for the complete local ring of $\Ig$ at $x$.  In the next section, we explicitly compute the action of the differential operators on Serre--Tate expansions. By abuse of notation, we will still denote by $D$ (respectively, $\ud$, $\ud_\kappa$, $\ud_\kappa^e$) the localization at $x$ of the differential operators $D$ (respectively, $\ud$, $\ud_\kappa$, $\ud_\kappa^e$), i.e., their base change to $\cR$.

For convenience, we write $\CL:=\uo_\X$, $\CL^2:=\uo^2_\X$, and $\CL^\kappa:=\schur_\kappa(\CL)$, for all $\kappa$ dominant weights of $\levi$ (all regarded as  $\W$-representations of $\levi$).

Abusing notation, we still denote by $\alpha$ the universal Igusa structure over $\cR$ composed with the canonical splitting of $\omega$, i.e., the $\cR$-linear isomorphism \[\alpha=\alpha_x:\CL\otimes _\W \cR\to\uo.\] 
For each dominant weight $\kappa$ of $\levi$, we define \[\alpha^\kappa:\CL^\kappa\otimes_\W\cR\to \uo^\kappa\] to be the $\cR$-linear isomorphism induced by $\alpha$.

We denote by $d:\cR\to\Omega^1_{\cR/\W}$ the universal derivation on $\cR$.

\begin{prop}\label{DisD}
Maintain the above notation. 
\begin{enumerate}
\item After  identifying $\omega\cong \uo$ via the canonical splitting, we have

\[(\alpha\otimes_{\CO_\Ig} {\rm id}_{\Omega^1_{\Ig/\W}} )^{-1}\circ D \circ \alpha= ({\rm id}_{\CL}\otimes_\W d).\]

In particular, we deduce $D=\ud$.

\item
For any dominant weight $\kappa$ of $\levi$, we have 

\[(\alpha^\kappa \otimes_{\CO_\Ig} {\rm id}_{\Omega^1_{\Ig/\W}} )^{-1}\circ \ud_\kappa \circ \alpha^\kappa= ({\rm id}_{\CL^\kappa }\otimes_\W d).\]\label{DisD2}
\end{enumerate}
\end{prop}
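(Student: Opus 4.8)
\textbf{Proof plan for Proposition \ref{DisD}.}

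The plan is to reduce everything to the first statement, which is a statement about the Gauss--Manin connection acting on the $\alpha$-image of a $\W$-basis of $\uo_\X$. The starting point is Proposition \ref{horizontalbasis} (Proposition \ref{GMST}), which tells us that for any $\eta\in\omega_\X$, the image $\underline{\nabla}(\alpha(\eta))$ lies in $\U\otimes\Omega^1_{\cS/\W}$, and that at a point $x\in\Ig(\F)$, via the canonical splitting $\omega_x\cong\uo_x$, one has $\nabla_x(\alpha(\eta))\in U_x\otimes\Omega^1_{\cS/\W,x}$. Since $D=(\varpi\otimes\id)\circ\nabla$ with $\varpi$ the projection modulo $U$, this says precisely that $D(\alpha(\eta))=0$ for every $\eta$ in the (horizontal) $\W$-lattice $\alpha(\omega_\X)$. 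First I would spell out that $\uo_\X=\omega_\X$ is a free $\W$-module on which $\alpha$ is $\cR$-linear, so that $\alpha(\CL\otimes_\W\cR)=\uo$ with $\alpha$ sending the constant sections $\CL\otimes 1$ to the horizontal sections killed by $D$. Then for a general section $s=\alpha(\sum_i \ell_i\otimes r_i)$ with $\ell_i\in\CL$ and $r_i\in\cR$, the Leibniz rule gives $D(s)=\sum_i \alpha(\ell_i)\otimes dr_i = (\alpha\otimes\id)\big((\id_\CL\otimes d)(\sum_i \ell_i\otimes r_i)\big)$, which is exactly the claimed identity $(\alpha\otimes\id)^{-1}\circ D\circ\alpha=\id_\CL\otimes d$. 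Because the right-hand side visibly factors through the graded quotient and carries no unipotent part, comparing with the factorization $D_{x_0}=\ud_{x_0}\circ\unipo_{x_0}$ from Section \ref{DisDU} forces $\unipo_{x_0}=\id$, i.e. $D=\ud$ as stated.

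For part \eqref{DisD2}, the idea is that Schur functors are functorial and compatible with the product rule: $\ud_\kappa=\schur_\kappa(\ud)$ is built from $\ud$ by tensor operations and the Young symmetrizer projection $\pi_\kappa$. Applying $\schur_\kappa$ to the identity $(\alpha\otimes\id)^{-1}\circ\ud\circ\alpha=\id_\CL\otimes d$ and using that $\alpha^\kappa=\schur_\kappa(\alpha)$ intertwines $\schur_\kappa$ of everything, one gets $(\alpha^\kappa\otimes\id)^{-1}\circ\ud_\kappa\circ\alpha^\kappa=\id_{\CL^\kappa}\otimes d$. Concretely I would check this on a basis: on $\uo^{\otimes d}$ the connection $\ud^{\otimes d}$ extended by Leibniz satisfies $(\alpha^{\otimes d}\otimes\id)^{-1}\circ\ud^{\otimes d}\circ\alpha^{\otimes d}=\id_{\CL^{\otimes d}}\otimes d$ by the same Leibniz computation as in part (1) (the horizontal lattice $\alpha(\CL)^{\otimes d}$ is killed in the $\Omega^1$-direction relative to the constant structure), and then the projection $\pi_\kappa:\uo^{\otimes d}\to\uo^\kappa$ is $\CO_\cS$-linear and compatible with $\alpha$, hence commutes with the asserted identity, giving the result for $\uo^\kappa$.

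The only real subtlety — and the step I would be most careful about — is the passage from the pointwise/horizontality statement of Proposition \ref{GMST} (stated over $\cR=\CO_{\Ig,x}^\wedge$, which is the setting of \cite[Section 2.1.7]{moonen}) to the clean factorization $D=\alpha\circ(\id_\CL\otimes d)\circ\alpha^{-1}$ as operators on all of $\uo$ over $\cR$, not merely on the image of the constant lattice. This hinges on two facts: that $\alpha$ is genuinely $\cR$-linear (so Leibniz applies to arbitrary $\cR$-combinations of basis elements), and that the canonical splitting $\omega\cong\uo$ used to define $D$ agrees with the one implicit in $\alpha$ — both of which are in place by the construction of the Igusa tower and by Propositions \ref{localsplit} and \ref{unitroot}. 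Once these compatibilities are recorded, the rest is a formal Leibniz-rule manipulation. I would also remark that the identification of $D$ with $\ud$ (part 1) is exactly what is needed to justify the block-triangularity claim of Section \ref{DisDU} collapsing to $\unipo_{x_0}=\id$, closing the loop with Proposition \ref{DKloc}.
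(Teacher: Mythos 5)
Your proposal is correct and takes essentially the same route as the paper, which simply cites Proposition \ref{horizontalbasis} and leaves the Leibniz-rule unpacking, the identification $D=\ud$ via the vanishing of the unipotent part, and the Schur-functor compatibility as implicit; you have merely made those steps explicit.
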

\begin{proof}
The statement follows immediately from Proposition \ref{horizontalbasis}. \end{proof}

\begin{definition}\label{XI}
We define \[\Xi:=\ks^{-1}\circ d:\cR\to\Omega^1_{\cR/\W}\cong  \CL^2\otimes_\W \cR.\]
For any integer $e\geq 1$, we write $\Xi^e:= ({\rm id}_{(\CL^2)^{\otimes e-1}}\otimes\Xi)\circ \cdots \circ \Xi:\cR\to (\CL^2)^{\otimes e}\otimes_\W \cR.$ \end{definition}

We also write
$\alpha^{\kappa,e}:=\alpha^\kappa\otimes_\cR (\alpha^2)^{\otimes e} :\left(\CL^\kappa\otimes_\W(\CL^2)^{\otimes e}\right)\otimes_\W\cR \to \uo^\kappa\otimes_\cR (\uo^2)^{\otimes e}.$

With the new notation, Part \eqref{DisD2} of Proposition \ref{DisD} implies the following description of the operators $\ud_\kappa^e$.

\begin{prop}\label{disxi}
Maintain the above notation. 
For any dominant weight $\kappa$ of $\levi$ and any  integer $e\geq 1$, we have 
\[(\alpha^{\kappa,e} \otimes_{\CO_\Ig} {\rm id}_{\Omega^1_{\Ig/\W}} )^{-1}\circ \ud^e_\kappa \circ \alpha^\kappa= ({\rm id}_{\CL^\kappa }\otimes_\W \Xi^e).\]
\end{prop}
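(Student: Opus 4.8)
The plan is to reduce Proposition \ref{disxi} to Proposition \ref{DisD}\eqref{DisD2} by iterating the single-step statement and carefully bookkeeping the Kodaira--Spencer identifications. Recall that $\ud_\kappa^e$ is, by Equation \eqref{subscript-equation}, the $e$-fold composite $\ud_{\kappa\otimes(\st\otimes\st)^{e-1}}\circ\cdots\circ\ud_{\kappa\otimes(\st\otimes\st)}\circ\ud_\kappa$, where at each stage we have also composed with $\ks^{-1}$ to land in $\uo^\kappa\otimes(\uo^2)^{\otimes e}$ rather than in a tensor power of $\Omega^1_{\Ig/\W}$. So the first step is to unwind this composite: writing $\ud_\kappa^1 = \ks^{-1}\circ\ud_\kappa$ (in the sense of Definition \ref{ud}), we have $\ud_\kappa^e = (\id_{(\uo^2)^{\otimes e-1}}\otimes\ks^{-1})\circ\ud_{\kappa\otimes(\st\otimes\st)^{e-1}}\circ\cdots$, and similarly $\Xi^e$ is built from $\Xi = \ks^{-1}\circ d$ by the formula in Definition \ref{XI}. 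Both sides are thus assembled by the same recursive recipe from their single-step versions, so it suffices to check the base case $e=1$ and then run an induction on $e$.

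For the base case $e=1$, Proposition \ref{DisD}\eqref{DisD2} gives
\[
(\alpha^\kappa\otimes\id_{\Omega^1_{\Ig/\W}})^{-1}\circ\ud_\kappa\circ\alpha^\kappa = \id_{\CL^\kappa}\otimes_\W d,
\]
where here $\ud_\kappa$ denotes the version landing in $\uo^\kappa\otimes\Omega^1_{\Ig/\W}$ (before composing with $\ks^{-1}$). Composing on the left with $\id_{\uo^\kappa}\otimes\ks^{-1}$ and using that $\alpha^2$ intertwines the $\cR$-linear source and target of $\ks^{-1}$ — this is exactly the statement, coming from the final Proposition of Section \ref{ST-mu-LT} together with Remark \ref{KS-sub}, that $\alpha\otimes\alpha^\vee$ carries the $\W$-lattice $\gr^i(\omega_\X)_\tau\otimes\gr^j(\omega_{\X^\vee})_\tau$ onto a $\W$-structure of $\gr^{i,j}_\tau(\uo^2)$ compatible with Frobenius — we obtain
\[
(\alpha^{\kappa,1}\otimes\id_{\Omega^1_{\Ig/\W}})^{-1}\circ\ud^1_\kappa\circ\alpha^\kappa = \id_{\CL^\kappa}\otimes_\W(\ks^{-1}\circ d) = \id_{\CL^\kappa}\otimes_\W\Xi.
\]
The inductive step then applies the $e=1$ case to the representation $\kappa\otimes(\st\otimes\st)^{e-1}$ of $\levi$ (noting that $\uo^{\kappa}\otimes(\uo^2)^{\otimes(e-1)}$ is, up to the Schur projections which are all $\levi$-equivariant and commute with $\alpha$, a direct summand of a sheaf of the form $\uo^{\kappa''}$, so Proposition \ref{DisD}\eqref{DisD2} applies) and composes with the inductive hypothesis; the compatibility of the two chains of Schur/Young projections on the $\omega$-side and the $\uo$-side, already invoked in the proof of Proposition \ref{DKloc}, guarantees the composites match up.

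The main obstacle I expect is not the induction itself but verifying that the Kodaira--Spencer isomorphism is compatible with the trivialization $\alpha$ in the precise sense needed — i.e., that $(\alpha^2)^{-1}\circ\ks\circ\alpha^\kappa$-type diagrams commute, or equivalently that $\ks$ carries the $\alpha$-image of the standard $\W$-basis of $\CL^2$ to the $\W$-lattice of invariant differentials of the relevant cascade group $\bG_{a_i,a_j}$. This is exactly the content of the two Propositions preceding Remark \ref{MULT}, whose proofs rest on the Frobenius-compatibility $\nabla\circ F^e = (f\otimes\id)\circ F^e$ and Moonen's explicit basis from \cite[Section 1.2.3]{moonen}; so the work is to cite these correctly and track how the $\lambda$-symmetrization in the definition of $\uo^2$ interacts with the orbits $\co=\co^*$. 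Once that compatibility is in hand, the proposition follows formally, and the proof can be stated in a couple of lines: ``By Proposition \ref{DisD} and the definitions of $\Xi^e$ and $\ud_\kappa^e$, both sides are obtained by iterating the $e=1$ identity; an induction on $e$, using at each step the compatibility of $\alpha^2$ with $\ks^{-1}$ and of the Schur projections on the $\omega$- and $\uo$-sides, completes the argument.''
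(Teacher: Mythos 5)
Your proposal takes the same route as the paper: Proposition~\ref{disxi} is stated there as an immediate consequence of Proposition~\ref{DisD}\eqref{DisD2}, and your reduction to the $e=1$ case followed by an induction on $e$ (applying the $e=1$ statement to $\kappa\otimes(\st\otimes\st)^{e-1}$ and its constituents, with the Schur/Young projections intertwining $\alpha$ on both sides as in Proposition~\ref{DKloc}) is exactly how one would unpack that one-line justification.

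One small correction of emphasis: the ``main obstacle'' you flag --- that $\ks$ should carry the $\alpha$-image of the $\W$-lattice $\CL^2$ onto the $\W$-lattice of invariant differentials of the cascade groups $\bG_{a_i,a_j}$, i.e.\ the Frobenius-compatibility content of the two propositions before Remark~\ref{MULT} --- is \emph{not} needed here. In Definition~\ref{XI} the identification $\Omega^1_{\cR/\W}\cong\CL^2\otimes_\W\cR$ is simply the $\cR$-linear isomorphism $(\ks_x\circ\alpha^2)^{-1}$ built from the canonical splitting at $x$ and the Igusa trivialization; no statement about $\W$-lattices enters. So the $e=1$ step is literally DisD\eqref{DisD2} composed with $\id\otimes\ks^{-1}$, and the equality $(\alpha^\kappa\otimes\ks^{-1})=\alpha^{\kappa,1}\circ(\id\otimes(\alpha^2)^{-1}\ks^{-1})$ is definitional. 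The deeper lattice-compatibility facts you cite are indeed what make the later \emph{explicit} Serre--Tate expansion computations (Remark~\ref{MULT}, Remark~\ref{LT}, Proposition~\ref{thetacong}) work, but they are not prerequisites for Proposition~\ref{disxi} itself, which is purely formal once DisD\eqref{DisD2} is in hand.
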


\subsection{p-adic differential operators on $p$-adic forms OMOIT}
In this section, we introduce the $p$-adic operators that act on the space of $p$-adic automorphic forms OMOIT.  When the ordinary locus is nonempty, this operator agrees with the $p$-adic operator conventionally denoted $\Theta$ (see, e.g., \cite{kaCM, EDiffOps, E09, EFMV, dSG}).

Adapting the conventions of \cite[Section 5.1]{EFMV}  (e.g., replacing $\Levi$ with $\levi$ and the sheaves $\omega^\kappa$ with $\uo^\kappa$), we deduce an analogue of \cite[Theorem 5.1.3]{EFMV} in our context, stating the existence, for each
sum-symmetric weight $\lambda$ of $\Levi$, of a (unique) operator $\Theta^\lambda$ on $\Vmu$ which interpolates the operators $\ud_\kappa^\lambda$ from Definition \ref{ud}.

\newcommand{\tlcan}{{\tilde{\ell}}} 
\newcommand{\lcan}{{{\ell}}} 

For simplicity, abusing notation in the following, we still write $\ud_\kappa^\lambda$ for the map on global sections
\[\ud_\kappa^\lambda(\cS):H^0(\cS,\uo^\kappa)\to H^0(\cS, \uo^{\kappa+\lambda}),\]
for any $\kappa$ dominant weight of $\levi$ and $\lambda $ sum-symmetric weight of $\Levi$.

Fix $x\in\Ig(\W)$. As in Section \ref{action-section},
 we denote by $\cR$ the complete local ring of $\Ig$ at $x$, and by $\loc_x:V\to \cR$ the localization map at $x$.
 
 \begin{defn}\label{thetadefn}
For each sum-symmetric weight $\lambda$ of $\levi$, of depth $e$, we define 
\[\theta^\lambda:=(\tlcan^\lambda\otimes {\rm id}_\cR)\circ \Xi^e:\cR\to (\CL^2)^{\otimes e}\otimes_\W \cR\to\cR\]
with  $\Xi^e$ as in Definition \ref{XI}, and 
$\tlcan^\lambda:=\lcan^\lambda\circ \pi_\lambda : (\CL^2)^{\otimes e}\to\CL^\lambda \to \W$ defined similarly to \cite[Definition 2.4.2]{EFMV}, i.e., as the composition of $\lcan^\lambda$ with the projections $\pi_\lambda$ defined by the generalized Young symmetrizer $c_\lambda$. (Recall that the condition $\lambda $ sum-symmetric is to ensure that the map $\tlcan^\lambda $ is non-zero.)
\end{defn}

\begin{rmk}\label{theta}
It follows from the definitions, together with Propositions \ref{DKloc} and \ref{disxi}, that for any sum-symmetric weight $\lambda$ of $\levi$, 
\[\theta^\lambda\circ \Psi_{\kappa,x}=\Psi_{\kappa+\lambda,x}\circ {\mathcal D}_{\kappa}^{\lambda},\]
for all dominant weights $\kappa$ of $\levi$.

In particular, if $\lambda$ is a sum-symmetric weight of $\Levi$, then
\[\theta^\lambda \circ \loc_x\circ \Psi_{\kappa}= \loc_x\circ \Psi_{\lambda + \kappa}\circ \ud_\kappa^\lambda, \]
for all dominant weights $\kappa$ of $\levi$. 
\end{rmk}

\begin{thm}\label{THETA}
For each sum-symmetric weight $\lambda$ of $\Levi$, there exists a unique operator
  \begin{align*}
\Theta^\lambda: V^N\rightarrow V^N
\end{align*}
such that 
$\Theta^\lambda\circ \Psi_\kappa = \Psi_{\lambda+\kappa}\circ \ud_{\kappa}^{\lambda},$ for all dominant weights $\kappa$. 

The $p$-adic differential operator $\Theta^\lambda$ satisfies the properties
\begin{enumerate}
\item $\Theta^\lambda(\Vmu[\kappa])\subseteq \Vmu[\lambda+\kappa]$.
\item $\loc_x\circ \Theta^\lambda=\theta^\lambda\circ \loc_x$.

\end{enumerate}

\end{thm}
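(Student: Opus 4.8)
The plan is to construct $\Theta^\lambda$ by interpolating the operators $\ud_\kappa^\lambda$ over the $p$-adically dense image of $\Psi$, exactly as in the proof of \cite[Theorem 5.1.3]{EFMV}, with $\Levi$ replaced by $\levi$ and $\omega^\kappa$ replaced by $\uo^\kappa$. First I would recall from Proposition \ref{psi} that $\Psi = \oplus_\kappa \Psi_\kappa$ is injective with $p$-adically dense image in $\Vmu$, so the formula $\Theta^\lambda \circ \Psi_\kappa := \Psi_{\lambda+\kappa} \circ \ud_\kappa^\lambda$ unambiguously defines $\Theta^\lambda$ on the dense subspace $\oplus_\kappa \Psi_\kappa\left(H^0(\cS,\uo^\kappa)\right)$ of $\Vmu^N$; uniqueness is then automatic once continuity is established. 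The key point is to show this densely-defined operator is uniformly continuous for the $p$-adic topology, so that it extends (uniquely) to all of $\Vmu^N$, and that the extension preserves $\Vmu^N$ (rather than just landing in $\Vmu$).

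The continuity is where the real work lies, and I expect it to be the main obstacle. The strategy is to pass to local expansions: by Proposition \ref{prop-st} (the $p$-adic $\uu$-expansion principle), a form $f\in\Vmu[\kappa]$ is determined mod $p^m$ by the collection $\{(g\cdot f)(\uu)\}_{g\in\torus(\Z_p)}$, and congruences between such forms can be read off from congruences among these $\uu$-expansions. So it suffices to understand $\theta^\lambda = (\tlcan^\lambda\otimes\id_\cR)\circ\Xi^e$ acting on $\cR = \W[\![\uu]\!]$, and to show $\theta^\lambda$ is $p$-adically continuous on $\cR$ — indeed, by Remark \ref{theta}, $\theta^\lambda\circ\loc_x\circ\Psi_\kappa = \loc_x\circ\Psi_{\lambda+\kappa}\circ\ud_\kappa^\lambda$, so continuity of $\Theta^\lambda$ reduces to continuity of $\theta^\lambda$ together with the injectivity-of-localization statements in Proposition \ref{prop-st}. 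The continuity of $\theta^\lambda$ follows from Proposition \ref{disxi}: $\Xi^e = \ks^{-1}\circ d$ iterated is built from the universal derivation $d:\cR\to\Omega^1_{\cR/\W}$ composed with the (integral, by the Kodaira--Spencer results of the previous section) isomorphism $\ks^{-1}$, so it carries $\W[\![\uu]\!]$ into $(\CL^2)^{\otimes e}\otimes_\W\W[\![\uu]\!]$ with no denominators, and applying the $\W$-linear functional $\tlcan^\lambda$ (which is integral since $\lambda$ is sum-symmetric, cf. Section \ref{autweight}) keeps us inside $\W[\![\uu]\!]$; hence $\theta^\lambda$ reduces mod $p^m$, which is precisely the required uniform continuity.

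Having extended $\Theta^\lambda$ continuously to $\Vmu^N$, I would verify the two listed properties. Property (2), $\loc_x\circ\Theta^\lambda = \theta^\lambda\circ\loc_x$, holds on the dense subspace by Remark \ref{theta} and extends to all of $\Vmu^N$ by continuity of both sides (using that $\loc_x$ and $\theta^\lambda$ are continuous). Property (1), $\Theta^\lambda(\Vmu[\kappa])\subseteq\Vmu[\lambda+\kappa]$, follows because the weight-$\kappa$ eigenspace $\Vmu[\kappa]$ is a closed subspace with dense intersection $\Psi_\kappa\left(H^0(\cS,\uo^\kappa)\right)$ (again using Proposition \ref{psi}, and that $\ud_\kappa^\lambda$ raises weight by $\lambda$ by construction in Definition \ref{ud}), and $\Theta^\lambda$ sends this dense subspace into $\Vmu[\lambda+\kappa]$; continuity and closedness finish it. One subtlety to check carefully: that $\Theta^\lambda$ indeed maps $\Vmu^N$ (the $\nilp(\Z_p)$-invariants) to itself — this should follow from the $\torus$-equivariance properties of $\theta^\lambda$ recorded implicitly in Remark \ref{theta} (namely that $\theta^\lambda$ intertwines the actions correctly, exactly as in \cite[Section 5.1]{EFMV}), so that $\Theta^\lambda$ commutes with the $\nilp(\Z_p)$-action up to the weight shift, and the $\nilp(\Z_p)$-invariance is preserved. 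I would cite \cite[Theorem 5.1.3]{EFMV} for the formal mechanics and emphasize that the only genuinely new input is the integrality of $\ks^{-1}$ and of the local description of $\Xi^e$ in the $\mu$-ordinary setting, which was established in Section \ref{structure-mo}.
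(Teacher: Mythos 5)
Your proposal follows essentially the same route as the paper's proof: define $\Theta^\lambda$ on the dense subspace $\mathrm{Im}(\Psi)$ via $\Psi\circ(\oplus_\kappa \ud_\kappa^\lambda)\circ\Psi^{-1}$ (using injectivity from Proposition \ref{psi}), then extend by continuity, with convergence checked locally on $\underline{u}$-expansions via Remark \ref{theta}, exactly as in \cite[Theorem 5.1.3]{EFMV}. Your additional elaborations (integrality of $\ks^{-1}$ and $\tlcan^\lambda$, explicit verification of properties (1)--(2), and the $\nilp(\Z_p)$-invariance check) are sound fillings-in of what the paper leaves implicit, not a different approach.
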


\begin{proof}
The argument of  \cite[Theorem 5.1.3]{EFMV} still applies here. Indeed, the injectivity of  $\Psi=\oplus_\kappa \Psi_\kappa$ allows us to define 
\[\Theta^\lambda_{\vert {\rm Im}(\Psi)}:=\Psi\circ (\oplus_\kappa \ud_\kappa^\lambda )\circ \Psi^{-1}.\] As  ${\rm Im}(\Psi)$ is dense in $V^{N_\mu(\Z_p)}$, in order to extend $\Theta^\lambda$ to $V^{N_\mu(\Z_p)}$  it suffices to check that the image under $\Theta^\lambda$ of a converging sequence in ${\rm Im}(\Psi)$ is still convergent.  This can be checked locally, by passing to $\underline{u}$-expansions, in which case the statement follows from Remark \ref{theta} . \end{proof}

\begin{cor}\label{Dscalar}
For each sum-symmetric weight $\lambda$ of $\Levi$, 
 the operator
$\Theta^\lambda: V^N\rightarrow V^N$
satisfies the equality  
\[\Theta^\lambda\circ \Phi_\kappa = \Phi_{\lambda+\kappa}\circ D_{\kappa}^{\lambda}\] for each dominant weight $\kappa$ of $\Levi$. 
\end{cor}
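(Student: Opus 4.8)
\textbf{Proof plan for Corollary \ref{Dscalar}.}
The strategy is to reduce the asserted identity $\Theta^\lambda\circ\Phi_\kappa=\Phi_{\lambda+\kappa}\circ D^\lambda_\kappa$ to the corresponding statement for the OMOL-operators $\ud^\lambda_{\kappa'}$ already packaged in Theorem \ref{THETA}, using the factorization $\varPhi_\kappa=\Psi\circ H^0(\cS,\pi^\kappa)$ (the definition of $\varPhi_\kappa$) together with the decompositions of $D^\lambda_{\kappa,x_0}$ into the operators $\ud^\lambda_{\kappa'}$ provided by Proposition \ref{DKloc}. Concretely, since by Proposition \ref{prop-st} (the $u$-expansion principle) an element of $\Vmu$ is determined by its localizations, and every map involved is $\W$-linear and compatible with localization at points $x\in\Ig(\W)$, it suffices to verify the identity after applying $\loc_x$ for all such $x$ lying above a fixed $\mu$-ordinary point $x_0$.

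First I would write down, for a fixed $x_0\in\cS(\W)$ with lift $x\in\Ig(\W)$, the chain
\[
\loc_x\circ\Phi_\kappa=\loc_x\circ\Psi\circ H^0(\cS,\pi^\kappa)=\sum_{\kappa'\in\M_\kappa}\loc_x\circ\Psi_{\kappa'}\circ H^0(\cS,\pi^{\kappa,\kappa'}),
\]
using the canonical decomposition $\pi^\kappa=\oplus_{\kappa'\in\M_\kappa}\pi^{\kappa,\kappa'}$ from Proposition \ref{grade} and Proposition \ref{localsplit}(2). Next I would apply Theorem \ref{THETA}, namely $\Theta^\lambda\circ\Psi_{\kappa'}=\Psi_{\lambda+\kappa'}\circ\ud^\lambda_{\kappa'}$ together with property (2), $\loc_x\circ\Theta^\lambda=\theta^\lambda\circ\loc_x$, to obtain
\[
\loc_x\circ\Theta^\lambda\circ\Phi_\kappa=\sum_{\kappa'\in\M_\kappa}\loc_x\circ\Psi_{\lambda+\kappa'}\circ\ud^\lambda_{\kappa'}\circ H^0(\cS,\pi^{\kappa,\kappa'}).
\]
Then I would invoke Proposition \ref{DKloc}(2) (and Corollary \ref{DKpi}), which identifies the localized operator $D^\lambda_{\kappa,x_0}$ with $\bigoplus_{\kappa'\in\M_\kappa}\ud^\lambda_{\kappa',x_0}$ once $\lambda$ is sum-symmetric, so that precomposing by the splitting $\pi^{\kappa,\kappa'}$ matches term-by-term with $\loc_x\circ\Phi_{\lambda+\kappa}\circ D^\lambda_\kappa$ expanded the same way over $\M_{\lambda+\kappa}$. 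Matching the two sides amounts to checking that $\ud^\lambda_{\kappa'}\circ\pi^{\kappa,\kappa'}$ and $\pi^{\lambda+\kappa,\lambda+\kappa'}\circ D^\lambda_\kappa$ agree after localization, which is exactly the content of Proposition \ref{DisD} (giving $D=\ud$ on the Igusa tower) combined with the compatibility of Schur projectors with the parabolic restriction recorded in the Remark after Section \ref{LRrule}.

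The main obstacle I anticipate is bookkeeping the multiplicities: the multi-sets $\M_\kappa$, $\M_\lambda$, $\M_{\lambda+\kappa}$ need not behave simply under addition of weights, and one must be careful that $\varPhi_{\lambda+\kappa}$ is built from $\Psi_{\kappa''}$ with $\kappa''\in\M_{\lambda+\kappa}$ while $\Theta^\lambda\circ\varPhi_\kappa$ naturally produces terms indexed by $\kappa'+\lambda$ with $\kappa'\in\M_\kappa$; one needs the fact (following from the Littlewood--Richardson rule and the factorization $\pi_{\kappa\kappa'}=\pi_\kappa\otimes\pi_{\kappa'}$ from \cite[Lemma 2.4.6]{EFMV}, reproduced in Section \ref{autweight}) that for $\lambda$ sum-symmetric the relevant constituents $\kappa'+\lambda$ do lie in $\M_{\lambda+\kappa}$ with matching multiplicity, so the sums genuinely coincide. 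Once that combinatorial compatibility is in hand the rest is the routine unwinding sketched above, and since everything is verified after passing to $u$-expansions, Proposition \ref{prop-st} lets us conclude the operator identity on all of $\Vmu^N$.
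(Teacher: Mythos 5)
Your proposal correctly identifies the two key ingredients the paper uses — the defining property $\Theta^\lambda\circ\Psi_{\kappa'}=\Psi_{\lambda+\kappa'}\circ\ud^\lambda_{\kappa'}$ from Theorem \ref{THETA}, and the interchange $\ud^\lambda_\kappa\circ\pi^\kappa=\pi^{\kappa+\lambda}\circ D^\lambda_\kappa$ from Corollary \ref{DKpi} — but your route through local expansions introduces a genuine misstep. You write
\[
\loc_x\circ\Phi_\kappa=\loc_x\circ\Psi\circ H^0(\cS,\pi^\kappa)=\sum_{\kappa'\in\M_\kappa}\loc_x\circ\Psi_{\kappa'}\circ H^0(\cS,\pi^{\kappa,\kappa'}),
\]
justified by a claimed ``decomposition $\pi^\kappa=\oplus_{\kappa'\in\M_\kappa}\pi^{\kappa,\kappa'}$.'' This is not what Proposition \ref{grade}(3) or Proposition \ref{localsplit}(2) give. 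The global map $\varPhi_\kappa$ (the $\Phi_\kappa$ of the corollary) is, by definition, $\Psi_\kappa\circ H^0(\cS,\pi^\kappa)$, and $\pi^\kappa:\omega^\kappa\twoheadrightarrow\uo^\kappa$ is a \emph{single} surjection onto the $\kappa$-component of $\gr(\omega^\kappa)$, not a direct sum of projections. The remark right before Section \ref{cong-section} states precisely this: $\loc_x\circ\varPhi_\kappa=\Phi^{\kappa,\kappa}_x$, i.e.\ only the ``pure'' term $\kappa'=\kappa$, not the full sum $\Phi^\kappa_x=\sum_{\kappa'\in\M_\kappa}\Phi^{\kappa,\kappa'}_x$. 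Once you adopt the incorrect sum, you then have to worry about matching multi-sets $\M_\kappa$, $\M_\lambda$, $\M_{\lambda+\kappa}$ term by term, which is exactly the ``main obstacle'' you flag — but that obstacle is a consequence of the misstep, not an intrinsic difficulty.

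The paper's proof is much shorter and avoids any of this bookkeeping and any localization: unwind $\varPhi_\kappa=\Psi_\kappa\circ H^0(\cS,\pi^\kappa)$, apply Theorem \ref{THETA} with the weight $\kappa$ itself (viewed as a dominant weight of $\levi$) to get $\Theta^\lambda\circ\Psi_\kappa=\Psi_{\lambda+\kappa}\circ\ud^\lambda_\kappa$, and then use Corollary \ref{DKpi} to commute $\ud^\lambda_\kappa$ past $\pi^\kappa$, giving $\Psi_{\lambda+\kappa}\circ H^0(\cS,\pi^{\kappa+\lambda})\circ D^\lambda_\kappa=\varPhi_{\lambda+\kappa}\circ D^\lambda_\kappa$. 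Note that Corollary \ref{DKpi} already packages the multiplicity compatibility you were concerned about; you do not need to re-derive it. If you remove the erroneous expansion of $\Phi_\kappa$ and simply chase the single component $\kappa'=\kappa$, your argument collapses to the paper's, and the $u$-expansion principle (Proposition \ref{prop-st}) is not actually needed.
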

\begin{proof}
For each dominant weight $\kappa$ of $\Levi$, the statement follows  from Corollary \ref{DKpi} and the equalities $\Theta^\lambda\circ \Psi_{\kappa'} = \Psi_{\lambda+\kappa'}\circ \ud_{\kappa'}^{\lambda}$ for all $\kappa'\in\M_\kappa$.
\end{proof}

\subsubsection{Congruences among $p$-adic differential operators on $p$-adic automorphic forms OMOIT, via $u$-expansions}
By similarity with the theory in \cite{EFMV}, one may expect congruences among operators $\Theta^\lambda$ of congruent weights, at least under some mild/harmless assumption on the weights.
In this section, we prove that this is indeed the case under some strong restrictions on the weights (see Definition \ref{simple}). Yet, we have no reason to believe them necessary, and we have hope to improve on them in the future.

In a few cases (see Remark \ref{goodies}), 
e.g., when $p$ splits completely in the reflex field $E$, 
our assumptions reduce to the milder ones introduced in \cite{EFMV}. 
Note that in \cite{EFMV} $p$ splits completely in the field $F$, which implies, but is not equivalent to, $p$ splits completely in $E$.

\begin{definition}\label{simple}
Let $\lambda$ be a dominant weight of $\levi$, and write $\lambda=(\lambda(\co))_{\co\in\mathfrak O}$, with $\lambda(\co)=\left(\lambda(\co)_{s_\co},\dots,\lambda(\co)_1\right)$.

We call $\lambda$ {\em simple} if it is symmetric and if, for each orbit $\co$, it satisfies the following conditions:
\begin{enumerate}
\item If there exists $\tau\in\co$ satisfying $\cf_\co(\tau)\in \{0,n\}$, then $\lambda(\co)=(0,\dots, 0).$ 
\item If $\cf_\co(\tau)\neq {0,n}$ for all $\tau\in\co$, then $\lambda(\co)_i=(0,\dots, 0)$, for all $i=1,\dots, s_\co-1$. \label{simple2}
\end{enumerate}
\end{definition}

\begin{rmk} \label{manysimple}
There exist (infinitely many) non-zero simple weights if and only if there exists an orbit $\co\in\mathfrak O$ such that 
$\cf(\tau)\neq 0,n$ for all $\tau\in\co$. 
\end{rmk}

\begin{rmk}
If $\lambda$ is simple, of depth $e$, then it is a sum-symmetric (dominant) weight of $\Levi$. Moreover,  the irreducible $\W$-representation $\varrho_\lambda$ arises as a quotient of the direct summand $\gr^{s, 0}_\co(\CL^2)^{\otimes e}$ of $(\CL^2)^{\otimes e}$. 
\end{rmk}

\begin{rmk}\label{goodies}
If $p$ splits completely in the reflex field $E$, then the $\mu$-ordinary polygon is ordinary and all symmetric weights are simple.  

More generally, all symmetric weights are simple if, for each orbit $\tau\in\T$, the $\mu$-ordinary Newton polygon $\nu_{\co_\tau}(n,\cf)$  is either ordinary (i.e., its only slopes are $0$ and $e$) or isoclinic (i.e., it has only one slope).
\end{rmk}

It follows from the definitions, together with Remark \ref{MULT}, that for $\lambda$ simple, the operator $\theta^\lambda:\cR\to\cR$ can be computed as in \cite[Lemma 5.2.2]{EFMV}, in terms of operators of  $(1+u)\partial_u:\W[\![\underline{u}]\!]\to\W[\![\underline{u}]\!]$, where $u$ ranges among the Serre--Tate coordinates corresponding to the Barsotti--Tate groups $\X^\can(\co,1,\cf'_{0,e})$ in the cascades, for $\co$ as in Part \eqref{simple2} of Definition  \ref{simple}.

We deduce the following analogue of \cite[Proposition 5.2.4]{EFMV}. The argument in \cite{EFMV}  applies immediately to our setting, under the further assumptions that the two symmetric weights $\lambda,\lambda' $  are simple. (In {\it loc.\ cit.}\ the weights are denoted by $\kappa,\kappa'$.)

\begin{prop}\label{thetacong}

Let $\lambda,\lambda'$ be two simple weights, and let $m\geq 1$ be an integer.
Assume  \[\lambda\equiv\lambda'\mod p^m(p-1)\] in $\Z^g$. If, additionally, \begin{itemize}
\item $\min(\lambda(\tau)_i-\lambda(\tau)_{i+1},\lambda'(\tau)_i-\lambda'(\tau)_{i-1})>m$ for all $\tau\in\T$ and $1\leq i < a^+_{\tau}$ for which 
$\lambda(\tau)_i-\lambda(\tau)_{i+1} \neq \lambda'(\tau)_i-\lambda'(\tau)_{i-1}$, and
\item  $\min(\lambda(\tau)_{a^+_\tau},\lambda'(\tau)_{a^+_\tau})>m$ for all $\tau\in\T$  for which 
$\lambda(\tau)_{a^+_\tau}\neq \lambda'(\tau)_{a^+_\tau}$,
\end{itemize} 
 then $\theta^\lambda \equiv \theta^{\lambda'}\mod p^{m+1}$.
\end{prop}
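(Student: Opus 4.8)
The plan is to reduce the statement to a computation with the operator $\theta^\lambda$ expressed in Serre--Tate coordinates, following the template of \cite[Proposition 5.2.4]{EFMV} but carefully checking that the simplicity hypothesis makes that template applicable. First I would invoke the description of $\theta^\lambda$ noted in the text just before the statement: for $\lambda$ simple of depth $e$, the operator $\theta^\lambda:\cR\to\cR$ factors as $(\tlcan^\lambda\otimes\mathrm{id})\circ\Xi^e$, and by Remark \ref{MULT} the only Barsotti--Tate groups in the relevant cascades that contribute (because $\varrho_\lambda$ arises as a quotient of $\gr^{s,0}_\co(\CL^2)^{\otimes e}$ for the orbits $\co$ with $\cf(\tau)\neq 0,n$ for all $\tau\in\co$) are the formal multiplicative groups $\X^\can(\co,1,\cf'_{0,e})$. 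Consequently $\theta^\lambda$ is built entirely from the commuting operators $(1+u)\partial_u$ acting on $\W[\![\uu]\!]$, where $u$ ranges over the Serre--Tate coordinates of those $\hat{\bG}_m$-factors; this is exactly the shape of the operator appearing in \cite[Lemma 5.2.2]{EFMV}.

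Next I would write out $\theta^\lambda$ explicitly as a product $\prod_{\tau,u}\bigl((1+u)\partial_u\bigr)^{c_{\tau,u}(\lambda)}$, where the exponents $c_{\tau,u}(\lambda)$ are the appropriate partial sums of the entries of $\lambda(\tau)$ (precisely as in \emph{loc.\ cit.}, since simplicity forces all the ``intermediate'' entries $\lambda(\co)_i$, $1\le i\le s_\co-1$, to vanish, so the only surviving exponents are built from $\lambda(\tau)_{a^+_\tau}$ and the successive differences $\lambda(\tau)_i-\lambda(\tau)_{i+1}$). The two bulleted hypotheses in the statement are exactly the conditions that each exponent $c_{\tau,u}(\lambda)$ and $c_{\tau,u}(\lambda')$ that differ between $\lambda$ and $\lambda'$ is $>m$, while $\lambda\equiv\lambda'\bmod p^m(p-1)$ forces $c_{\tau,u}(\lambda)\equiv c_{\tau,u}(\lambda')\bmod p^m(p-1)$ for all of them. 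Then I would apply the elementary congruence for the operator $(1+u)\partial_u$ on $\W[\![u]\!]$: if $a\equiv b\bmod p^m(p-1)$ and $\min(a,b)>m$, then $\bigl((1+u)\partial_u\bigr)^a\equiv\bigl((1+u)\partial_u\bigr)^b\bmod p^{m+1}$ as operators on $p$-adic power series (this is the Fermat--Euler type congruence underlying Serre's $p$-adic modular forms, used in \cite[Section 5.2]{EFMV}). Multiplying these congruences over the finitely many factors $\tau,u$ and tracking denominators (the normalizing factorials hidden in $\tlcan^\lambda$, which are $p$-adic units under the depth/weight positivity here) yields $\theta^\lambda\equiv\theta^{\lambda'}\bmod p^{m+1}$.

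The main obstacle, and the place where the argument needs the most care beyond quoting \cite{EFMV}, is verifying that in the $\mu$-ordinary (non-ordinary) setting no Lubin--Tate factors $\X^\can(\co,1,\cf'_{a,a+1})$ (Remark \ref{LT}) enter the formula for $\theta^\lambda$ when $\lambda$ is simple. This is precisely what simplicity is designed to guarantee: condition (2) in Definition \ref{simple} kills the weight components that would pair against the $\gr^{i,j}_\co(\CL^2)$ with $0<j<i<s_\co$ (equivalently against non-multiplicative pieces of the cascade), leaving only the $\gr^{s_\co,0}_\co$ piece, which by Remark \ref{MULT} is a sum of copies of $\hat{\bG}_m$. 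Once that reduction is in place, the operator is literally a product of $(1+u)\partial_u$'s and the congruence is the classical one; the delicate Lubin--Tate group-law computations flagged in Section \ref{ST-mu-LT} are entirely avoided. I would also double-check compatibility across distinct orbits and the $\co=\co^*$ case — but since simple weights have $\lambda(\co)=0$ whenever $0$ or $e$ is \emph{not} a slope, and otherwise land in the multiplicative part, the orbit-by-orbit analysis decouples cleanly and the $\lambda$-$sym$ identifications do not interfere.
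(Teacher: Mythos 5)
Your proposal is correct and follows essentially the same route the paper takes: reduce via Remark \ref{MULT} and the simplicity hypothesis to a product of $(1+u)\partial_u$ operators on the $\hat{\bG}_m$-factors of the cascade (so that no Lubin--Tate pieces from Remark \ref{LT} intervene), then invoke the Fermat--Euler congruence argument of \cite[Lemma 5.2.2 and Proposition 5.2.4]{EFMV}. The paper's own proof is just the two sentences in the paragraph preceding the statement, saying exactly that the argument of \emph{loc.\ cit.}\ applies once simplicity is imposed; your write-up fills in the same chain of reasoning with more detail, including the observation that $\varrho_\lambda$ sits in $\gr^{s,0}_\co(\CL^2)^{\otimes e}$, which is precisely the remark the paper makes after Definition \ref{simple}.
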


Finally, from the above proposition and Theorem \ref{THETA} combined, we deduce the following analogue of \cite[Theorem 5.2.6]{EFMV}.
  
\begin{thm}\label{THETACONG}
Let $\lambda,\lambda'$ be two simple weights, and let $m\geq 1$ be an integer.
Assume  \[\lambda\equiv\lambda'\mod p^m(p-1)\] in $\Z^g$. If, additionally, both 
\begin{enumerate}
\item{$\min(\lambda(\tau)_i-\lambda(\tau)_{i+1},\lambda'(\tau)_i-\lambda'(\tau)_{i-1})>m$ for all $\tau\in\T$ and $1\leq i < a^+_{\tau}$ for which 
$\lambda(\tau)_i-\lambda(\tau)_{i+1} \neq \lambda'(\tau)_i-\lambda'(\tau)_{i-1}$, and}\label{itemi}
\item{$\min(\lambda(\tau)_{a^+_\tau},\lambda'(\tau)_{a^+_\tau})>m$ for all $\tau\in\T$  for which 
$\lambda(\tau)_{a^+_\tau}\neq \lambda'(\tau)_{a^+_\tau},$}\label{itemii}
\end{enumerate} 
 then $\Theta^\lambda \equiv \Theta^{\lambda'}\mod p^{m+1}$.
\end{thm}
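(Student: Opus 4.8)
The plan is to reduce Theorem \ref{THETACONG} to Proposition \ref{thetacong} by the same density-and-localization argument used for Theorem \ref{THETA}, exactly as \cite[Theorem 5.2.6]{EFMV} is deduced from \cite[Proposition 5.2.4]{EFMV}. First I would observe that, since $\lambda$ and $\lambda'$ are both simple, they are in particular sum-symmetric dominant weights of $\Levi$, so the operators $\Theta^\lambda$ and $\Theta^{\lambda'}$ are both defined by Theorem \ref{THETA}. To show $\Theta^\lambda \equiv \Theta^{\lambda'} \bmod p^{m+1}$ as operators on $\Vmu^N$, it suffices to check the congruence on a $p$-adically dense subspace, namely $\mathrm{Im}(\Psi)$ (Proposition \ref{psi}(2)), and moreover it suffices to check it after applying $\loc_x$ for a single $x \in \Ig(\W)$ lying above each connected component — here is where the transitivity assumption on $\torus(\Z_p)$ (Remark \ref{trans}) and the $u$-expansion principle (Proposition \ref{prop-st}) enter: two elements of $\Vmu$ agreeing modulo $p^{m+1}$ after localization at enough points, together with the torus action, must agree modulo $p^{m+1}$.

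Next I would use property (2) of Theorem \ref{THETA}, namely $\loc_x \circ \Theta^\lambda = \theta^\lambda \circ \loc_x$, to translate the desired congruence into the statement $\theta^\lambda \equiv \theta^{\lambda'} \bmod p^{m+1}$ as operators $\cR \to \cR$ on the complete local ring $\cR = \CO_{\Ig,x}^\wedge$. But this is precisely the content of Proposition \ref{thetacong}, whose hypotheses are exactly items \eqref{itemi} and \eqref{itemii} together with $\lambda \equiv \lambda' \bmod p^m(p-1)$. So once the reduction is in place, the theorem follows immediately. I would also spell out, for the reader, that on $\mathrm{Im}(\Psi)$ one has $\loc_x \circ \Theta^\lambda \circ \Psi_\kappa = \theta^\lambda \circ \loc_x \circ \Psi_\kappa = \loc_x \circ \Psi_{\lambda+\kappa} \circ \ud_\kappa^\lambda$ (Remark \ref{theta}), so that the comparison of $\Theta^\lambda$ and $\Theta^{\lambda'}$ genuinely reduces to comparing $\theta^\lambda$ and $\theta^{\lambda'}$ on the image of the $u$-expansion maps.

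The main obstacle is not in the reduction itself — which is formal given the machinery already assembled — but rather lies upstream, in Proposition \ref{thetacong}, and ultimately in the need for the \emph{simplicity} hypothesis on $\lambda, \lambda'$. The point is that the computation of $\theta^\lambda$ on $u$-expansions runs through Remark \ref{MULT}: for a simple weight, the relevant Barsotti--Tate groups in Moonen's cascade are sums of copies of the formal multiplicative group $\hat{\bG}_m$, so that $\theta^\lambda$ becomes a monomial (with coefficients involving the $\ell^\lambda$-functional) in the operators $(1+u)\partial_u$, and the classical congruence $(1+u)\partial_u \equiv$ stabilizes modulo powers of $p$ — as in \cite[Lemma 5.2.2]{EFMV} — applies verbatim. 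For non-simple weights one would instead encounter Lubin--Tate formal group laws (Remark \ref{LT}), where $d\log_G$ is not the logarithmic derivative of a coordinate and the elementary congruence argument breaks down; this is the technical difficulty flagged in Section \ref{ST-mu-LT} and in the introduction. So in the write-up I would keep the proof of the theorem itself short — essentially a citation of the argument of \cite[Theorem 5.2.6]{EFMV} combined with Theorem \ref{THETA} and Proposition \ref{thetacong} — and make clear that all the genuine work has been pushed into Proposition \ref{thetacong} and the simplicity restriction.
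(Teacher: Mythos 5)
Your proposal is correct and takes essentially the same approach as the paper: Theorem~\ref{THETACONG} is deduced by combining Proposition~\ref{thetacong} (the congruence $\theta^\lambda\equiv\theta^{\lambda'}\bmod p^{m+1}$ on local expansions) with Theorem~\ref{THETA} (which intertwines $\Theta^\lambda$ with $\theta^\lambda$ via $\loc_x$), together with the density of $\mathrm{Im}(\Psi)$ and the $\uu$-expansion principle, exactly as you outline. Your additional remarks on where simplicity enters (Remarks~\ref{MULT} and~\ref{LT}) accurately describe why the hypothesis is needed upstream, but do not change the logical structure.
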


\begin{defn}
A character $T(\Z_p)\to\Z_p^*$ is called a {\em (simple) $p$-adic character} if it can be expressed as the $p$-adic limit of a sequence of characters corresponding to (simple) classical weights.
\end{defn}

Proposition \ref{thetacong} implies the existence of differential operators $\theta^\chi$ on $\cR$ for all simple $p$-adic characters $\chi$, arising by interpolation of the operators $\theta^\lambda$, for simple weights $\lambda$ of $\levi$.  (Take a sequence of $p$-adically converging simple weights $\lambda_i$ with $|\lambda_i|_\infty\rightarrow \infty$ so that \eqref{itemi} and \eqref{itemii} from Theorem \ref{THETACONG} are satisfied.)

Similarly, Theorem \ref{THETACONG} implies the following result. 

\begin{cor}

For each simple $p$-adic character $\chi$, there exists a $p$-adic differential operator
  \begin{align*}
\Theta^\chi: V^N\rightarrow V^N
\end{align*}
interpolating the $p$-adic differential operators $\Theta^\lambda$.  That is, if $\lambda_i\rightarrow \chi$ $p$-adically and $|\lambda_i|_\infty\rightarrow\infty$ as $i\rightarrow\infty$, then $\Theta^\chi(f) =\lim_i\Theta^{\lambda_i}(f)$.
The $p$-adic differential operator $\Theta^\chi$ satisfies the following properties:
\begin{enumerate}
\item For all $p$-adic characters $\chi'$: $\Theta^\chi (V^N[\chi'])\subseteq V^N[\chi\cdot\chi']$.
\item For all $x\in\Ig(\W)$: $\loc_x\circ \Theta^\chi=\theta^\chi\circ \loc_x$.
\end{enumerate}
\end{cor}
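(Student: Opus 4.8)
The plan is to deduce this corollary directly from Theorem~\ref{THETACONG} together with the construction of $\Theta^\lambda$ in Theorem~\ref{THETA}, by a standard $p$-adic interpolation argument. First, recall that a simple $p$-adic character $\chi$ is by definition the $p$-adic limit of $\Z_p$-points of characters attached to simple classical weights $\lambda_i$; by Remark~\ref{manysimple}, such $\chi$ exist precisely in the nontrivial cases, and we may moreover arrange, after passing to a subsequence, that the $\lambda_i$ converge $p$-adically while $|\lambda_i|_\infty\to\infty$, so that for any fixed $m$ the coprimality-type hypotheses \eqref{itemi} and \eqref{itemii} of Theorem~\ref{THETACONG} are eventually satisfied for all pairs $\lambda_i,\lambda_j$ with $i,j$ large (since the relevant differences $\lambda(\tau)_k-\lambda(\tau)_{k+1}$ and the entries $\lambda(\tau)_{a^+_\tau}$ grow without bound along such a sequence while $\lambda_i\equiv\lambda_j\bmod p^{m}(p-1)$ for $i,j$ large). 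This is the key point: the growth condition $|\lambda_i|_\infty\to\infty$ forces the ``denominator'' conditions in Theorem~\ref{THETACONG} to hold automatically in the tail, so the sequence $\Theta^{\lambda_i}$ becomes Cauchy in the $p$-adic topology on operators $\Vmu^N\to\Vmu^N$.

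Next I would verify that the limit is well-defined and independent of the chosen approximating sequence. Given two simple-weight sequences $\lambda_i\to\chi$ and $\lambda'_i\to\chi$ both with norms tending to infinity, interleaving them produces a single sequence still converging to $\chi$ with norms $\to\infty$, so by the Cauchy property just established $\lim_i\Theta^{\lambda_i}(f)=\lim_i\Theta^{\lambda'_i}(f)$ for every $f\in\Vmu^N$. Since $\Vmu^N$ is $p$-adically complete (it is an inverse limit of the $p$-torsion modules $V_{n,m}$), this limit $\Theta^\chi(f):=\lim_i\Theta^{\lambda_i}(f)$ exists and defines a $\W$-linear (indeed continuous) operator $\Theta^\chi\colon\Vmu^N\to\Vmu^N$. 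Likewise one defines $\theta^\chi$ on $\cR$ as the limit of the $\theta^{\lambda_i}$, whose existence is guaranteed by Proposition~\ref{thetacong} (already invoked in the excerpt preceding the corollary).

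For the two listed properties: property~(1), that $\Theta^\chi(\Vmu^N[\chi])\subseteq\Vmu^N[\chi\cdot\chi']$, follows by taking limits in the analogous statement of Theorem~\ref{THETA}(1), $\Theta^{\lambda_i}(\Vmu[\kappa_i])\subseteq\Vmu[\lambda_i+\kappa_i]$, once one observes that the torus $\torus(\Z_p)$-eigenspace decomposition is compatible with $p$-adic limits: if $f\in\Vmu^N[\chi]$ is approximated by forms $f_i\in\Vmu[\kappa_i]$ with $\kappa_i\to\chi$, then $\Theta^{\lambda_i}(f_i)\in\Vmu[\lambda_i+\kappa_i]$ and passing to the limit places $\Theta^\chi(f)$ in the $\chi\cdot\chi'$-eigenspace (after relabeling $\chi'=\lim\lambda_i$). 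Property~(2), $\loc_x\circ\Theta^\chi=\theta^\chi\circ\loc_x$ for $x\in\Ig(\W)$, is immediate from Theorem~\ref{THETA}(2), $\loc_x\circ\Theta^{\lambda_i}=\theta^{\lambda_i}\circ\loc_x$, by continuity of $\loc_x$ and passage to the limit on both sides.

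The main obstacle — though it is really handled upstream — is ensuring that for a given precision $p^{m+1}$ the hypotheses of Theorem~\ref{THETACONG} hold for cofinitely many pairs in the approximating sequence; this is exactly where the requirement $|\lambda_i|_\infty\to\infty$ is used, and where the restriction to \emph{simple} weights (Definition~\ref{simple}) is essential, since only for simple weights do we have the congruence $\Theta^\lambda\equiv\Theta^{\lambda'}\bmod p^{m+1}$ at our disposal. Once that is in hand, the rest of the corollary is a formal consequence of completeness of $\Vmu^N$ and continuity of the maps involved, with no further computation required.
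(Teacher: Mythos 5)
Your argument follows essentially the same route the paper takes (the paper's proof is the one-line remark preceding the corollary plus the parenthetical about choosing $\lambda_i$ with $|\lambda_i|_\infty\to\infty$): use Theorem~\ref{THETACONG} to make $\Theta^{\lambda_i}$ a $p$-adic Cauchy sequence of operators, invoke $p$-adic completeness of $\Vmu^N$ to take the limit, and pass to limits in Theorem~\ref{THETA} for the two properties. One small imprecision: $|\lambda_i|_\infty\to\infty$ alone does not force every difference $\lambda(\tau)_k-\lambda(\tau)_{k+1}$ or every entry $\lambda(\tau)_{a_\tau^+}$ to grow, so conditions~\eqref{itemi} and~\eqref{itemii} of Theorem~\ref{THETACONG} are not automatic but must be \emph{arranged} by the choice of approximating sequence, exactly as the paper's parenthetical instructs.
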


\section{$p$-adic Families of Automorphic Forms}\label{new-families}

In this section, we build on the material from the previous sections to construct $p$-adic families of automorphic forms.  As an application of the differential operators from the prior sections, we obtain the following result:
\begin{thm}\label{thm-rmk2}
Suppose there exists an orbit $\co\in {\mathfrak O}$ such that $\cf(\tau)\neq 0,n$ for all $\tau \in\co$ (see Remark \ref{manysimple}). Let $f$ be a $p$-adic
automorphic form of weight $\kappa$, and let $\{\lambda_n\}_{n\in{\mathbb N}}$ a sequence of simple weights that converges $p$-adically, and satisfies the conditions of Theorem \ref{THETACONG}.   Then the automorphic forms $\Theta^{\lambda_n}(f)$ converge to a $p$-adic form in $V^N[\kappa\cdot\chi]$, for $\chi:=\lim_n \lambda_n$. 
\end{thm}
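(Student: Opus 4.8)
The plan is to deduce Theorem \ref{thm-rmk2} by combining the congruence statement of Theorem \ref{THETACONG} with the completeness of the space $\Vmu^N$ and the compatibility of $\Theta^\lambda$ with weights. First I would recall that, by Theorem \ref{THETA}, each $\Theta^{\lambda_n}$ maps $\Vmu^N$ to $\Vmu^N$ and sends $\Vmu^N[\kappa]$ into $\Vmu^N[\kappa+\lambda_n]$; since $f$ lies in $\Vmu^N[\kappa]$ (or, for a general $p$-adic form, can be decomposed/approximated by its weight components, using that the image of $\Psi$ is $p$-adically dense by Proposition \ref{psi}), we get $\Theta^{\lambda_n}(f)\in\Vmu^N[\kappa+\lambda_n]$. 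The heart of the argument is that $\{\Theta^{\lambda_n}(f)\}_n$ is $p$-adically Cauchy: given $m\geq 1$, by hypothesis the sequence $\{\lambda_n\}$ converges $p$-adically, so for $n,n'$ large we have $\lambda_n\equiv\lambda_{n'}\bmod p^{m}(p-1)$, and the additional gap conditions \eqref{itemi}, \eqref{itemii} of Theorem \ref{THETACONG} are assumed to hold along the sequence; hence $\Theta^{\lambda_n}\equiv\Theta^{\lambda_{n'}}\bmod p^{m+1}$ as operators on $\Vmu^N$, and in particular $\Theta^{\lambda_n}(f)\equiv\Theta^{\lambda_{n'}}(f)\bmod p^{m+1}$.

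Next I would invoke $p$-adic completeness of $\Vmu:=\varprojlim_m\varinjlim_n V_{n,m}$ (and the fact that $\Vmu^N$ is a closed subspace, being defined by the $\nilp(\ZZ_p)$-invariance condition, a continuous constraint) to conclude that the Cauchy sequence $\Theta^{\lambda_n}(f)$ converges to a limit $g\in\Vmu^N$. It remains to identify the weight of $g$. Write $\chi:=\lim_n\lambda_n$, a $p$-adic character of $\torus(\ZZ_p)$; I would check that $g\in\Vmu^N[\kappa\cdot\chi]$ by passing to the action of $\torus(\ZZ_p)$: for $t\in\torus(\ZZ_p)$ and each $n$, $t\cdot\Theta^{\lambda_n}(f) = (\kappa+\lambda_n)(t)\,\Theta^{\lambda_n}(f)$, and taking the limit (using continuity of the $\torus(\ZZ_p)$-action on $\Vmu$ and continuity of $t\mapsto\lambda_n(t)\to\chi(t)$) gives $t\cdot g = (\kappa\cdot\chi)(t)\,g$, as desired. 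Equivalently, one can observe that $g=\Theta^\chi(f)$ for the interpolated operator $\Theta^\chi$ constructed in the corollary preceding this section, whose defining property already records both the convergence and the weight.

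The main obstacle, and the only place requiring care, is the passage from the operator congruence $\Theta^{\lambda_n}\equiv\Theta^{\lambda_{n'}}\bmod p^{m+1}$ to a genuine statement about Cauchyness \emph{in the topology of} $\Vmu$: one must be careful that the congruences are uniform in the level $(n,m)$ of the Igusa tower, i.e., that ``$\bmod p^{m+1}$'' is interpreted at a fixed finite level $V_{n,m}$ compatibly with the projective/inductive limits. This is handled exactly as in \cite[Section 5.2]{EFMV}: the congruences of Proposition \ref{thetacong} and Theorem \ref{THETACONG} are proved via the $\uu$-expansion maps $\loc_x$ and $\theta^\lambda$ on the complete local rings $\cR=\CO_{\Ig,x}^\wedge$, which detect elements of $\Vmu$ by Proposition \ref{prop-st} (using the transitivity of $\torus(\ZZ_p)$ on connected components), so the congruences automatically hold at every finite level. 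A secondary, purely bookkeeping point is the reduction from a general $p$-adic form $f$ to one of pure weight $\kappa$: if $f$ is literally taken of weight $\kappa$ this is vacuous, and otherwise one argues on a dense set and uses the uniform continuity of the $\Theta^{\lambda_n}$ together with the congruences to extend. Once these uniformity issues are dispatched, the convergence and weight identification are formal.
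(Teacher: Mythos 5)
Your proof is correct and follows essentially the same route as the paper's one-line argument, which simply cites Theorem~\ref{THETACONG} and Corollary~\ref{compare}; your version usefully expands the completeness and weight-identification steps that the paper leaves implicit. One small remark: the paper's citation of Corollary~\ref{compare} (equivalently Corollary~\ref{DKpi}) is there to justify interpreting $\Theta^{\lambda_n}(f)$ when $f$ is a $p$-adic form over the $\mu$-ordinary locus, i.e.\ a section of $\omega^\kappa$ for $\kappa$ a weight of $\Levi$ --- one first passes through $\varPhi_\kappa$, and Corollary~\ref{compare}/Corollary~\ref{Dscalar} reconcile $\Theta^{\lambda_n}\circ\varPhi_\kappa$ with $\varPhi_{\kappa+\lambda_n}\circ D^{\lambda_n}_\kappa$, so that $\Theta^{\lambda_n}(f)$ genuinely lands in $\Vmu^N[\kappa+\lambda_n]$ as the image of a classical differential-operator output. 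You treat $f$ as already sitting in $\Vmu^N[\kappa]$ (and wave at a density argument for the general case), which gives the convergence claim but elides this compatibility; it would be worth adding a sentence noting that when $f\in H^0(\cS,\omega^\kappa)$, the transfer to $\Vmu^N$ via $\varPhi_\kappa$ and Corollary~\ref{compare} is what makes $\Theta^{\lambda_n}(f)$ meaningful in the first place. Beyond that, the Cauchyness-from-congruences argument, the appeal to completeness of $\Vmu^N$, and the weight identification via continuity of the $\torus(\Z_p)$-action are exactly what is needed and match the intent of the paper's proof.
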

\begin{proof}
 Together, Theorem \ref{THETACONG} and Corollary \ref{compare} imply the automorphic forms $\Theta^{\lambda_n}(f)$ converge to a $p$-adic form in $V^N[\kappa\cdot\chi]$, for $\chi:=\lim_n \lambda_n$. 
\end{proof}

For applications to $p$-adic $L$-functions and Iwasawa theory, it is often convenient to construct $p$-adic measures.  Recall (e.g., from \cite[Section 4.0]{kaCM}) that for $R$ a $p$-adic ring, an $R$-valued $p$-adic measure on a compact, totally disconnected topological space $Y$ is a $\ZZ_p$-linear map $\mu$ from the $\ZZ_p$-algebra $\CC(Y, \ZZ_p)$ of $\ZZ_p$-valued continuous functions on $Y$ to $R$.  It is equivalent to give an $R$-linear map from the $R$-algebra $\CC(Y, R)$ of $R$-valued continuous functions on $Y$ to $R$, since $\CC\left(Y, \ZZ_p\right)\hat\otimes_{\ZZ_p}R\cong \CC\left(Y, R\right).$  Given $\chi\in\CC(Y, R)$, we write $\int_Y\chi d\mu := \mu(\chi)$.

Let 
\begin{align*}
W=\prod_{\left\{\co | \cf_\co(\tau)\neq {0,n}\forall \tau\in\co\right\}}\ZZ_p^\times.
\end{align*}
So the rank of $W$ is the number of components at which a {\it simple} weight (in the sense of Definition \ref{simple}) can be nonzero.

\begin{thm}\label{thm-rmk1}
Let $f$ be a $p$-adic automorphic form OMOIT.
Then there is a $\Vmu$-valued $p$-adic measure $\mu_f$ on $W$ such that
\begin{align*}
\int_{W} \lambda d\mu_f =\Theta^\lambda(f)
\end{align*}
for all simple positive integer weights $\lambda$.

In particular, if $f$ is of weight $\kappa$ and $\{\lambda_n\}_{n\in{\mathbb N}}$ is a sequence of positive weights that converges $p$-adically and satisfies the conditions of Theorem \ref{THETACONG}, then the automorphic forms $\Theta^{\lambda_n}(f)$ converge to a $p$-adic form in $\Vmu[\chi\cdot\kappa]$, for $\chi:=\lim_n \lambda_n$.
\end{thm}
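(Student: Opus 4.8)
\textbf{Proof proposal for Theorem \ref{thm-rmk1}.} The plan is to construct the measure $\mu_f$ by the standard Mahler/Amice interpolation argument, using the operators $\Theta^\lambda$ and the congruences of Theorem \ref{THETACONG} as the source of $p$-adic continuity. First I would recall that $W=\prod_{\co}\Z_p^\times$, where the product runs over the orbits $\co$ with $\cf_\co(\tau)\neq 0,n$ for all $\tau\in\co$; by Remark \ref{manysimple} these are exactly the orbits at which a simple weight can be supported, so a simple positive integer weight $\lambda$ of $\levi$ restricts (via the depth-$e$ normalization and the component structure described in Definition \ref{simple}) to a continuous character $\lambda\colon W\to \Z_p^\times$, and conversely every algebraic character of $W$ of the appropriate parity arises this way. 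The subgroup of $\CC(W,\Z_p)$ spanned by these algebraic characters is $p$-adically dense in $\CC(W,\Z_p)$ (this is the usual density of locally algebraic, or even algebraic, characters in continuous functions on $\prod\Z_p^\times$, e.g. via Mahler expansions after reducing to each $\Z_p^\times$ factor). So it suffices to define $\mu_f(\lambda):=\Theta^\lambda(f)$ on this dense subspace and show the resulting $\Z_p$-linear functional is bounded, i.e. continuous for the sup-norm on the source and the $p$-adic topology on $\Vmu$.

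The key point is boundedness/continuity, and here I would invoke Theorem \ref{THETACONG} directly: if $\lambda,\lambda'$ are simple weights with $\lambda\equiv\lambda'\bmod p^m(p-1)$ and the gap conditions \eqref{itemi}, \eqref{itemii} hold, then $\Theta^\lambda\equiv\Theta^{\lambda'}\bmod p^{m+1}$ as operators on $\Vmu^N$, hence $\Theta^\lambda(f)\equiv\Theta^{\lambda'}(f)\bmod p^{m+1}$. The technical nuisance is that Theorem \ref{THETACONG} requires not just congruence of $\lambda$ and $\lambda'$ but also that the ``gaps'' $\lambda(\tau)_i-\lambda(\tau)_{i+1}$ (and the final entries $\lambda(\tau)_{a^+_\tau}$) be large wherever $\lambda$ and $\lambda'$ disagree on a gap. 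The standard device (cf. \cite[Section 5.2]{EFMV}) to get around this is: given a character $\chi$ of $W$ to interpolate at, one first twists by a fixed highly divisible weight so that all the relevant gaps of the weights in play become $>m$, which is harmless because multiplying $\lambda$ by a sufficiently deep $p$-power weight does not change $\Theta^\lambda$ modulo a controlled power of $p$ — more precisely, one uses the basic congruence $\theta^{\lambda}\equiv\theta^{\lambda'}\bmod p^{m+1}$ of Proposition \ref{thetacong} together with the fact that the map $\lambda\mapsto\theta^\lambda$ on the local ring $\cR$ is built out of the operators $(1+u)\partial_u$ whose $\lambda$-th ``power'' $((1+u)\partial_u)^\lambda$ is manifestly $p$-adically continuous in $\lambda$ on $\Z_p^\times$. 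Combining these, the family $\{\Theta^\lambda(f)\}$ indexed by simple positive integer weights satisfies the Kummer-type congruences characterizing a measure, and hence extends uniquely to a $\Vmu$-valued measure $\mu_f$ on $W$ with $\int_W\lambda\,d\mu_f=\Theta^\lambda(f)$.

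For the ``in particular'' clause, let $\{\lambda_n\}$ be a sequence of simple positive weights converging $p$-adically to $\chi$ with $|\lambda_n|_\infty\to\infty$ and satisfying the hypotheses of Theorem \ref{THETACONG}; then $\lambda_n\to\chi$ in $\CC(W,\Z_p)$, so $\int_W\lambda_n\,d\mu_f\to\int_W\chi\,d\mu_f$ by continuity of the measure, i.e. $\Theta^{\lambda_n}(f)$ converges in $\Vmu$. That the limit lies in $\Vmu^N[\chi\cdot\kappa]$ follows because each $\Theta^{\lambda_n}(f)\in\Vmu^N[\lambda_n\cdot\kappa]$ by property (1) of Theorem \ref{THETA} (the operator shifts the $\torus(\Z_p)$-weight by $\lambda_n$), and the weight spaces are closed, with $\lambda_n\cdot\kappa\to\chi\cdot\kappa$; this is precisely the conclusion already recorded in Theorem \ref{thm-rmk2}, so this part needs no new argument.

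\textbf{Main obstacle.} The genuinely delicate step is establishing the Kummer congruences uniformly — that is, verifying that the gap/divisibility hypotheses in Theorem \ref{THETACONG} can always be arranged (by an initial twist) without losing the interpolation property, and that the restriction of simple weights to characters of $W$ is compatible with this twisting. Everything else (density of algebraic characters, the abstract ``congruences $\Rightarrow$ measure'' principle, continuity of integration, closedness of weight spaces) is routine. I expect the proof to follow \cite[Theorem 5.2.6 and its corollaries]{EFMV} essentially verbatim once the bookkeeping of simple weights versus characters of $W$ is set up, since Proposition \ref{thetacong}, Theorem \ref{THETACONG}, and Theorem \ref{THETA} already isolate all the $\mu$-ordinary-specific input.
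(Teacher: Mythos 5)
Your proposal follows essentially the same route as the paper's own (very terse, two-sentence) proof: the congruence Theorem \ref{THETACONG} is the source of $p$-adic continuity/boundedness, the measure $\mu_f$ is then produced by the abstract Kummer-congruence criterion from the dense family of simple algebraic characters of $W$, and the ``in particular'' clause is convergence of $\Theta^{\lambda_n}(f)$ in a closed weight space (the paper cites Corollary \ref{Dscalar}, you cite Theorem \ref{THETA}(1), both serving the same bookkeeping purpose). The only substantive thing you add is an explicit acknowledgment that the gap hypotheses \eqref{itemi}--\eqref{itemii} of Theorem \ref{THETACONG} must be arranged uniformly via a twisting device, which the paper's proof leaves entirely implicit by deferring to ``the definition of a $p$-adic measure.''
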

\begin{proof}
Theorem \ref{THETACONG} and Corollary \ref{Dscalar} combined imply that  the automorphic forms $\Theta^{\lambda_n}(f)$ converge to a $p$-adic form in $\Vmu[\chi]$, for $\chi:=\lim_n \lambda_n$.   The rest of the statement follows from the definition of a $p$-adic measure.
\end{proof}

For applications to $p$-adic $L$-functions, one often needs to relate certain $p$-adic and $\IC$-valued automorphic forms.  For example, Katz's construction of $p$-adic $L$-functions for CM fields in \cite{kaCM} includes a comparison of values of $p$-adic and $\IC$-valued Hilbert modular forms at certain ordinary Hilbert--Blumenthal abelian varieties, in two distinct ways:
\begin{enumerate}
\item{Equate (modulo periods) the values (at ordinary CM Hilbert--Blumenthal abelian varieties defined over $\Dring$) of a $p$-adic and a $\ci$ Hilbert modular form obtained by applying a $p$-adic differential operator (related to the ones in this paper) and the analogous $\ci$ Maass--Shimura operator to a holomorphic Hilbert modular form defined over $\Dring$.}\label{comparison-kaCM}
\item{Express a $p$-adic automorphic form obtained by applying a $p$-adic differential operator (analogous to the ones in this paper) to an Eisenstein series defined over $\Dring$ as a $p$-adic limit of finite sums of holomorphic (algebraic) Eisenstein series over $\Dring$.}\label{limit-kaCM}
\end{enumerate}
Item \eqref{comparison-kaCM}, in particular, plays a key role in Katz's construction of $p$-adic $L$-functions for CM fields \cite{kaCM}.

\begin{rmk}\label{cor-CMcomparison}
Let $f$ be an algebraic automorphic form arising over $\Dring$.  Then by extension of scalars, we may view $f$ as an automorphic form over $\IC$ or as an automorphic form over $\CO_{\bar{\IQ}_p}$.  Let $\uA$ be a $\mu$-ordinary CM point over $\Dring$, together with a choice of differentials $\omega$ over $\Dring$, and let $c$ be such that $c\omega$ is the canonical basis over the $\mu$-ordinary Igusa tower.  So if $f$ is of weight $\kappa$, $f(\uA, c\omega)=\Omega_{c, \kappa} f(\uA, \omega)$, for some $\Omega_{c, \kappa}$ dependent on $c$ and $\kappa$.

We expect that a similar argument to the one in \cite[Section 5]{kaCM} yields an analogous comparison to \eqref{comparison-kaCM} at $\mu$-ordinary CM points over $\Dring$ (together with a basis of differentials) of $\pi^{\kappa+\lambda} D_\kappa^\lambda(f)$ and $\pi^{\kappa+\lambda}D_{\kappa, \ci}^\lambda(f)$.  
As an illustration of a consequence of this comparison, we provide Corollary \ref{coro-cons}, which interpolates values (modulo periods) at CM points of $\ci$ automorphic forms.
\end{rmk}

\begin{cor}[Corollary to Theorem \ref{thm-rmk1}]\label{coro-cons}
Let $f$ be a weight $\kappa$ algebraic automorphic form arising over $\Dring$, and let $(\uA, \omega)$, $c$, and $\Omega_{c, \kappa}$ be as in Remark \ref{cor-CMcomparison}.  Then
\begin{align*}
\frac{1}{\Omega_{c, \kappa+\lambda}}\int_{W} \lambda d\mu_f =\left(\pi^{\kappa+\lambda} D_{\kappa, \ci}^\lambda(f)\right)(\uA, \omega)
\end{align*}
\end{cor}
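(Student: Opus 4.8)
The plan is to combine the measure $\mu_f$ from Theorem~\ref{thm-rmk1} with the $\mu$-ordinary CM comparison alluded to in Remark~\ref{cor-CMcomparison}. First I would record that, by definition of $\mu_f$, for each simple positive integer weight $\lambda$ we have $\int_W \lambda\, d\mu_f = \Theta^\lambda(f)$, an element of $\Vmu^N[\kappa\cdot\lambda]$. Next I would pass to the marked $\mu$-ordinary Igusa point $x$ attached to the CM datum $(\uA,\omega)$ together with the scalar $c$: concretely, $c\omega$ is the canonical trivialization over the Igusa tower, so the localization $\loc_x$ sends a $p$-adic form of weight $\kappa+\lambda$ to the value of the corresponding trivialized section at $x$, which is exactly the value at $(\uA, c\omega)$. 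By Remark~\ref{cor-CMcomparison} (the relation $f(\uA, c\omega) = \Omega_{c,\kappa} f(\uA,\omega)$, applied in weight $\kappa+\lambda$) we have $\loc_x(\Theta^\lambda(f)) = \Omega_{c,\kappa+\lambda}\cdot \bigl(\pi^{\kappa+\lambda}D_\kappa^\lambda(f)\bigr)(\uA,\omega)$, using Corollary~\ref{Dscalar} to identify $\Theta^\lambda(f)$, restricted through $\Phi_\kappa$, with $\Phi_{\kappa+\lambda}(D_\kappa^\lambda(f))$.

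The second ingredient is the expected analogue of \cite[Section~5]{kaCM}: at a $\mu$-ordinary CM point over $\Dring$, the value of $\pi^{\kappa+\lambda}D_\kappa^\lambda(f)$ (a $p$-adic form) and the value of $\pi^{\kappa+\lambda}D_{\kappa,\ci}^\lambda(f)$ (a $\ci$ form) agree after dividing by the appropriate period, i.e. $\bigl(\pi^{\kappa+\lambda}D_\kappa^\lambda(f)\bigr)(\uA,\omega) = \bigl(\pi^{\kappa+\lambda}D_{\kappa,\ci}^\lambda(f)\bigr)(\uA,\omega)$ under the identifications via $\iota$ and the fixed periods. Granting this, I would simply divide the identity $\int_W \lambda\, d\mu_f = \Theta^\lambda(f)$, localized at $x$, by $\Omega_{c,\kappa+\lambda}$, obtaining
\[
\frac{1}{\Omega_{c,\kappa+\lambda}}\int_W \lambda\, d\mu_f = \bigl(\pi^{\kappa+\lambda}D_{\kappa,\ci}^\lambda(f)\bigr)(\uA,\omega),
\]
which is the claimed formula. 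Since both sides are continuous in $\lambda$ (the left side by the measure property and the $p$-adic continuity of $\lambda\mapsto\Omega_{c,\kappa+\lambda}$ on the relevant congruence class, the right side through the standard interpolation of Maass--Shimura operators), it suffices to verify the equality on the dense set of simple positive integer weights, where both sides are literally the value of an algebraic object.

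The main obstacle is the CM comparison itself: establishing, in the $\mu$-ordinary setting (with possibly empty ordinary locus), the precise analogue of \cite[Section~5]{kaCM} relating $p$-adic and $\ci$ differential operators at CM points. This requires matching the canonical complement $U$ from Proposition~\ref{unitroot} against the Hodge--de~Rham splitting over $\IC$ at the CM point, tracking how the graded pieces $\uo^{\kappa'}$ and the projections $\pi^\kappa$ interact with the $\ci$ theory, and identifying the periods $\Omega_{c,\kappa}$ compatibly in families of weights. The excerpt itself flags this as only expected (Remark~\ref{cor-CMcomparison}), so in the write-up I would state the corollary conditionally on that comparison, mirroring the hedged phrasing already used, and keep the measure-theoretic and localization bookkeeping — which is routine given Theorem~\ref{thm-rmk1}, Corollary~\ref{Dscalar}, and Theorem~\ref{THETA}(2) — as the explicit content of the proof.
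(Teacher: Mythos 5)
Your proof takes essentially the same route as the paper's very terse proof ("This is a consequence of Theorem \ref{thm-rmk1}, combined with Remark \ref{cor-CMcomparison}"), but you correctly fill in the intermediate step the paper elides — namely the use of Corollary \ref{Dscalar} to turn $\Theta^\lambda(f)$ into $\Phi_{\kappa+\lambda}(D_\kappa^\lambda(f))$ before localizing at the CM Igusa point and peeling off the period $\Omega_{c,\kappa+\lambda}$. Your explicit flagging of the CM comparison as the conditional input matches the paper's own hedging in Remark \ref{cor-CMcomparison}, so there is no gap beyond the one the authors already acknowledge.
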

\begin{proof}
This is a consequence of Theorem \ref{thm-rmk1}, combined with Remark \ref{cor-CMcomparison}.
\end{proof}

\begin{rmk}
As noted in Remark \ref{goodies}, when $p$ splits completely in $\reflexfield$, the ordinary locus is always nonempty.  Thus, \cite[Theorem 7.2.3]{EFMV}, which obtains an explicit family of automorphic forms by applying differential operators to a family of Eisenstein series on a unitary group $G$ of signature $(n,n)$ and then restricting to a subgroup $G'$ of $G$, can be extended to the case where $p$ need not split completely in $\cmfield$ but merely splits completely in $\reflexfield$ (replacing the stronger condition that $p$ splits completely in $\cmfield$).  The approach in the proof of \cite[Theorem 7.2.3]{EFMV} uses the existence of an ordinary cusp for $G$ together with the inclusion of the $\mu$-ordinary locus for $G'$ inside the ordinary locus for $G$, which only exists when the ordinary locus for $G'$ is nonempty.  Ideally, we would also like to handle the case where the ordinary locus for $G'$ is empty.  We expect that the analogue in our setting of Hida's ordinary projection ({\it $\mu$-ordinary} or {\it $P$-ordinary} projection, in our case) will help enable such as an extension.
\end{rmk}

\bibliography{muordinarybibAJMstyle}
\end{document}